\renewcommand*\subjclass[2][2000]{%
  \def\@subjclass{#2}%
  \@ifundefined{subjclassname@#1}{%
    \ClassWarning{\@classname}{Unknown edition (#1) of Mathematics
      Subject Classification; using '1991'.}%
  }{%
    \@xp\let\@xp\subjclassname\csname subjclassname@#1\endcsname
  }%
}
\newtheorem*{ThmA}{Theorem A}
\newtheorem*{ThmB}{Theorem B}
\newtheorem*{ThmC}{Theorem C}
\newtheorem*{ThmD}{Theorem D}
\newtheorem*{ThmE}{Theorem E}
\newtheorem*{ThmF}{Theorem F}
\newtheorem*{ThmG}{Theorem G}
\newtheorem*{ThmH}{Theorem H}
\newtheorem*{ThmI}{Theorem I}
\newtheorem*{ThmJ}{Theorem J}
\newtheorem*{ThmK}{Theorem K}
\newtheorem*{ThmL}{Theorem L}
\newtheorem*{ThmM}{Theorem M}
\newtheorem{Thm}{Theorem}[section]
\newtheorem{Cor}[Thm]{Corollary}
\newtheorem{Pro}[Thm]{Proposition}
\theoremstyle{definition}
\newtheorem{Def}[Thm]{Definition}
\theoremstyle{remark}
\newtheorem{Rem}[Thm]{\upshape\bfseries Remark}
\numberwithin{equation}{section}
\newcommand{\ee}{\mathrm{e}}
\theoremstyle{definition}
\def\be{\begin{equation}}
\def\ee{\end{equation}}
\newcommand{\ben}{\begin{enumerate}}
\newcommand{\een}{\end{enumerate}}
\newcommand{\B}{{\mathbb B}}
\newcommand{\C}{{\mathbb C}}
\newcommand{\U}{{\mathbb U}}
\newcommand{\br}{\begin{rem}}
\newcommand{\er}{\end{rem}}
\newcommand{\brs}{\begin{rems}}
\newcommand{\ers}{\end{rems}}
\newcommand{\bo}{\begin{obser}}
\newcommand{\eo}{\end{obser}}
\newcommand{\bos}{\begin{obsers}}
\newcommand{\eos}{\end{obsers}}
\newcommand{\bpf}{\begin{pf}}
\newcommand{\epf}{\end{pf}}
\newcommand{\ba}{\begin{array}}
\newcommand{\ea}{\end{array}}
\newcommand{\beq}{\begin{eqnarray}}
\newcommand{\beqq}{\begin{eqnarray*}}
\newcommand{\eeq}{\end{eqnarray}}
\newcommand{\eeqq}{\end{eqnarray*}}
\numberwithin{equation}{section}
\newcounter{minutes}\setcounter{minutes}{\time}
\newcounter{hours}\setcounter{hours}{\time}
\begin{document}
\title{Schwarz type lemmas and their applications in Banach spaces}

\author[Shaolin Chen, Hidetaka Hamada, Saminathan Ponnusamy, and Ramakrishnan Vijayakumar]{Shaolin Chen, Hidetaka Hamada, Saminathan Ponnusamy, and Ramakrishnan Vijayakumar}

\address{S. L.  Chen, College of Mathematics and
Statistics, Hengyang Normal University, Hengyang, Hunan 421002,
People's Republic of China; Hunan Provincial Key Laboratory of Intelligent Information Processing and Application,  421002,
People's Republic of China.} \email{mathechen@126.com}

\address{H. Hamada, Faculty of Science and Engineering, Kyushu Sangyo University,
3-1 Matsukadai 2-Chome, Higashi-ku, Fukuoka 813-8503, Japan.}
\email{ h.hamada@ip.kyusan-u.ac.jp}

\address{S. Ponnusamy, Department of Mathematics, Indian Institute of Technology Madras,
Chennai-600 036, India.} \email{
samy@iitm.ac.in}

\address{R. Vijayakumar,
Department of Mathematics, Indian Institute of Technology Madras,
Chennai-600 036, India. } \email{mathesvijay8@gmail.com}


\maketitle

\def\thefootnote{}
\footnotetext{2010 Mathematics Subject Classification. Primary 32A30, 32U05, 32K05; Secondary 30C80, 31C10, 32M15.}
\footnotetext{Keywords.
Banach space,
bounded symmetric domain,
harmonic function,
holomorphic mapping,
pluriharmonic mapping,
Schwarz type lemma}
\makeatletter\def\thefootnote{\@arabic\c@footnote}\makeatother

\begin{abstract}
The main purpose of this paper is to develop some methods to  investigate  the Schwarz type lemmas of holomorphic mappings and pluriharmonic
mappings in  Banach spaces. Initially, we  extend the classical Schwarz lemmas of holomorphic mappings to Banach spaces, and then we apply these extensions  to establish a sharp Bloch type theorem for pluriharmonic mappings on homogeneous unit balls of $\C^n$ and to obtain some sharp  boundary Schwarz type lemmas for holomorphic mappings in Banach spaces.
Furthermore, we improve and generalize the  classical Schwarz lemmas of planar harmonic mappings  into the sharp forms of Banach spaces, and present some applications to sharp boundary Schwarz type lemmas for pluriharmonic mappings in Banach spaces. Additionally, using a relatively simple method of proof,  we prove some sharp Schwarz-Pick type estimates of pluriharmonic mappings  in JB$^*$-triples, and the obtained results provide the improvements and generalizations
 of the corresponding  results
in \cite{CH20}.
\end{abstract}

\maketitle
\tableofcontents

\section{Preliminaries }\label{Intro}
It is well known that the Schwarz lemma has become a crucial theme in lots of branches of mathematical research for more
than a hundred years to date. We refer the reader to \cite{Ah,BK,CLW,EJLS,E-1,E-2,GHK-JAM,HRS,Lelong89,Liu-21,L-T,Ro,Wu,Y} for more details on this topic.
This paper continues the study of the classical Schwarz lemmas of
holomorphic mappings and harmonic mappings (or complex-valued harmonic functions).
First, we  extend the classical Schwarz lemmas of holomorphic mappings to   Banach spaces, and then we use the obtained
results to establish a sharp Bloch type theorem for pluriharmonic mappings on homogeneous unit balls of $\C^n$
and  obtain sharp  boundary Schwarz type lemmas for holomorphic mappings in Banach spaces. In addition, we improve and generalize the
classical Schwarz lemmas of planar harmonic mappings into the sharp forms of Banach spaces, and obtain some applications to sharp  boundary Schwarz
type lemmas for pluriharmonic mappings in Banach spaces. At last, we use a relatively simple method to
 prove some sharp Schwarz-Pick type estimates of pluriharmonic mappings in JB$^*$-triples, and the obtained results are also the improvements and generalizations  of the corresponding known results.

In order to state our main results, we need to recall some basic definitions and introduce some necessary terminologies.

Let $\mathbb{C}^{n}$ be the  complex space of dimension $n\ (n \geq 1)$, and $\|\cdot\|_{e}$ be the Euclidean norm on $\mathbb{C}^{n}$. For real or complex Banach spaces $X$ and $Y$ with norm $\|\cdot\|_{X}$ and $\|\cdot\|_{Y}$, respectively, let $L(X,Y)$ be the space of all continuous linear operators from $X$ into $Y$ with the standard
operator norm  $$\|A\|=\sup_{x\in X\backslash\{0\}}\frac{\|A
x\|_{Y}}{\|x\|_{X}},$$ where $A\in L(X,Y)$.
Then $L(X,Y)$ is a Banach space with respect to this norm.
Denote by $X^{\ast}$ the
dual space of the real or complex Banach space $X$. For $x\in
X\backslash\{0\}$, let
$$T(x)=\{l_x\in X^*:\ l_x(x)=\|x\|_X~\mbox{and}~ \|l_x\|_{X^*}=1\}.$$
Then the well known Hahn-Banach theorem implies that $T(x)\neq\emptyset$.

Let $\psi$ be a mapping of a domain $\Omega\subset X$ into a real or complex Banach space $Y$, where
$X$ is a complex Banach space.
We say that $\psi$ is differentiable at $z\in \Omega$
if there exists a bounded real linear operator $D\psi(z):\, X\to Y$
such that
\[
\lim_{\| \tau\|_X\to 0^{+}}\frac{\|
\psi(z+\tau)-\psi(z)-D\psi(z)\tau\|_Y}{\| \tau\|_X}=0.
\]
Here $D\psi(z)$ is called the Fr\'{e}chet derivative of $\psi$ at $z$. If $Y$ is a complex Banach space and $D\psi(z)$ is bounded complex linear for each $z\in \Omega$, then $\psi$ is said to be holomorphic on $\Omega$.

Also, for $z\in \Omega\setminus \{ 0\}$,
if
\[
\lim_{r\to 1^{-}}\frac{\psi(rz)-\psi(z)}{r-1}
\]
exists, then we call this the radial derivative of $\psi$ at $z$.
If $\psi$ is differentiable at $z\in \Omega \setminus \{ 0\}$,
then the radial derivative of $\psi$ at $z$ is equal to $D\psi(z)z$.
So, in general, we denote the radial derivative of $\psi$ at $z$ by $D\psi(z)z$.

For a differentiable mapping $\psi:\, \Omega\to Y$ and for a point $z_0\in \Omega$ which satisfies one of the following conditions:
\begin{enumerate}
\item[(i)]
$\psi(z_0)=0$;
\item[(ii)]
$\psi(z_0)\neq 0$ and $\|\psi(z)\|_Y$ is differentiable at $z=z_0$,
\end{enumerate}
we define
\[
|\nabla \| \psi\|_Y(z_0)|=
\sup_{\| \beta\|_X=1}\lim_{\mathbb{R}\ni t\to 0^+}
\frac{\left| \|\psi(z_0+t\beta)\|_Y-\| \psi(z_0)\|_Y\right|}{t}.
\]
As in the proof of \cite[eq.(3.1)]{X18},
we obtain the following {result}.

\begin{Pro}\label{Po-1.0}
\begin{equation}
\label{nabla-norm-differentiable}
|\nabla \| \psi\|_Y(z_0)|=
\left\{
\begin{array}{ll}
\| D\psi(z_0)\| & \mbox{if $\psi(z_0)=0$};\\
\sup_{\| \beta\|_{X}=1}\left|l_{\psi(z_0)}(D\psi(z_0)\beta)\right| & \mbox{if $\psi(z_0)\neq 0$},
\end{array}
\right.
\end{equation}
where
$l_{\psi(z_0)}\in T(\psi(z_0))$.
\end{Pro}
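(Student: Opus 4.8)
The plan is to reduce the supremum defining $|\nabla\|\psi\|_Y(z_0)|$ to a computation of the one-sided directional derivative of $t\mapsto\|\psi(z_0+t\beta)\|_Y$, and then to optimise over the unit sphere. The common starting point for both cases is the Fr\'echet expansion $\psi(z_0+t\beta)=\psi(z_0)+t\,D\psi(z_0)\beta+o(t)$ as $t\to0^+$, which is legitimate since $\|t\beta\|_X=t$ whenever $\|\beta\|_X=1$.

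In the case $\psi(z_0)=0$ the conclusion is immediate. For $t>0$ one has $\|\psi(z_0+t\beta)\|_Y/t=\|D\psi(z_0)\beta+o(1)\|_Y$, and letting $t\to0^+$ gives $\|D\psi(z_0)\beta\|_Y$ by continuity of the norm; taking the supremum over $\|\beta\|_X=1$ yields $\|D\psi(z_0)\|$, the first line of \eqref{nabla-norm-differentiable}. Note that the differentiability of $\|\psi(z)\|_Y$ is not needed here.

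The case $\psi(z_0)\neq0$ is the substantive one. Write $y_0=\psi(z_0)$ and fix $l_{\psi(z_0)}\in T(\psi(z_0))$. Since $\|u\|_Y\ge\re\,l_{\psi(z_0)}(u)$ for every $u$, the expansion gives $\|\psi(z_0+t\beta)\|_Y\ge\|y_0\|_Y+t\,\re\,l_{\psi(z_0)}(D\psi(z_0)\beta)+o(t)$, hence the lower bound $\big(\|\psi(z_0+t\beta)\|_Y-\|y_0\|_Y\big)/t\ge\re\,l_{\psi(z_0)}(D\psi(z_0)\beta)+o(1)$. Because $\|\psi(z)\|_Y$ is assumed differentiable at $z_0$, the two-sided limit of the left-hand side exists and equals $D(\|\psi\|_Y)(z_0)\beta$, a real-linear function of $\beta$; applying the lower bound to both $\beta$ and $-\beta$ forces equality, $D(\|\psi\|_Y)(z_0)\beta=\re\,l_{\psi(z_0)}(D\psi(z_0)\beta)$. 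Consequently the inner limit in the definition, taken with the absolute value, equals $|\re\,l_{\psi(z_0)}(D\psi(z_0)\beta)|$, so that $|\nabla\|\psi\|_Y(z_0)|=\sup_{\|\beta\|_X=1}|\re\,l_{\psi(z_0)}(D\psi(z_0)\beta)|$.

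The step I expect to be the main obstacle is the final passage from the real part to the full complex modulus, namely the identity $\sup_{\|\beta\|_X=1}|\re\,F(\beta)|=\sup_{\|\beta\|_X=1}|F(\beta)|$ for $F:=l_{\psi(z_0)}\circ D\psi(z_0)$. I would establish it by complex homogeneity: given $\beta$, pick $\theta$ with $e^{\ii\theta}F(\beta)=|F(\beta)|$, so that $e^{\ii\theta}\beta$ is again a unit vector and $\re\,F(e^{\ii\theta}\beta)=|F(\beta)|$; combined with $|\re\,F|\le|F|$ and the symmetry $\beta\mapsto-\beta$, this yields the two suprema are equal and hence the second line of \eqref{nabla-norm-differentiable}. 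The delicate point to verify is precisely the complex linearity of $F$, which lets the rotation $\beta\mapsto e^{\ii\theta}\beta$ be transferred through $D\psi(z_0)$: this requires $D\psi(z_0)$ itself to be complex linear (it holds for the holomorphic mappings to which the proposition is applied, since $l_{\psi(z_0)}\in Y^\ast$ is already complex linear), and it is the hypothesis I would isolate and check most carefully before asserting the modulus form.
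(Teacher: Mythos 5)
Your reconstruction is, in substance, the proof the paper intends: the paper offers no argument of its own for Proposition \ref{Po-1.0}, only the pointer ``as in the proof of \cite[eq.(3.1)]{X18}'', and Xu's argument there (for \emph{holomorphic} mappings) is exactly your scheme --- Fr\'echet expansion, the lower bound through $l_{\psi(z_0)}$, the $\pm\beta$ trick giving $D(\|\psi\|_Y)(z_0)\beta=\re\, l_{\psi(z_0)}(D\psi(z_0)\beta)$, then the rotation $\beta\mapsto e^{i\theta}\beta$. Your case $\psi(z_0)=0$ and your real-part identity are correct and cleanly argued.

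The hypothesis you flagged at the end is not a formality: it is essential, and the proposition as printed is false without it. The paper states the result for all differentiable $\psi$ (whose Fr\'echet derivative is only real-linear) and then applies it to \emph{pluriharmonic} mappings (in the proofs of Proposition \ref{prop} and Theorem \ref{Schwarz-norm}), not only to holomorphic ones, so your parenthetical ``it holds for the holomorphic mappings to which the proposition is applied'' does not actually cover the paper's own use of the statement. A concrete counterexample to the modulus form: take $X=Y=\C$, $\Omega=\U$, $\psi(z)=1+z+i\overline{z}$, $z_0=0$. Then $\psi(0)=1\neq 0$, $\|\psi(z)\|_Y$ is differentiable at $0$, $T(\psi(0))=\{\mathrm{id}_{\C}\}$, and $D\psi(0)\beta=\beta+i\overline{\beta}$ is real-linear but not complex-linear; writing $\beta=e^{i\theta}$,
\[
|\nabla \| \psi\|_Y(0)|=\sup_{|\beta|=1}\left|\re\big(\beta+i\overline{\beta}\big)\right|
=\sup_{\theta}|\cos\theta+\sin\theta|=\sqrt{2},
\qquad
\sup_{|\beta|=1}\left|\beta+i\overline{\beta}\right|=2 .
\]
So in the stated generality the correct formula in the case $\psi(z_0)\neq 0$ is precisely your intermediate one, $\sup_{\|\beta\|_X=1}\left|\re\, l_{\psi(z_0)}(D\psi(z_0)\beta)\right|$; the displayed modulus form is recovered exactly when $D\psi(z_0)$ is complex linear (your rotation argument) or when $Y$ is a \emph{real} Banach space (then $l_{\psi(z_0)}$ is real-valued and $\re$ is vacuous). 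What does remain true in general --- and is all that the paper's later applications actually need for their upper bounds --- is the inequality $|\nabla \| \psi\|_Y(z_0)|\leq \sup_{\|\beta\|_X=1}\left|l_{\psi(z_0)}(D\psi(z_0)\beta)\right|\leq \| D\psi(z_0)\|$, which follows at once from your real-part identity.
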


Let $\Omega$ be a domain in a complex Banach space $X$. A mapping $f$ of
$\Omega$ into a real or complex Banach space $Y$ is said to be pluriharmonic if
the restriction of $l(f(\cdot))$ to every holomorphic curve is
harmonic for any $l\in Y^*$ (cf. \cite{CH20,DHK-2011,Iz,Ra, RU}). In particular, if $\Omega$ is a balanced domain,
for a pluriharmonic mapping $f:\,\Omega\to Y$ and $w\in \partial \Omega$,
then we let
\begin{eqnarray*}
{\Lambda_f(0;w)}
&=&
\sup\{ |\varphi[f,w,l_u]_{\zeta}(0)|:\, l_u\in T(u), \| u\|_Y=1\}
\\
&&\quad
+
\sup\{ |\varphi[f,w,l_u]_{\overline{\zeta}}(0)|:\, l_u\in T(u), \| u\|_Y=1\},
\end{eqnarray*}
where
\[
\varphi[f,w,l_u](\zeta)=
l_u\left(f\left(\zeta w\right)\right) ,
\quad
\zeta \in \U
\]
and
$\U$ is the open unit
disk of the complex plane $\mathbb{C}$.
We note that the inequality $\Lambda_f(0;w)\leq \frac{4}{\pi}$
always holds for pluriharmonic mappings $f:\,\Omega\to Y$ with $\| f(x)\|_Y<1$
for all $x\in \Omega$ by the harmonic Schwarz lemma on the unit disk. If $Y=\mathbb{C}^{n}$ or $Y=\ell_2$,
where
\[
\ell_2=\left\{ z=(z_1, z_2,\dots): z_j\in \mathbb{C}, \quad
\sum_{j=1}^{\infty}|z_j|^2<\infty \right\},
\]
then the mapping $f=h+\overline{g}$ is pluriharmonic on $\Omega$, where $h$ and $g$ are holomorphic in $\Omega$. In this case,
\[
\Lambda_f(0;w)=\left\| Dh(0){w}\right\|_Y
+
\left\| \overline{Dg(0)w}\right\|_Y,
\quad w\in \partial \Omega.
\]

Furthermore, if $X=Y=\mathbb{C}^{n}$  and $\Omega$ is a simply connected
domain, then  $f:\,\Omega\to\mathbb{C}^{n}$ is pluriharmonic if and only
if it has a representation $f=h+\overline{g}$, where $h$ and $g$ are
holomorphic in $\Omega$. This representation is unique if
$g(0)=0$ (cf. \cite{DHK-2011,Ru2,Vl}). For a pluriharmonic mapping
$f=h+\overline{g}:\, \Omega\to\mathbb{C}^{n}$, it is an elementary exercise
to see that the real Jacobian  determinant of $f$ can be written as


\begin{equation}\nonumber
\det J_{f}=\det\left[
\begin{array}{cc}  
Dh& \overline{Dg} \\
Dg & \overline{Dh}
\end{array}
\right ]                               
\end{equation}
and  if $h$ is locally biholomorphic  in $\Omega$, then the determinant
of $J_{f}$ has the form

\beqq\label{eq-ex2a} \det J_{f}=|\det
Dh|^{2}\det\left(I-Dg[Dh]^{-1}\overline{Dg[Dh]^{-1}}\right), \eeqq
where $I$ is the identity operator (see \cite{DHK-2011}).

For an $n\times m$ complex matrix $A=(a_{ij})$,
the Frobenius norm of $A$ is defined as follows:
\[
\| A\|_F=
\sqrt{\sum_{i=1}^n\sum_{j=1}^m |a_{ij}|^2}.
\]
Then we have
\begin{equation}
\label{A-Frobenius-Operator}
\| A\|_F^2
\leq
m\| A\|^2,
\end{equation}
where
{\[
\|A\|=\sup_{\xi\in \C^m\backslash\{0\}}\frac{\|A
\xi\|_{e}}{\|\xi\|_{e}}.
\]}
Let $\Omega$ be a domain in $\C^m$.
For a pluriharmonic mapping $f:\, \Omega\to \C^n$,
let
\[
\nabla f(z)
=
\left(
\frac{\partial f}{\partial x_1}(z),
\frac{\partial f}{\partial y_1}(z),
\dots,
\frac{\partial f}{\partial x_m}(z),
\frac{\partial f}{\partial y_m}(z)
\right),
\]
where
$z=(z_1,\dots, z_m)\in \Omega$ and
$z_j=x_j+iy_j$ for $j=1, \dots, m$.

\begin{Def}
\label{d.triple} A complex Banach space $X$ is called a {\it
JB$^*$-triple} if there exists a triple product
$\{\cdot,\cdot,\cdot\} :\, X^3 \to X$ which is conjugate linear in the
middle variable, but linear and symmetric in the other variables,
and satisfies
\begin{enumerate}
\item[(i)] $\{a,b,\{x,y,z\}\} = \{\{a,b,x\},y,z\}\,-\, \{x,
\{b,a,y\}, z\}\, +\, \{x,y,\{a,b,z\}\}$;
\item [(ii)] the map
$a\square a:\,  x\in X\mapsto \{a,a,x\} \in X$ is hermitian with
nonnegative spectrum;
\item [(iii)] $\|\{a,a,a\}\|_X = \|a\|_X^3;$
\end{enumerate}
for $a,b,x,y, z \in X$.
\end{Def}

Let  $\Omega$ be a domain in a complex Banach space $X$. Denote by $\mbox{Aut}(\Omega)$ the set of biholomorphic automorphisms of $\Omega$.
A domain $\Omega\subset X$  is said to be homogeneous if for each $x,y\in\Omega$, there exists some mapping $f\in\mbox{Aut}(\Omega)$ such that
$f(x)=y$. It is known that every bounded symmetric domain in
a complex Banach space $X$ is homogeneous. Conversely, the open unit ball $B_{X}$  of $X$ admits a symmetry $s(z)=-z$ at $0$ and if $B_{X}$
is homogeneous, then $B_{X}$ is a symmetric domain. It is well known that the Euclidean unit ball in $\mathbb{C}^{n}$, the polydisc in  $\mathbb{C}^{n}$ and the classical Cartan domains
are bounded symmetric domains in $\mathbb{C}^{n}$.  Banach spaces with a homogeneous open unit ball are precisely the JB$^*$-triples.
 We refer to \cite{C12,C-2021,CHHK17,Mo-1,Mo-2, Mo-3} for more
details of JB$^*$-triples and bounded symmetric domains.

Let $X$ be a JB$^*$-triple. For every $z,w\in X$, the Bergman
operator $B(z,w)\in {L}(X)$ is defined by
$$
B(z,w)(\cdot)={I}-2z\square w+\{z,\{w,\cdot,w\},z\},
$$
where  $z\square w(x)=\{z,w,x\}$. Let $\B_X$ be the unit ball of
$X$. Then, for each $a \in \B_X$, the M\"{o}bius transformation
$g_a$ defined by
\begin{equation}
\label{Mobius} g_a(z)=a+B(a,a)^{1/2}(I+z\square a)^{-1}z,
\end{equation}
is a biholomorphic automorphism of $\B_X$ with $g_a(0)=a$,
$g_a(-a)=0$, $g_{-a}=g_a^{-1}$ and $Dg_a(0)=B(a,a)^{1/2}$. By
\cite[Corollary 3.6]{K94}, we have
\begin{equation}
\label{Kaup-estimate} \| Dg_{z_0}(0)^{-1}\| =\|
Dg_{-z_0}(z_0)\|=\frac{1}{1-\| z_0\|_X^2}.
\end{equation}

Given JB*-triples $X_1$, \ldots, $X_n$, we can form the
$\ell^\infty$-sum $X= X_1 \oplus \cdots \oplus X_n$ which becomes a
JB*-triple equipped with the coordinatewise triple product:
\[
\{ x,y,z\}=(\{x_1,{y_1},z_1\}, \cdots, \{x_i,{y_i},z_i\}, \cdots,
\{x_n,{ y_n},z_n\})\] for $x=(x_i),\, y=(y_i),\, z=(z_i)\in X$. Let
$\B_{X_j}$ be the open unit ball of $X_j$ for $j=1,\dots, n$. Then
their product $\B_{X_1}\times \cdots\times \B_{X_n}$ is the open
unit ball of $\B_X$ of the JB$^*$-triple $X$. Let $g_{j,a_j}$
($a_j\in \B_{X_j}$) be the M\"{o}bius transformation of $\B_{X_j}$
for $j=1, \dots, n$. Then, for $a=(a_1, \dots, a_n)\in \B_X$,
\[
g_a(z)=(g_{1,a_1}(z_1), \dots, g_{n,a_n}(z_n)), \quad z=(z_1, \dots,
z_n)\in\B_X,
\]
is the M\"{o}bius transformation of $\B_{X}$.

\section{Schwarz  type lemmas of holomorphic mappings and their applications}\label{sec2}

The classical Schwarz lemma
states that every holomorphic mapping $f$ of the unit disk $\U$ into itself
 with $f(0)=0$ satisfies $|f(z)|\leq|z|$ for all $z\in\U.$ Moreover, unless $f$ is a rotation, one has the strict inequality $|f'(0)|<1$, and $f$ maps each disk
 $\U_{r}:=\{z:\, |z|< r<1\}$ into a strictly smaller one.
Lindel\"of removed the assumption ``origin is a fixed point" and
improved the classical Schwarz lemma of holomorphic mappings into
the following form.

 \begin{ThmA}\label{Thm-CPV-1}{\rm  (\cite[Proposition  2.2.2]{Kra})}
Let $f$ be a holomorphic mapping  of $\mathbb{U}$ into itself. Then,
for $z\in\mathbb{U}$, the following sharp estimate
\be\label{Sch-1-1} |f(z)|\leq\frac{|f(0)|+|z|}{1+|f(0)|\,|z|} \ee
holds.
\end{ThmA}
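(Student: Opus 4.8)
The plan is to reduce this statement, Lindelöf's inequality, to the classical Schwarz lemma by composing $f$ with a suitable Möbius automorphism of $\U$ that sends $f(0)$ back to the origin. Write $a=f(0)\in\U$ and consider the disk automorphism
\[
\varphi_a(w)=\frac{w-a}{1-\overline{a}\,w},
\qquad w\in\U,
\]
which satisfies $\varphi_a(a)=0$ and maps $\U$ biholomorphically onto itself. Then the composition $g=\varphi_a\circ f$ is a holomorphic self-map of $\U$ with $g(0)=\varphi_a(f(0))=\varphi_a(a)=0$, so the classical Schwarz lemma applies directly to $g$ and yields $|g(z)|\le|z|$ for all $z\in\U$.

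The next step is to translate the bound $|g(z)|\le|z|$ back into a bound on $|f(z)|$. Since $g(z)=\varphi_a(f(z))$, we have $|\varphi_a(f(z))|\le|z|$, that is, the point $f(z)$ lies in the pseudo-hyperbolic disk of radius $|z|$ centered at $a$. Explicitly, writing $w=f(z)$, the inequality $\left|\frac{w-a}{1-\overline{a}\,w}\right|\le|z|$ describes a Euclidean disk, and I would extract from it the sharp bound on $|w|=|f(z)|$. The cleanest route is to use the standard estimate for the pseudo-hyperbolic metric: the inequality $\left|\frac{w-a}{1-\overline{a}\,w}\right|\le r$ forces
\[
|w|\le\frac{|a|+r}{1+|a|\,r},
\]
which is exactly the monotone increasing behavior of the Möbius-type function $t\mapsto\frac{|a|+t}{1+|a|\,t}$ on $[0,1)$. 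Substituting $r=|z|$ and $|a|=|f(0)|$ then gives precisely \eqref{Sch-1-1}.

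The main obstacle, such as it is, lies in this final algebraic passage: justifying that $\left|\frac{w-a}{1-\overline{a}\,w}\right|\le|z|$ implies $|w|\le\frac{|a|+|z|}{1+|a|\,|z|}$. One verifies this by applying the triangle inequality in the form $|w-a|\ge |w|-|a|$ and $|1-\overline{a}\,w|\le 1+|a|\,|w|$, or more systematically by writing the defining inequality as $|w-a|^2\le|z|^2\,|1-\overline{a}\,w|^2$, expanding both sides, and solving the resulting quadratic inequality in $|w|$; the extremal configuration occurs when $w$, $a$, and the origin are collinear, which is what makes the bound attain equality and hence sharp. Sharpness itself follows by exhibiting an extremal map: taking $f$ to be a Möbius automorphism of $\U$ (so that $g$ is a rotation and $|g(z)|=|z|$) with $f(0)=a$ and $z$, $a$ aligned appropriately produces equality in \eqref{Sch-1-1}.
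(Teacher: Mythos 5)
The paper itself gives no proof of Theorem A: it is quoted as Lindel\"of's classical result with a citation to Krantz \cite[Proposition 2.2.2]{Kra}, so your argument can only be judged on its own merits. In essence it is sound, and it is the standard textbook route: compose with the automorphism $\varphi_a$, $a=f(0)$, apply the classical Schwarz lemma to $g=\varphi_a\circ f$ to get $\left|\frac{f(z)-a}{1-\overline{a}f(z)}\right|\leq |z|$, and then convert this pseudo-hyperbolic bound into a Euclidean bound on $|f(z)|$. Your ``systematic'' version of the last step is correct: writing $w=f(z)$, $r=|z|$, expanding $|w-a|^2\leq r^2|1-\overline{a}w|^2$, using ${\rm Re}(\overline{a}w)\leq |a||w|$, one gets $(1-r^2|a|^2)|w|^2-2|a|(1-r^2)|w|+(|a|^2-r^2)\leq 0$, whose larger root is exactly $\frac{|a|+r}{1+|a|r}$. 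The sharpness argument via automorphisms with $a$ and $z$ aligned is also fine.

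One concrete warning, however: the first of your two suggested finishes --- the triangle-inequality shortcut with $|w-a|\geq |w|-|a|$ and $|1-\overline{a}w|\leq 1+|a||w|$ --- does not work. Chaining those estimates yields $|w|-|a|\leq r\left(1+|a||w|\right)$, hence $|w|\leq\frac{|a|+r}{1-|a|r}$: the denominator comes out with the wrong sign, and this bound is strictly weaker than the asserted one (it can even exceed $1$). The sharp denominator $1+|a|r$ genuinely requires the quadratic expansion, or equivalently the identity $1-\rho(w,a)^2=\frac{(1-|w|^2)(1-|a|^2)}{|1-\overline{a}w|^2}$ combined with $|1-\overline{a}w|\geq 1-|a||w|$, which gives the correct one-sided estimate $\rho(w,a)\geq\frac{\left||w|-|a|\right|}{1-|a||w|}$. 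Since you do supply the correct alternative, the proof stands, but the parenthetical shortcut should be deleted rather than presented as an equally valid option.
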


\begin{Rem}\label{cor-Kr-1}
	By the maximum modulus principle, Theorem A continuous to hold for holomorphic  mapping $f$ from $\mathbb{U}$ into $\overline{\mathbb{U}}$.
\end{Rem}

Under the assumption of Theorem A, if   ``$|f(z)|$" in (\ref{Sch-1-1}) is replaced by ``$|f(z)-f(0)|$", then Harris \cite{H-1977} obtained
the following sharp estimate
\be\label{thk-1}|f(z)-f(0)|\leq|z|\frac{1-|f(0)|^{2}}{1-|f(0)||z|}.\ee
The extension of the estimate (\ref{thk-1}) is probably of independent interest.
In particular, if the origin is a fixed point of $f$ in Theorem A, then Osserman
obtained a better estimate as follows.

 \begin{ThmB}\label{Thm-CPV-2}{\rm  (\cite[Lemma 2]{Os})}
Let $f$ be a holomorphic mapping  of $\mathbb{U}$ into itself with
$f(0)=0$. Then, for  $z\in\mathbb{U}$, the following sharp estimate
\be\label{Sch-O-2} |f(z)|\leq|z|\frac{|f'(0)|+|z|}{1+|f'(0)|\,|z|}
\ee holds.
\end{ThmB}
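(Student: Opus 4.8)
The plan is to establish \eqref{Sch-O-2} by applying the Schwarz–Pick machinery to the function obtained after dividing out the known zero at the origin. Since $f$ is holomorphic on $\U$ with $f(0)=0$, the quotient $F(z)=f(z)/z$ extends holomorphically across $0$ (the removable singularity being filled by $F(0)=f'(0)$), and by the classical Schwarz lemma $|f(z)|\le |z|$ forces $\|F\|_\infty\le 1$, so $F$ maps $\U$ into $\overline{\U}$. The key observation is that $|F(0)|=|f'(0)|$, so \eqref{Sch-O-2} is exactly the statement $|f(z)|\le |z|\,|F(z)|\le|z|\,\frac{|F(0)|+|z|}{1+|F(0)|\,|z|}$, which is precisely Theorem A (in the form extended by Remark \ref{cor-Kr-1} to mappings into $\overline{\U}$) applied to $F$.

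First I would verify that $F$ is a well-defined holomorphic self-map of $\overline{\U}$: the only point requiring care is $z=0$, where one checks that $F(z)\to f'(0)$ as $z\to 0$, and that $|F(z)|\le 1$ on $\U\setminus\{0\}$ by the classical Schwarz lemma, whence the bound persists at $0$ by continuity. Next I would invoke Theorem A together with Remark \ref{cor-Kr-1} on $F$ to obtain
\[
|F(z)|\le \frac{|F(0)|+|z|}{1+|F(0)|\,|z|}=\frac{|f'(0)|+|z|}{1+|f'(0)|\,|z|},\qquad z\in\U.
\]
Finally, multiplying through by $|z|$ and using $|f(z)|=|z|\,|F(z)|$ yields \eqref{Sch-O-2}.

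For sharpness, I would exhibit an explicit extremal map. The natural candidate is $f(z)=z\,\varphi(z)$, where $\varphi$ is the automorphism of $\U$ realizing equality in Theorem A for the function $F$; concretely, taking $\varphi(z)=\frac{a+z}{1+\bar a z}$ with $a=f'(0)$ (a Blaschke factor, so $\varphi$ maps $\U$ onto $\U$) and $f(z)=z\varphi(z)$ produces a holomorphic self-map of $\U$ fixing the origin with $f'(0)=a$ and attaining \eqref{Sch-O-2} on the radius where the modulus is maximized. One should confirm that this $f$ indeed maps $\U$ into $\U$ (which is immediate, since $|z\varphi(z)|<1$ for $|z|<1$) and that equality holds for suitable boundary-directed $z$.

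The only genuinely delicate step is the passage to $F$ at the origin and the reduction to Theorem A; once that is in place the argument is a one-line application followed by multiplication by $|z|$. I do not anticipate any real obstacle, since the structure of the problem—factoring out the forced zero and reducing a Schwarz-type bound to the already-established Lindelöf form—is completely standard; the main care is simply to cite Remark \ref{cor-Kr-1} correctly so that the target space $\overline{\U}$ (rather than $\U$) is permitted, and to record the extremal function cleanly to justify the claim that the estimate is sharp.
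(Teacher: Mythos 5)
Your proof is correct. Note that the paper itself offers no proof of Theorem B --- it is quoted directly from Osserman (\cite[Lemma 2]{Os}) --- so there is no internal argument to compare against; but your reduction is exactly Osserman's original one: factor out the forced zero via $F(z)=f(z)/z$ (removable singularity, $F(0)=f'(0)$, $|F|\leq 1$ by the classical Schwarz lemma), apply Lindel\"of's inequality to $F$ --- which is precisely the paper's Theorem A extended to maps into $\overline{\mathbb{U}}$ by Remark \ref{cor-Kr-1} --- and multiply back by $|z|$. Your sharpness example $f(z)=z\,\dfrac{a+z}{1+\overline{a}z}$ is also the standard extremal function: equality holds along the radius $z=r a/|a|$ (and for every prescribed value $|f'(0)|=|a|$ one gets equality at any given point after a rotation), so the proposal is complete as written.
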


As an application,  by using (\ref{Sch-O-2}), Osserman \cite{Os}
established a version of the boundary Schwarz lemma which is as
follows.

\begin{ThmC}\label{Thm-Bound-1}
Let $f$ be a holomorphic mapping of $\mathbb{U}$ into itself with
$f(0)=0$. If $f$ is holomorphic at $b\in\partial\mathbb{U}$ $($or more generally, if $f$ is differentiable at $b\in\partial\mathbb{U}$$)$
and
$|f(b)|=1$, then $|f'(b)|\geq2/(1+|f'(0)|)$. Moreover, the
inequality is sharp.
\end{ThmC}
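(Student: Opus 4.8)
The plan is to approach the boundary point $b$ radially and to sandwich the quantity $1-|f(rb)|$ between a lower bound coming from Theorem B and an upper bound coming from differentiability at $b$. Since $|f(b)|=1$ while $|f(rb)|<1$ for $0<r<1$, the rate at which $1-|f(rb)|$ decays as $r\to 1^{-}$ is exactly what encodes $|f'(b)|$. First I would apply Theorem B with $z=rb$, so that $|z|=r$, to obtain
\[
|f(rb)|\le r\,\frac{|f'(0)|+r}{1+|f'(0)|\,r},
\]
and hence
\[
1-|f(rb)|\ \ge\ 1-r\,\frac{|f'(0)|+r}{1+|f'(0)|\,r}\ =\ \frac{1-r^{2}}{1+|f'(0)|\,r}\ =\ \frac{(1-r)(1+r)}{1+|f'(0)|\,r}.
\]

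For the matching upper bound I would use that $f$ is differentiable at $b$. Since $rb-b=-b(1-r)$ and $|b|=1$, the radial derivative of $f$ at $b$ exists and has modulus $|f'(b)|$, so
\[
f(b)-f(rb)=f'(b)\,b\,(1-r)+o(1-r)\qquad(r\to 1^{-}),
\]
which together with $|f(b)|=1$ and the reverse triangle inequality gives
\[
1-|f(rb)|\ \le\ |f(b)-f(rb)|\ =\ |f'(b)|\,(1-r)+o(1-r).
\]
Chaining the two estimates, dividing by $1-r>0$, and letting $r\to 1^{-}$ produces
\[
|f'(b)|\ \ge\ \lim_{r\to 1^{-}}\frac{1+r}{1+|f'(0)|\,r}\ =\ \frac{2}{1+|f'(0)|},
\]
which is the desired inequality.

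To see that the constant cannot be improved, I would exhibit an extremal function. Taking $a=|f'(0)|\in[0,1)$ and the boundary point $b=1$, set
\[
f(z)=\frac{z\,(z+a)}{1+a\,z}.
\]
This $f$ maps $\mathbb{U}$ into itself, being a product of $z$ with a disk automorphism, satisfies $f(0)=0$ and $f'(0)=a$, and has $|f(1)|=1$; a direct differentiation yields $f'(1)=2/(1+a)$, so equality is attained. Composing with rotations then gives sharpness at any prescribed boundary point.

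The main obstacle is the second step under the weaker hypothesis that $f$ is merely differentiable, rather than holomorphic, at $b$: one must justify that the single radial limit governing $1-|f(rb)|$ is controlled by $|f'(b)|$ with an honest $o(1-r)$ error. This is precisely the identification of the radial derivative $\lim_{r\to 1^{-}}(f(rb)-f(b))/(r-1)$ with $Df(b)b$ recorded in the preliminaries, so that $|f'(b)|$ is read as the modulus of this radial derivative whenever the full complex derivative at $b$ is unavailable.
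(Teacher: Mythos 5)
Your proof is correct and follows essentially the same route the paper takes for its Banach-space generalization (Theorem \ref{thm-3h}): a lower bound on $\bigl(1-|f(rb)|\bigr)/(1-r)$ from Theorem B, the reverse triangle inequality using $|f(b)|=1$, a radial limit as $r\to 1^{-}$, and the same extremal function $f(z)=z\,(z+a)/(1+az)$ (the paper's sharpness example $f(z)=\frac{l_b(z)+r}{1+r\,l_b(z)}\,l_b(z)\,y$ reduces to yours when $X=Y=\C$, $b=1$). Your closing remark about reading $|f'(b)|$ as the modulus of the radial derivative $Df(b)b$ under the weaker hypothesis matches exactly how the paper handles that point.
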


In fact, Unkelbach \cite{U-1938} had established a similar result as follows:
Let $f$ be a holomorphic mapping of $\mathbb{U}$ into itself with
$f(0)=0$. If $$D=\lim_{z\rightarrow1^{-}}\frac{1-f(z)}{1-z}$$ exists, then $D\geq1$. Moreover, if $f(0)=\varrho\,e^{i\varphi}$, then
$D\geq2(1-\rho\cos\varphi)/(1-\rho^{2})$. This inequality is also sharp.
On the related investigation of the boundary Schwarz type lemmas of the Poisson-Stieltjes integral, we refer to  \cite{H-1940}.

In the following, we extend Theorems A and B to
Banach spaces, and then we apply the obtained results to study the
Bloch type Theorem and the boundary Schwarz type lemmas.

\begin{Thm}\label{thm-h1}
Suppose that $B_{X}$ and $B_{Y}$ are the unit balls of the complex
Banach spaces $X$ and $Y$, respectively. Let $f:\, B_{X}\rightarrow
\overline{B_{Y}}$ be a holomorphic mapping. Then
\beqq\|f(z)\|_{Y}\leq\frac{\|f(0)\|_{Y}+\|z\|_{X}}{1+\|f(0)\|_{Y}\|z\|_{X}} \ \ \mbox{for}~z\in
B_{X}. \eeqq
This estimate is sharp
with equality possible for each value of $\| f(0)\|_Y$
and for each $z\in B_X$.
\end{Thm}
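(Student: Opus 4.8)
The plan is to reduce the Banach space statement to the scalar case (Theorem A) by testing against a norming functional. Fix $z\in B_X$, and assume first that $f(z)\neq 0$ so that we may choose $l\in T(f(z))\subset Y^*$, i.e. a norm-one functional with $l(f(z))=\|f(z)\|_Y$. The composite $g:=l\circ f:\,B_X\to\overline{\U}$ is holomorphic and maps into the closed unit disk (since $\|l\|_{Y^*}=1$ and $\|f(w)\|_Y\le 1$ for all $w$). I would then restrict $g$ to the one-dimensional complex slice through the origin and $z$: define $\phi:\,\U\to\overline{\U}$ by $\phi(\zeta)=g(\zeta z/\|z\|_X)$, which is a holomorphic self-map of $\overline{\U}$ in a single complex variable.

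Next I would apply Theorem A (in the form of Remark \ref{cor-Kr-1}, which extends it to mappings into $\overline{\U}$) to $\phi$ at the point $\zeta_0=\|z\|_X\in\U$. This yields
\[
|\phi(\zeta_0)|\le \frac{|\phi(0)|+|\zeta_0|}{1+|\phi(0)|\,|\zeta_0|}.
\]
Here $\phi(\zeta_0)=g(z)=l(f(z))=\|f(z)\|_Y$ by the choice of $l$, and $|\zeta_0|=\|z\|_X$. The only remaining point is to control $|\phi(0)|=|l(f(0))|\le \|f(0)\|_Y$. Since the right-hand side $\frac{t+\|z\|_X}{1+t\,\|z\|_X}$ is increasing in $t\in[0,1]$ (its derivative in $t$ is $(1-\|z\|_X^2)/(1+t\|z\|_X)^2>0$), replacing $|\phi(0)|$ by the larger quantity $\|f(0)\|_Y$ only weakens the bound, giving exactly
\[
\|f(z)\|_Y\le \frac{\|f(0)\|_Y+\|z\|_X}{1+\|f(0)\|_Y\,\|z\|_X}.
\]
The degenerate case $f(z)=0$ is trivial since then the left side is $0$.

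The main subtlety, rather than an obstacle, is the bookkeeping around the norming functional and monotonicity: one must note that $l$ depends on $z$, but since $z$ is fixed throughout and the final bound only involves $\|f(0)\|_Y$ and $\|z\|_X$, no uniformity in $l$ is needed. To establish sharpness, I would exhibit, for any prescribed value $a=\|f(0)\|_Y\in[0,1)$ and any $z\in B_X$, a concrete holomorphic map attaining equality. The natural candidate is built from the extremal scalar map: take a norm-one functional $l_0\in X^*$ with $l_0(z)=\|z\|_X$, a unit vector $u\in Y$, and set
\[
f(w)=\left(\frac{a+l_0(w)}{1+a\,l_0(w)}\right)u,\qquad w\in B_X,
\]
which is holomorphic, maps $B_X$ into $\overline{B_Y}$ (as the scalar Möbius factor has modulus $\le 1$), satisfies $\|f(0)\|_Y=a$, and achieves equality at the given $z$ because $l_0(z)=\|z\|_X$. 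This confirms the estimate is sharp for every value of $\|f(0)\|_Y$ and every $z\in B_X$.
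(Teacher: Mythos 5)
Your proof is correct and follows essentially the same route as the paper's: reduce to the scalar case by composing with a norming functional along the slice through $z$, apply Theorem A (via the Remark for maps into $\overline{\U}$), use monotonicity of $t\mapsto (t+\|z\|_X)/(1+t\|z\|_X)$ to replace $|l(f(0))|$ by $\|f(0)\|_Y$, and exhibit the same M\"obius-type extremal map $w\mapsto \frac{a+l_0(w)}{1+a\,l_0(w)}\,u$ for sharpness. The only (immaterial) difference is that you choose the norming functional $l\in T(f(z))$ at the outset, whereas the paper works with an arbitrary $b\in\partial B_Y$ and specializes $b=f(z)/\|f(z)\|_Y$ at the end.
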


\begin{Thm}\label{thm-h2}
Suppose that $B_{X}$ and $B_{Y}$ are the unit balls of the complex
Banach spaces $X$ and $Y$, respectively. Let $f:\, B_{X}\rightarrow
B_{Y}$ be a holomorphic mapping with $f(0)=0$. Then
\beqq\|f(z)\|_{Y}\leq\frac{\|Df(0)\|+\|z\|_{X}}{1+\|Df(0)\|\|z\|_{X}}\|z\|_{X}\leq\|z\|_{X}
~\mbox{for}~z\in
B_{X}.
\eeqq
The first estimate is sharp
with equality possible for each value of $\| Df(0)\|$
and for each $z\in B_X$.
\end{Thm}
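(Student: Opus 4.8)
The plan is to reduce the Banach-space statement to the scalar Osserman estimate (Theorem B) by slicing along the complex line spanned by $z$ and post-composing with a norming functional, exactly in the spirit of the proof of Theorem \ref{thm-h1}. Fix $z \in B_X \setminus \{0\}$ (the case $z=0$ being trivial) and set $r = \|z\|_X \in [0,1)$ and $u = z/r$, so that $\|u\|_X = 1$. If $f(z)=0$ the asserted bound holds automatically; otherwise choose $l \in T(f(z))$, i.e.\ $l \in Y^{*}$ with $\|l\|_{Y^{*}}=1$ and $l(f(z)) = \|f(z)\|_Y$, whose existence is guaranteed by the Hahn--Banach theorem. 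I then introduce the scalar function $h:\,\U \to \C$ defined by $h(\zeta) = l(f(\zeta u))$.

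Next I would record the properties of $h$ that make Theorem B applicable. The map $h$ is holomorphic as a composition of the affine map $\zeta \mapsto \zeta u$, the holomorphic $f$, and the bounded complex-linear $l$. Since $\|\zeta u\|_X = |\zeta| < 1$ we have $f(\zeta u) \in B_Y$, hence $|h(\zeta)| \leq \|l\|_{Y^{*}}\|f(\zeta u)\|_Y < 1$, so $h$ maps $\U$ into itself; moreover $h(0) = l(f(0)) = 0$. Theorem B therefore gives $|h(\zeta)| \leq |\zeta|(|h'(0)|+|\zeta|)/(1+|h'(0)||\zeta|)$. Evaluating at $\zeta = r$ and using $h(r) = l(f(z)) = \|f(z)\|_Y$ yields $\|f(z)\|_Y \leq r(|h'(0)|+r)/(1+|h'(0)|r)$. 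By the chain rule together with the complex linearity of $l$ and of $Df(0)$ one computes $h'(0) = l(Df(0)u)$, whence $|h'(0)| \leq \|l\|_{Y^{*}}\|Df(0)u\|_Y \leq \|Df(0)\|$.

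The remaining analytic point is to replace $|h'(0)|$ by $\|Df(0)\|$ inside the bound. For fixed $t \in [0,1)$ the map $a \mapsto (a+t)/(1+at)$ has derivative $(1-t^{2})/(1+at)^{2} > 0$, so it is increasing in $a$; combined with $|h'(0)| \leq \|Df(0)\|$ this produces the first claimed inequality $\|f(z)\|_Y \leq \|z\|_X(\|Df(0)\|+\|z\|_X)/(1+\|Df(0)\|\|z\|_X)$. The second inequality then follows because $\|Df(0)\| \leq 1$ (a consequence of the classical Schwarz lemma applied to the functions $h$ above, or directly of Theorem \ref{thm-h1}) and $(1+\|Df(0)\|\|z\|_X)-(\|Df(0)\|+\|z\|_X) = (1-\|z\|_X)(1-\|Df(0)\|) \geq 0$.

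For sharpness, given a prescribed value $a = \|Df(0)\| \in [0,1]$ and a point $z_0 \in B_X$, I would exhibit an explicit extremal built from the scalar Osserman extremal $\phi(\zeta) = \zeta(\zeta+a)/(1+a\zeta)$. Picking $\lambda \in T(z_0)$ and a unit vector $e \in Y$ and setting $f(z) = \phi(\lambda(z))\,e$, one checks that $f:\,B_X \to B_Y$ is holomorphic with $f(0)=0$ and $\|Df(0)\| = |\phi'(0)|\,\|\lambda\|_{X^{*}} = a$, while evaluation at $z_0$ (using $\lambda(z_0) = \|z_0\|_X$, which is real and nonnegative) forces equality. I expect no serious obstacle here: once the functional/slicing reduction is in place, everything is dictated by Theorem B, the chain rule, and the elementary monotonicity. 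The only points demanding genuine care are verifying that $h$ lands in the open disk $\U$ rather than merely its closure, and applying the monotonicity in the variable $a$ rather than $t$; the sharpness construction is then a direct, if slightly tedious, verification.
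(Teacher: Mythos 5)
Your proposal is correct and follows essentially the same route as the paper: slicing along the complex line through $z$, composing with a norming functional $l\in T(f(z))$, applying Theorem B to the resulting self-map of $\U$, and invoking the monotonicity of $a\mapsto (a+t)/(1+at)$ to pass from $|h'(0)|$ to $\|Df(0)\|$, with the identical extremal $f(z)=\phi(l_{w_0}(z))b$, $\phi(\zeta)=\zeta(\zeta+a)/(1+a\zeta)$, for sharpness. The only cosmetic difference is that the paper works with an arbitrary $b\in\partial B_Y$ and specializes $b=f(z)/\|f(z)\|_Y$ at the end, whereas you fix the norming functional at the outset; this changes nothing of substance.
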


We use $\mathbf{B}$ to denote the homogeneous unit ball of
$X=\mathbb{C}^{n}$.
It is easy to see that
 $\mathbf{B}$ is the unit ball of a finite dimensional
JB$^*$-triple $X$. Let $k\in[0,1)$ be a constant.  Denote by
$\mathscr{PH}(k)$  the set of all  pluriharmonic mappings
$f=h+\overline{g}$ of $\mathbf{B}$ into $\mathbb{C}^{n}$ with
$h(0)=g(0)=0$ and
$$\left\|\omega_{f}\right\|\leq k,
$$
where  $h$ is locally biholomorphic in $\mathbf{B}$, $g$ is
holomorphic in $\mathbf{B}$,
$\omega_{f}=Dg[Dh]^{-1}$
and
\[
\|\omega_{f}\|=\sup_{z\in \mathbf{B}, \xi\in \C^n\backslash\{0\}}\frac{\|\omega_{f}(z)
\xi\|_{e}}{\|\xi\|_{e}}.
\]
For $n\geq2$, $f=h+\overline{g}\in\mathscr{PH}(k)$ is a quasiregular
mapping if and only if $h$ is a quasiregular
mapping (see \cite{CP-2017}).
In particular, if $n=1$, then $f\in\mathscr{PH}(k)$ is a
$K$-quasiregular mapping, where $K=(1+k)/(1-k)$ (cf.
\cite{CPW-2021,Du,Va,Vu}).

Denote by $\mathscr{P}$ a set of mappings from $\mathbf{B}$ into
$\mathbb{C}^{n}$.
  For a mapping $f\in\mathscr{P}$ and a point $a\in \mathbf{B}$, we write
$\mathscr{B}_{f}(a)$ as the radius of the largest univalent Euclidean ball
centered at $f(a)$ in $f(\mathbf{B})$. Here a univalent ball in
$f(\mathbf{B})$ centered at $f(a)$ means that $f$ maps an open
subset of $\mathbf{B}$ containing the point $a$ univalently onto
this ball. Let
$$\mathscr{B}=\inf_{f\in \mathscr{P}}\sup_{a\in \mathbf{B}}\mathscr{B}_{f}(a).
$$
If $\mathscr{B}>0$ is finite, then we call $\mathscr{B}$ the Bloch
type constant of the set $\mathscr{P}$. One of the long standing
open problems of determining the precise value of Bloch type
constant of holomorphic mappings with one variable  has attracted
much attention (see \cite{B,BE,GPS,Lan,LM}). For holomorphic
mappings of several complex variables, the Bloch type constant does
not exist unless one considers the class of
functions under certain constraints. For example, consider $f_{k}(z)=(kz_{1},z_{2}/k,z_{3},\ldots,z_{n})$ for $k\in \mathbb{N}=\{1,2,\ldots\}$,
where $n\geq2$ and $z$ is in the Euclidean unit ball $\mathbb{B}^{n}$ of $\mathbb{C}^{n}$.
It is easy to see that each $f_{k}$ is univalent and $|f_{k}(0)|=\det Df_{k}(0)-1=0$. Moreover, each
$f_{k}(\mathbb{B}^{n})$ contains no ball with radius bigger than $1/k$. Hence, there does not exist an absolute
constant $r_{0}$ which can work for all $k\in \mathbb{N}$ such that
 $\{z\in\mathbb{C}^{n}:\, \|z\|_{e}<r_{0}\}$ is contained in $f_{k}(\mathbb{B}^{n})$.
For more details on  studies of the Bloch type constant of holomorphic mappings with several complex variables, we refer   to  the works of
Chen and Gauthier \cite{CG01},
Fitzgerald and Gong \cite{FG},
Graham and Varolin \cite{GV}, Hamada \cite{H19JAM},
Takahashi \cite{Ta},
and
Wu \cite{Wu}.
 On  the
studies of the Bloch type constant for the class of pluriharmonic
mappings, we refer to \cite{CG11,HK15}.

In the following, for
$f=h+\overline{g}\in \mathscr{PH}(k)$, we will use Theorems
\ref{thm-h1} and \ref{thm-h2} to investigate the ratio
$\mathscr{B}_{f}/\mathscr{B}_{h}$ and give a sharp estimate. For the
related studies of the planar harmonic mappings, see
\cite{CG11,CLW,GPS}.

\begin{Thm}\label{thm-Bloch}
 For $k\in[0,1)$, let $f=h+\overline{g}\in
\mathscr{PH}(k)$.
Then, for $z\in
\mathbf{B}$,

\be\label{eq-thm-3.0}
1-k\leq\frac{\mathscr{B}_{f}(z)}{\mathscr{B}_{h}(z)}\leq\mu_{k}\left(\frac{\|\omega_{f}(z)\|}{k}\right)\leq\mu_{k}(1)=1+k,
\ee where
$$ \begin{cases}
\mu_{k}(x)= \displaystyle 1+
k\left[\frac{1}{x}+\left(1-\frac{1}{x^{2}}\right)\log(1+x)\right] &~\mbox{ for $x \in (0,1]$},\\[3mm]
\mu_{k}(x)=\displaystyle
\lim_{x\rightarrow0^{+}}\mu_{k}(x)=1+\frac{k}{2} &~\mbox{ for $x=0$}.
\end{cases}
$$

Moreover, the left hand of  {\rm (\ref{eq-thm-3.0})} is sharp for
all $z\in \mathbf{B}$, and the right hand of {\rm
(\ref{eq-thm-3.0})} is asymptotically sharp when $k$ tends to $0$.
\end{Thm}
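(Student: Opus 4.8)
The plan is to realize $f$ locally as a conjugate-analytic perturbation of $h$ and to reduce the comparison of the two Bloch radii to a one-dimensional distortion estimate controlled by Theorem~\ref{thm-h1}. Fix $z\in\mathbf{B}$, put $w_{0}=h(z)$ and $R=\mathscr{B}_{h}(z)$, and let $H$ denote the branch of $h^{-1}$ carrying the extremal ball $B(w_{0},R)$ biholomorphically onto the univalent neighbourhood of $z$ on which $\mathscr{B}_{h}(z)$ is (essentially) attained. Because $\|\omega_{f}\|\le k<1$, the operator $Dh$ is invertible throughout $\mathbf{B}$, so $h$ has no critical points and $f$ is everywhere a local diffeomorphism, since $Df=(\mathrm{id}+\overline{\omega_{f}}\,\overline{(\cdot)})\circ Dh$. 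Setting $G=g\circ H$ on $B(w_{0},R)$, one has $DG=\omega_{f}\circ H$, hence $G$ is holomorphic with $\|DG(w)\|\le k$, and $f=\Phi\circ h$ on the chosen neighbourhood, where $\Phi(w)=w+\overline{G(w)}$ and $\Phi(w_{0})=f(z)$.

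For the lower bound I would first argue that $\Phi$ is injective on the convex ball $B(w_{0},R)$: if $\Phi(w_{1})=\Phi(w_{2})$ then $\|w_{1}-w_{2}\|=\|G(w_{1})-G(w_{2})\|\le k\|w_{1}-w_{2}\|$, forcing $w_{1}=w_{2}$. Moreover, for $w$ on the sphere $\|w-w_{0}\|=R$ one has $\|\Phi(w)-f(z)\|\ge R-\|G(w)-G(w_{0})\|\ge(1-k)R$, so $\Phi(B(w_{0},R))\supseteq B(f(z),(1-k)R)$. Since $f=\Phi\circ h$ is univalent on the neighbourhood, this exhibits a univalent ball for $f$ of radius $(1-k)R$, giving $\mathscr{B}_{f}(z)\ge(1-k)\mathscr{B}_{h}(z)$.

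The heart of the upper bound is the following distortion estimate. For a unit vector $e$ and $w=w_{0}+Re$ one has $\|\Phi(w)-f(z)\|\le R+\int_{0}^{R}\|DG(w_{0}+se)\|\,ds$. Applying Theorem~\ref{thm-h1} (equivalently Theorem~A), with target Banach space the $n\times n$ matrices under the operator norm, to the holomorphic map $\zeta\mapsto DG(w_{0}+R\zeta)/k$ of the unit ball into $\overline{B_{Y}}$ yields $\|DG(w_{0}+se)\|\le k\,(x+s/R)/(1+x\,s/R)$ with $x=\|\omega_{f}(z)\|/k\in[0,1]$. A direct integration then gives $\int_{0}^{R}\|DG(w_{0}+se)\|\,ds\le kR\int_{0}^{1}\frac{x+\sigma}{1+x\sigma}\,d\sigma=kR\Big[\frac1x+\big(1-\frac1{x^{2}}\big)\log(1+x)\Big]=:kR\,\nu(x)$, where $\mu_{k}(x)=1+k\nu(x)$; hence $\|\Phi(w)-f(z)\|\le R+kR\nu(x)=\mu_{k}(x)R$ for every boundary point, i.e. $\Phi(B(w_{0},R))\subseteq B(f(z),\mu_{k}(x)R)$. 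The monotonicity needed to close the chain $\mu_{k}(x)\le\mu_{k}(1)=1+k$ is a calculus check: one finds $\nu'(x)=\big(x^{2}-2x+2\log(1+x)\big)/x^{3}$, the numerator vanishes at $0$ with derivative $2x^{2}/(1+x)\ge0$ and is therefore nonnegative, so $\nu$ increases from $\nu(0^{+})=\tfrac12$ to $\nu(1)=1$, whence $\mu_{k}(0^{+})=1+\tfrac k2$ and $\mu_{k}(1)=1+k$.

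I expect the genuine obstacle to be the passage from the containment $\Phi(B(w_{0},R))\subseteq B(f(z),\mu_{k}(x)R)$ to the inequality $\mathscr{B}_{f}(z)\le\mu_{k}(x)\mathscr{B}_{h}(z)$ for the \emph{largest} univalent ball of $f$. The construction above produces one univalent ball for $f$, but $\Phi$ is only a local correspondence, and when the extremal ball of $h$ is limited by a self-overlap of $h$ rather than by $\partial h(\mathbf{B})$, the matching obstruction for $f$ sits at a different image point (because $f(p_{1})=f(p_{2})$ need not follow from $h(p_{1})=h(p_{2})$), so the extremal ball of $f$ is a priori not transported from that of $h$. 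I would resolve this by a continuation argument: since neither $h$ nor $f$ has critical points, the branch $F=H\circ\Phi^{-1}$ of $f^{-1}$ continues until it reaches the image under $\Phi$ of the first obstruction of $h$ on $\partial B(w_{0},R)$, and the displayed estimate bounds the distance from $f(z)$ to that point by $\mu_{k}(x)R$; the delicate part is to verify that this is genuinely the first obstruction for $f$. Finally, sharpness of the left inequality for every $z$ would be obtained by taking $g=kh$, so that $\omega_{f}\equiv kI$ and $\Phi(w)=w+k\overline{w}$ is a global linear homeomorphism whose smallest boundary displacement over the sphere equals $(1-k)R$; the right inequality is asymptotically sharp as $k\to0$ through an explicit family for which the Schwarz–Pick estimate along the extremal ray is saturated.
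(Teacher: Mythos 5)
Your factorization $f=\Phi\circ h$ on the extremal ball of $h$, with $\Phi=\mathrm{id}+\overline{G}$ and $DG=\omega_f\circ H$, is sound, and two of your three main steps are correct and complete. The lower bound is fine: injectivity of $\Phi$ on the convex ball plus the boundary-displacement/degree argument genuinely exhibits a univalent ball of radius $(1-k)R$ for $f$, and since $\mathscr{B}_f(z)$ is a supremum this yields $\mathscr{B}_f(z)\geq(1-k)\mathscr{B}_h(z)$ directly (the paper instead compares image lengths along the extremal path of $f$, using the formulas for $Df^{-1}$ and $\overline{D}f^{-1}$). Your distortion estimate is also correct, and in fact cleaner than the paper's: applying Theorem \ref{thm-h1} to $\zeta\mapsto DG(w_0+R\zeta)/k$ on the Euclidean ball works at an arbitrary center $z$ in one stroke, whereas the paper argues at the origin, needs Theorem \ref{thm-h2} to get $\|\ell_{\xi_1}(t)\|_X\leq t$, and then invokes homogeneity of $\mathbf{B}$ to move the center; your integral $\int_0^1\frac{x+\sigma}{1+x\sigma}\,d\sigma$ and the monotonicity check for $\mu_k$ agree with the paper's.

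The genuine gap is exactly the one you flagged and left open: passing from $\Phi(B(w_0,R))\subseteq B(f(z),\mu_k(x)R)$ to $\mathscr{B}_f(z)\leq\mu_k(x)\mathscr{B}_h(z)$. Your containment controls \emph{one} univalent ball of $f$ (the transported one), while $\mathscr{B}_f(z)$ is a supremum over all of them, and your proposed continuation argument ends with an admittedly unverified claim about ``the first obstruction for $f$.'' The missing ingredient is the paper's Step 1: since $\|\omega_f\|\leq k<1$ forces $\det J_f\neq 0$, both $h$ and $f$ are locally univalent, and for a locally univalent mapping the largest univalent ball cannot be limited by a self-overlap (if $z_0\in\partial V$, $z_1\in V$ and $f(z_0)=f(z_1)$, local univalence at $z_0$ and $z_1$ contradicts injectivity of $f$ on $V$); hence $\mathscr{B}_f(z)$ equals the distance to the obstruction set of $f$ and admits the representation \eqref{eq-bf0},
\begin{equation*}
\mathscr{B}_f(z)=\inf_{\gamma}\int_{\gamma}\|df\|_e,
\end{equation*}
the infimum running over paths escaping to $\partial\mathbf{B}$. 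With this lemma, your own estimate finishes the proof: the $H$-lift of the radius from $w_0$ toward the nearest obstruction of $h$ is an escaping path (by the same lemma applied to $h$), and its $f$-image length is $R+\int_0^R\|DG\|\,ds\leq\mu_k(x)R$, so ``any escaping path gives an upper bound'' closes the chain. Without some such no-interior-obstruction lemma the upper bound does not follow, because a priori the maximal univalent ball of $f$ could come from a branch of $f^{-1}$ other than $H\circ\Phi^{-1}$. A smaller point: your sharpness example $g=kh$ needs $h$ to be chosen as the paper does ($h(z)=Uz$ with $U$ unitary sending a nearest boundary point of $\mathbf{B}$ to a pure imaginary vector), so that the direction of $h$'s nearest obstruction is one in which $w\mapsto w+k\overline{w}$ contracts by exactly $1-k$; for a generic configuration the minimal-displacement direction and the nearest-obstruction direction do not coincide, and the ratio then exceeds $1-k$.
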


In \cite{Wu},  Wu  generalized the classical Schwarz lemma of
holomorphic mappings to higher dimension. Burns and Krantz \cite{BK}
established a new version Schwarz lemma at the boundary, and
obtained a new rigidity result for holomorphic mappings. Later,
Huang \cite{Hu} further strengthened the result of  Burns-Krantz for
holomorphic mappings with an interior fixed point. Recently, Liu and Tang \cite{L-T} obtained a
Schwarz lemma at the boundary of holomorphic mappings from a Levi strongly pseudoconvex domain into itself.
See
\cite{GHK-JAM, H18,Hu-1,T} for more details on this line. In the following, we
will apply Theorem \ref{thm-h2} to establish a  new version Schwarz
lemma at the boundary, which is a generalization of Theorem
C.

\begin{Thm}\label{thm-3h}
Let $B_{X}$ and $B_{Y}$ be the unit balls of the complex
Banach spaces $X$ and $Y$, respectively.
Suppose that $f$ is a holomorphic mapping of $B_X$ into $B_Y$.
  If
$f(0)=0$ and $f$ is holomorphic at $b\in\partial B_X$
$($or more generally, the radial derivative $Df(b)b$ exists at $b\in\partial B_X$$)$
with
$\|f(b)\|_{Y}=1$, then \beqq \|Df(b)b\|_Y\geq\frac{2}{1+\|Df(0)\|}.
\eeqq
This inequality is sharp
with equality possible for each value of $\| Df(0)\|$.
\end{Thm}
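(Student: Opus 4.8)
The plan is to reduce the boundary estimate to a one-variable comparison along the radial segment toward $b$, mirroring Osserman's derivation of Theorem C from Theorem B but with the scalar Schwarz--Pick bound replaced by Theorem \ref{thm-h2}. Write $a=\|Df(0)\|$ and, for $r\in(0,1)$, put $z=rb$, so that $\|z\|_X=r$. Theorem \ref{thm-h2} then yields
\[
\|f(rb)\|_Y\le g(r),\qquad g(r):=\frac{a+r}{1+ar}\,r .
\]
The decisive feature is that the comparison function $g$ satisfies $g(1)=1$, matching the boundary hypothesis $\|f(b)\|_Y=1$, so the estimate becomes tight as $r\to1^-$.

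First I would compute the left derivative of $g$ at $r=1$. Writing $g(r)=(ar+r^2)/(1+ar)$, a direct differentiation simplifies the numerator at $r=1$ to $2(1+a)$, giving
\[
g'(1)=\frac{2(1+a)}{(1+a)^2}=\frac{2}{1+\|Df(0)\|}.
\]
Since $f(rb)\in B_Y$ for $r<1$ we have $\|f(rb)\|_Y<1=\|f(b)\|_Y$; combining $\|f(rb)\|_Y\le g(r)$ with $g(1)=\|f(b)\|_Y$ gives, for every $r\in(0,1)$, the one-sided comparison
\[
\frac{\|f(b)\|_Y-\|f(rb)\|_Y}{1-r}\ge\frac{g(1)-g(r)}{1-r}.
\]
The reverse triangle inequality then transfers this scalar decay to the vector difference:
\[
\left\|\frac{f(b)-f(rb)}{1-r}\right\|_Y
\ge\frac{\|f(b)\|_Y-\|f(rb)\|_Y}{1-r}
\ge\frac{g(1)-g(r)}{1-r}.
\]

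Next I would let $r\to1^-$. By hypothesis the radial derivative $Df(b)b=\lim_{r\to1^-}\bigl(f(b)-f(rb)\bigr)/(1-r)$ exists, so by continuity of the norm the left-hand side converges to $\|Df(b)b\|_Y$, while the right-hand side converges to $g'(1)$; the inequality survives the limit and yields $\|Df(b)b\|_Y\ge 2/(1+\|Df(0)\|)$. The main (and essentially only) delicate point is precisely this passage from the scalar radial decay of $\|f\|_Y$ to the norm of the vector radial derivative: the reverse triangle inequality supplies exactly the right one-sided bound, and the assumed existence of the radial derivative guarantees that both sides possess genuine limits, so no $\liminf$ subtlety intervenes.

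Finally, for sharpness I would exhibit, for each prescribed $a=\|Df(0)\|\in[0,1)$, an extremal mapping obtained by pushing forward the one-variable Osserman extremal. Using Hahn--Banach, choose $\ell\in T(b)$ (so $\|\ell\|_{X^*}=1$ and $\ell(b)=1$) and a unit vector $u\in Y$, and set
\[
f(z)=\phi(\ell(z))\,u,\qquad \phi(\zeta)=\zeta\,\frac{\zeta+a}{1+a\zeta}.
\]
Since $\phi$ maps $\U$ into $\U$ and $|\ell(z)|\le\|z\|_X<1$ on $B_X$, this $f$ maps $B_X$ into $B_Y$ with $f(0)=0$; moreover $\phi$ extends holomorphically past $\overline{\U}$ (its pole lies at $-1/a$, of modulus $>1$), so $f$ is holomorphic at $b$. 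One then checks $Df(0)z=a\,\ell(z)u$, whence $\|Df(0)\|=a$; that $f(b)=\phi(1)u=u$, so $\|f(b)\|_Y=1$; and, along $z=rb$, that $f(rb)=\phi(r)u$ gives $Df(b)b=\phi'(1)u$ with $|\phi'(1)|=2/(1+a)$, so $\|Df(b)b\|_Y=2/(1+\|Df(0)\|)$. This realizes equality for every admissible value of $\|Df(0)\|$.
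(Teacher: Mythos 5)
Your proof is correct and follows essentially the same route as the paper's: both combine the bound of Theorem \ref{thm-h2} along the radius $rb$ with the reverse triangle inequality and then pass to the limit $r\to 1^-$ (the paper computes the quotient $\frac{1+r}{1+\|Df(0)\|r}$ explicitly where you identify the limit as $g'(1)$, which is the same computation), and your extremal map $f(z)=\phi(\ell(z))u$ is exactly the paper's $f(z)=\frac{l_b(z)+r}{1+rl_b(z)}l_b(z)y$. The only cosmetic difference is that the paper allows $\|Df(0)\|=1$ in the sharpness statement, which your construction also handles since $\phi$ then reduces to $\phi(\zeta)=\zeta$.
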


In particular, if we replace $B_{X}$ and $B_{Y}$ by a balanced domain and
a finite dimensional bounded symmetric domain in Theorem \ref{thm-3h},
respectively, then we obtain a better estimate
 (cf. \cite{HKo}).
 Before we present the next result, let us recall some definitions.

Let $\B_Y$ be a bounded symmetric domain
realized as the open unit ball of a finite dimensional JB$^*$-triple
$Y$.
We recall a constant $c(\B_Y)$ defined in \cite{HHK}.
Let $h_0$ be the Bergman metric on $\B_Y$ at $0$
and let
\[
c(\B_Y)=\frac{1}{2}\sup_{x,y\in \B_Y}|h_0(x,y)|.
\]

\noindent It follows from \cite[Ineq. (2.3)]{CHHK16} that
$$\frac{\dim Y+r}{2}\leq c(\B_Y)\leq\dim Y,$$ where $r$ is the rank of $Y$.

An element $x$ in a JB$^*$-triple $Y$ is called a tripotent if $x$ satisfies $\{x,x,x\}=x$. If
two tripotents $x$ and $y$ satisfy $2x\square y=0$, then  $x$ and $y$   are said to be orthogonal. Obviously,
orthogonality is a symmetric relation. A tripotent $x$ is said to be maximal if any tripotent  which is
orthogonal to $x$ is $0$.

\begin{Thm}\label{thm-Sy}
Suppose that
 $G$ is a balanced domain in a complex Banach space $X$ and $\B_Y$ is a bounded symmetric domain
realized as the open unit ball of a finite dimensional JB$^*$-triple
$Y$.
Let $\Gamma \subset \partial\B_Y$ be the set of maximal tripotents of $Y$, and
let
$f:\, G\to \B_Y$ be a holomorphic mapping.
Also let $f$ be holomorphic at $z=\alpha\in \partial G$ and $f(\alpha)=\beta\in \Gamma$.
\begin{enumerate}
	\item[{\rm (i)}] We have
\begin{equation}
\label{eq-Schwarz-symmetric}
\frac{1}{2c(\B_Y)}
h_0(Df(\alpha)\alpha, \beta)\geq
\frac{2\left| 1-\frac{1}{2c(\B_Y)}h_0(f(0),\beta)\right|^2}
{1-\left|\frac{1}{2c(\B_Y)}h_0(f(0),\beta)\right|^2+\| Df(0)\alpha\|_Y},
\end{equation}
where $h_0$ is the Bergman metric on $\B_Y$ at $0$.

	\item[{\rm (ii)}] Moreover, if $f(0)=0$, then we have
\begin{equation}
\label{eq-Schwarz-symmetric2}
\frac{1}{2c(\B_Y)}
h_0(Df(\alpha)\alpha, \beta)\geq
\frac{2}
{1+\| Df(0)\alpha\|_Y}.
\end{equation}

	\item[{\rm (iii)}] In particular, if $G=B_{X}$ is the unit ball of $X$,
then the inequalities
$(\ref{eq-Schwarz-symmetric})$ and
$(\ref{eq-Schwarz-symmetric2})$
are sharp
with equality possible for each values of
$$a=\frac{1}{2c(\B_Y)} h_0(f(0),\beta),~b=\frac{1}{2c(\B_Y)} h_0(Df(0)\alpha,\beta)$$
with $|b|\leq 1-|a|^2$.
\end{enumerate}
\end{Thm}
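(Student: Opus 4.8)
The plan is to reduce the statement to the one–variable boundary Schwarz lemma (Theorem C) by slicing $f$ along the complex line through $\alpha$ and pairing the image with the Bergman metric against the tripotent $\beta$. I first set $\phi(w)=\frac{1}{2c(\B_Y)}h_0(w,\beta)$; since $h_0$ is $\C$–linear in its first slot, $\phi\in Y^{*}$. From the definition of $c(\B_Y)$ one has $|h_0(x,y)|\le 2c(\B_Y)$ on $\overline{\B_Y}\times\overline{\B_Y}$, and the fact that the extremizers of $c(\B_Y)$ are exactly the maximal tripotents (cf. \cite{CHHK16,HHK}) gives $h_0(\beta,\beta)=2c(\B_Y)$. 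Hence $\phi(\beta)=1$, $|\phi(w)|\le 1$ for $w\in\overline{\B_Y}$, and, by homogeneity together with the Cauchy--Schwarz inequality for the positive Hermitian form $h_0$, $|\phi(v)|\le\|v\|_Y$ for every $v\in Y$.

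Because $G$ is balanced and $\alpha\in\partial G$, the disk $\{\zeta\alpha:\zeta\in\U\}$ lies in $G$, so $g(\zeta):=\phi(f(\zeta\alpha))$ defines a holomorphic self–map $g:\U\to\U$ (here $|g(\zeta)|\le\|f(\zeta\alpha)\|_Y<1$). Its data are $g(0)=\phi(f(0))=:a$ and $g'(0)=\phi(Df(0)\alpha)$, with $|g'(0)|\le\|Df(0)\alpha\|_Y$, while $g(1)=\phi(\beta)=1$ and the radial derivative equals $g'(1)=\phi(Df(\alpha)\alpha)=\frac{1}{2c(\B_Y)}h_0(Df(\alpha)\alpha,\beta)$. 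By the Julia--Wolff--Carath\'eodory theory this radial derivative is a positive real number, which explains why the left–hand sides of (\ref{eq-Schwarz-symmetric}) and (\ref{eq-Schwarz-symmetric2}) are real.

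The core is the scalar estimate: for holomorphic $g:\U\to\U$ with $g(1)=1$ and existing radial derivative, $g'(1)\ge\frac{2\,|1-g(0)|^{2}}{1-|g(0)|^{2}+|g'(0)|}$. I would obtain it by conjugating with the automorphism $\sigma_a(w)=\frac{w-a}{1-\bar a w}$, $a=g(0)$: then $\tilde g=\sigma_a\circ g$ maps $\U$ into $\U$ with $\tilde g(0)=0$ and $|\tilde g(1)|=1$, so Theorem C gives $|\tilde g'(1)|\ge 2/(1+|\tilde g'(0)|)$; inserting $\tilde g'(0)=g'(0)/(1-|a|^{2})$ and $|\tilde g'(1)|=\frac{1-|a|^{2}}{|1-a|^{2}}\,g'(1)$ and solving for $g'(1)$ yields the claim. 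Taking $a=\frac{1}{2c(\B_Y)}h_0(f(0),\beta)$ and using $|g'(0)|\le\|Df(0)\alpha\|_Y$ to enlarge the denominator proves (\ref{eq-Schwarz-symmetric}); the hypothesis $f(0)=0$ forces $a=0$ and gives (\ref{eq-Schwarz-symmetric2}).

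For the sharpness in (iii) with $G=B_X$, I would exhibit an explicit extremal. Pick $p\in X^{*}$ with $\|p\|=1$ and $p(\alpha)=1$ (Hahn--Banach), let $b_0:\U\to\U$ be the extremal Blaschke map realizing equality in the scalar estimate with $b_0(0)=a$ and $b_0'(0)=b$ — available precisely when $|b|\le 1-|a|^{2}$ by the sharpness of Theorem C — and put $f(z)=b_0(p(z))\,\beta$. Then $f:B_X\to\B_Y$ is holomorphic with $f(\alpha)=\beta$, while $g=\phi\circ f(\,\cdot\,\alpha)=b_0$ and $Df(0)\alpha=b\beta$, so $\|Df(0)\alpha\|_Y=|b|=|g'(0)|$ and every inequality used collapses to equality. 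I expect the main obstacle to be the structural input $h_0(\beta,\beta)=2c(\B_Y)$ for maximal tripotents (equivalently, that the extremizers of $c(\B_Y)$ are the maximal tripotents), together with the bookkeeping in constructing $b_0$ so that simultaneously $b_0(1)=1$ and Theorem C is saturated for the prescribed pair $(a,b)$.
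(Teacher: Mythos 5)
Your proposal is correct and follows the same skeleton as the paper's proof: you form the identical scalar slice $F(\zeta)=\frac{1}{2c(\B_Y)}h_0(f(\zeta\alpha),\beta)$, rely on the same structural fact that $h_0(\beta,\beta)=2c(\B_Y)$ for a maximal tripotent $\beta$ (the paper obtains $F(1)=1$ from \cite[Theorem 6.5]{L77}, you cite the extremizer characterization from \cite{HHK,CHHK16} --- same input, different reference), pass to the boundary point $\zeta=1$, enlarge the denominator via $|F'(0)|\leq \|Df(0)\alpha\|_Y$, and use the same extremal family $f(z)=\varphi(l_{\alpha}(z))\beta$ for sharpness. The one genuine difference is the scalar ingredient: the paper invokes Zhu's boundary Schwarz lemma (Theorem L, \cite[Theorem 1.1]{Z18}) as a black box, whereas you rederive precisely that inequality from Osserman's Theorem C by conjugating with $\sigma_a(w)=(w-a)/(1-\bar{a}w)$; your bookkeeping ($\tilde g'(0)=g'(0)/(1-|a|^2)$ and $|\tilde g'(1)|=\frac{1-|a|^2}{|1-a|^2}\,g'(1)$, using that $g'(1)>0$ by Julia--Wolff--Carath\'eodory) is correct, so your argument is self-contained modulo Theorem C, and you also make explicit two points the paper glosses over, namely $|\phi(v)|\leq\|v\|_Y$ (hence $F$ maps $\U$ strictly into $\U$) and the realness of the left-hand sides. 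One caveat on part (iii): you assert an extremal $b_0$ with $b_0(0)=a$, $b_0'(0)=b$ as an exact complex value, and equality, ``precisely when $|b|\leq 1-|a|^2$''. That is not achievable for an arbitrary phase of $b$: by Unkelbach's refinement quoted in Section \ref{sec2}, equality in Theorem C forces the relevant derivative at $0$ to be nonnegative, so only $a$ and the modulus $|b|$ can be prescribed freely, the phase of $F'(0)$ being determined (the paper's own extremal has $F'(0)=\epsilon|b|$ with $\epsilon=(1-a)/(1-\bar{a})$). Since the theorem's statement of (iii) and Theorem L share this looseness, this is not a gap relative to the paper, but your sharpness sentence should be weakened to prescribing $a$ and $|b|$.
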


We remark that Theorem \ref{thm-Sy} is an improvement and generalization of \cite[Theorem 3.1]{L-T-2015} and \cite[Theorem 3.1]{L-T-2020}
(cf. \cite[Theorem 1.5]{Z18}).
By using arguments similar to those in the proof of Theorem \ref{thm-Sy}, we have the following theorem
(cf. \cite[Theorem 1.5]{Z18}).
We omit the proof.

\begin{Thm}\label{thm-Hilbert}
Suppose that
 $G$ is a balanced domain in a complex Banach space $X$ and $\B_H$ is unit ball of a complex Hilbert space
$H$ with inner product $\langle \cdot, \cdot \rangle$.
Let
$f:\, G\to \B_H$ be a holomorphic mapping.
If $f$ is holomorphic at $z=\alpha\in \partial G$ and $f(\alpha)=\beta\in \partial \B_H$,
then
\begin{equation}
\label{eq-Schwarz-Hilbert}
\langle Df(\alpha)\alpha, \beta\rangle\geq
\frac{2\left| 1-\langle f(0),\beta\rangle \right|^2}
{1-\left|\langle f(0),\beta \rangle \right|^2+\| Df(0)\alpha\|_H}.
\end{equation}

Moreover, if $f(0)=0$, then we have
\begin{equation}
\label{eq-Schwarz-Hilbert2}
\langle Df(\alpha)\alpha, \beta\rangle
\geq
\frac{2}
{1+\| Df(0)\alpha\|_H}.
\end{equation}

In particular, if $G=B_{X}$ is the unit ball of $X$,
then the inequalities
$(\ref{eq-Schwarz-Hilbert})$ and
$(\ref{eq-Schwarz-Hilbert2})$
are sharp
with equality possible for each values of $\langle f(0),\beta\rangle$ and $\| Df(0)\alpha\|_H$
with $\| Df(0)\alpha\|_H \leq 1-|\langle f(0),\beta\rangle|^2$

\end{Thm}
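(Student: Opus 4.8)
The plan is to reduce the Banach/Hilbert statement to the one–dimensional boundary Schwarz lemma (Theorem C) by restricting $f$ to the complex line through $\alpha$. Since $G$ is balanced and $\alpha\in\partial G$, the Minkowski functional gives $\zeta\alpha\in G$ for every $\zeta\in\U$, so
\[
\phi(\zeta)=\langle f(\zeta\alpha),\beta\rangle,\qquad \zeta\in\U,
\]
is a well–defined holomorphic function. As $\|\beta\|_H=1$ and $f(\zeta\alpha)\in\B_H$, Cauchy–Schwarz gives $|\phi(\zeta)|\le\|f(\zeta\alpha)\|_H<1$, so $\phi\colon\U\to\U$. Writing $a=\phi(0)=\langle f(0),\beta\rangle$ and $b=\phi'(0)=\langle Df(0)\alpha,\beta\rangle$, one has $\phi(1)=\langle\beta,\beta\rangle=1$ and, by the definition of the radial derivative, $\phi'(1)=\langle Df(\alpha)\alpha,\beta\rangle$.

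Next I would move the interior value $a$ to the origin. Put $\sigma_a(w)=\frac{w-a}{1-\overline{a}w}$ and $\psi=\sigma_a\circ\phi$, so that $\psi\colon\U\to\U$, $\psi(0)=0$, $|\psi(1)|=1$, and $\psi$ still has a radial derivative at $1$. Applying Theorem C to $\psi$ yields $|\psi'(1)|\ge 2/(1+|\psi'(0)|)$. Using $\sigma_a'(w)=(1-|a|^2)/(1-\overline{a}w)^2$ together with the chain rule gives $\psi'(0)=b/(1-|a|^2)$ and $\psi'(1)=(1-|a|^2)\phi'(1)/(1-\overline{a})^2$; substituting and simplifying (and recalling $|1-\overline{a}|=|1-a|$) converts the inequality into
\[
|\phi'(1)|\ge\frac{2|1-a|^2}{1-|a|^2+|b|}.
\]
Since $|b|=|\langle Df(0)\alpha,\beta\rangle|\le\|Df(0)\alpha\|_H$ by Cauchy–Schwarz, replacing $|b|$ by $\|Df(0)\alpha\|_H$ enlarges the denominator and hence only weakens the bound, which produces exactly $(\ref{eq-Schwarz-Hilbert})$. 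When $f(0)=0$ we have $a=0$ and the estimate collapses to $(\ref{eq-Schwarz-Hilbert2})$.

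To pass from this bound on the modulus to the stated scalar inequality I would invoke the Julia–Wolff–Carath\'eodory theorem at the boundary fixed point $\phi(1)=1$: the (nontangential, hence radial) derivative $\phi'(1)$ is a nonnegative real number, so $\langle Df(\alpha)\alpha,\beta\rangle=\phi'(1)=|\phi'(1)|$ and the inequality holds as written. Verifying this reality and positivity under the weaker hypothesis that only the radial derivative $Df(\alpha)\alpha$ is assumed to exist (rather than full holomorphy of $f$ at $\alpha$) is the one delicate point, and I expect it to be the main obstacle of the argument.

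Finally, for the sharpness claim with $G=B_X$, I would lift the extremal disk map. Choose $l\in T(\alpha)$, fix $c\in[0,1)$ and the unimodular constant $\lambda=(1-a)/(1-\overline{a})$, and let $\psi_0(\zeta)=\lambda\,\zeta\,\frac{\zeta+c}{1+c\zeta}$, the Blaschke product extremal in Theorem C. Then
\[
f(z)=\bigl(\sigma_a^{-1}\circ\psi_0\bigr)(l(z))\,\beta
\]
maps $B_X$ into $\B_H$, satisfies $f(\alpha)=\beta$, $\langle f(0),\beta\rangle=a$, and $\|Df(0)\alpha\|_H=(1-|a|^2)c$, and running the computation above in reverse turns every inequality into an equality. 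Letting $a$ range over $\U$ and $c$ over $[0,1)$ then realizes all admissible pairs subject to $\|Df(0)\alpha\|_H\le 1-|\langle f(0),\beta\rangle|^2$, establishing sharpness.
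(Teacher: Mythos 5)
Your proof is correct, and its skeleton coincides with the paper's: the paper omits the proof of this theorem, saying it follows the proof of Theorem \ref{thm-Sy}, which scalarizes via $F(\zeta)=\langle f(\zeta\alpha),\beta\rangle$ (there with the Bergman metric $h_0$ in place of the inner product), applies the one-variable boundary Schwarz lemma of Zhu (Theorem L) at the boundary fixed point $F(1)=1$, uses $|F'(0)|\le\|Df(0)\alpha\|$, and gets sharpness from mappings of the form $\varphi(l_\alpha(z))\beta$ built on Zhu's extremal function. Where you genuinely differ is in the one-dimensional ingredient: instead of quoting Theorem L as a black box, you re-derive it from Osserman's Theorem C by conjugating with the M\"obius map $\sigma_a$, and you recover the reality and positivity of $\phi'(1)$ (which Theorem L asserts implicitly) from the Julia--Wolff--Carath\'eodory theorem. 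This buys self-containedness at the cost of importing JWC; the paper's route is shorter because Theorem L already packages both the lower bound and the positivity of the derivative at the fixed point. Two small remarks: the ``delicate point'' you flag about the radial-derivative-only hypothesis is moot here, since unlike Theorem \ref{thm-3h} this theorem assumes $f$ is holomorphic at $\alpha$, so $\phi$ is holomorphic at $1$ and JWC applies directly; and your extremal family $\sigma_a^{-1}\circ\psi_0$ is exactly Zhu's extremal function (composed with $l$), but to realize the borderline case $\|Df(0)\alpha\|_H=1-|\langle f(0),\beta\rangle|^2$ you should allow $c=1$ (giving $\psi_0(\zeta)=\lambda\zeta$), not only $c\in[0,1)$.
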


The proofs of Theorems \ref{thm-h1}$\sim$\ref{thm-Sy}
will be presented in part I of Section \ref{sec-4}.

\section{Schwarz  type lemmas of pluriharmonic mappings and their applications}

Heinz in his classical paper \cite{Hei} showed that the following version of Schwarz  Lemma for harmonic mappings: If $f$ is a harmonic mapping of
 $\U$ into itself with
$f(0)=0$, then
\be\label{eq-Heinz}|f(z)|\leq\frac{4}{\pi}\arctan|z|\ee
for $z\in\U$. In
2011, Chen and Gauthier generalized (\ref{eq-Heinz}) into the
following form.

\begin{ThmD}{\rm (\cite[Theorem 4]{CG11})}\label{Thm-A-1}
Let $f$ be a pluriharmonic mapping of the Euclidean unit ball $\mathbb{B}^{n}$
into the Euclidean unit ball $\mathbb{B}^{m}$ such that $f(0)=0$, where $m$ is a
positive integer. Then, for all $z\in\mathbb{B}^{n}$,
$$\|f(z)\|_{e}\leq\frac{4}{\pi}\arctan\|z\|_{e}.$$
This estimate is sharp.
\end{ThmD}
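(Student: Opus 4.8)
The plan is to reduce the statement to the classical planar Heinz inequality (\ref{eq-Heinz}) by a single linearization that collapses the $n$-dimensional domain and the $m$-dimensional target to the scalar case at the same time. Fix $z\in\IB^{n}\setminus\{0\}$; if $f(z)=0$ there is nothing to prove, so assume $f(z)\neq 0$ and set $a=f(z)/\|f(z)\|_{e}\in\C^{m}$, a unit vector, together with $u=z/\|z\|_{e}\in\C^{n}$. The device driving the whole argument is the linear functional $l(w)=\langle w,a\rangle$ determined by the fixed target value $f(z)$, where $\langle\cdot,\cdot\rangle$ is the Hermitian inner product, linear in the first slot.

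First I would introduce the one-variable function $G:\U\to\C$ defined by
\[
G(\zeta)=\langle f(\zeta u),a\rangle,\qquad \zeta\in\U,
\]
and check that $G$ is a complex-valued harmonic function mapping $\U$ into $\U$ with $G(0)=0$. Harmonicity is immediate from the definition of pluriharmonicity: the map $\zeta\mapsto \zeta u$ is a holomorphic curve into $\IB^{n}$ and $l=\langle\cdot,a\rangle$ is a continuous linear functional on $\C^{m}$, so $G$ is the restriction of $l\circ f$ to this curve and is therefore harmonic. Since $f(0)=0$ we get $G(0)=0$, while the Cauchy--Schwarz inequality gives
\[
|G(\zeta)|=|\langle f(\zeta u),a\rangle|\le \|f(\zeta u)\|_{e}\,\|a\|_{e}=\|f(\zeta u)\|_{e}<1,
\]
because $f$ maps $\IB^{n}$ into $\IB^{m}$; hence $G(\U)\subset\U$.

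Next, evaluating at $\zeta_{0}=\|z\|_{e}\in(0,1)$ gives $\zeta_{0}u=z$, so that $G(\zeta_{0})=\langle f(z),a\rangle=\|f(z)\|_{e}$ by the choice of $a$. Applying the classical Heinz lemma (\ref{eq-Heinz}) to $G$ then yields
\[
\|f(z)\|_{e}=|G(\zeta_{0})|\le \frac{4}{\pi}\arctan|\zeta_{0}|=\frac{4}{\pi}\arctan\|z\|_{e},
\]
which is the desired inequality. For sharpness I would carry the planar extremal back up to the ball: if $F=h+\overline{g}:\U\to\U$ is the extremal harmonic function with $F(0)=0$ realizing equality in (\ref{eq-Heinz}) at a prescribed $r\in(0,1)$, then the mapping $f(\xi)=(F(\langle \xi,u\rangle),0,\dots,0)$ is pluriharmonic on $\IB^{n}$ (its first coordinate is $h(\langle\xi,u\rangle)+\overline{g(\langle\xi,u\rangle)}$, a holomorphic plus conjugate-holomorphic map, since $\langle\xi,u\rangle$ is holomorphic in $\xi$), maps $\IB^{n}$ into $\IB^{m}$ because $|\langle\xi,u\rangle|\le\|\xi\|_{e}<1$, and satisfies $f(0)=0$; at $\xi=ru$ one has $\|f(ru)\|_{e}=|F(r)|=\tfrac{4}{\pi}\arctan r=\tfrac{4}{\pi}\arctan\|ru\|_{e}$, so the bound is attained.

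The only place where any care is needed is the simultaneous collapse of both dimensions, and this is where I expect the conceptual (rather than computational) difficulty to lie: the slicing direction $u$ disposes of the domain, while choosing the functional $l=\langle\cdot,a\rangle$ aligned with $f(z)$ is exactly what lets Cauchy--Schwarz preserve the value $\|f(z)\|_{e}$ at $\zeta_{0}$ while still forcing $|G|<1$ throughout $\U$. Once $G$ is recognized as a scalar harmonic self-map of $\U$ fixing the origin, the conclusion follows at once from (\ref{eq-Heinz}), and there is no genuine higher-dimensional obstruction beyond selecting this linearization correctly.
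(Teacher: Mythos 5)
Your proof is correct and is essentially the paper's own method: Theorem D itself is only quoted from \cite{CG11}, but your reduction (slice along $\zeta\mapsto\zeta u$, compose with the linear functional $\langle\cdot,a\rangle$ aligned with $f(z)$, apply the planar Heinz inequality \eqref{eq-Heinz}, and pull back a rank-one extremal for sharpness) is exactly the argument the paper uses to prove its generalization, Theorem \ref{Har-1}, where the functional $l_u\in T(u)$ with $u=a/\|a\|_Y$ plays the role of your $\langle\cdot,a\rangle$ and the extremal $f(z)=\Phi(l_{w_0}(z))b$ matches your $(F(\langle\xi,u\rangle),0,\dots,0)$. There are no gaps.
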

Hamada and Kohr \cite{HK15} extended Theorem D to
pluriharmonic mappings of the unit ball of a complex Banach space
$X$ into the unit ball $\mathbb{B}^{n}_{a}$ of $\C^n$ with respect to
an arbitrary norm $\| \cdot \|_a$
on $\C^n$
as follows.

\begin{ThmE}{\rm (\cite[Theorem 4.1]{HK15})}\label{Thm-A-2}
Let $B_{X}$ be the unit ball of a complex Banach space $X$,
{$\mathbb{B}^{n}_{a}$} be the unit ball of $\C^n$ with respect to
an arbitrary norm $\| \cdot \|_a$ on $\C^n$ and
{$f:\, B_{X}\rightarrow\mathbb{B}^{n}_{a}$} be a pluriharmonic
mapping such that $f(0)=0$. Then, for  $z\in B_{X}$, the following sharp inequality
$$\|f(z)\|_{a}\leq\frac{4}{\pi}\arctan\|z\|_{X}$$ holds.
\end{ThmE}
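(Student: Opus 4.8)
The plan is to reduce the Banach-space statement to the classical Heinz--Schwarz lemma \eqref{eq-Heinz} on the unit disk, by restricting $f$ to the complex line through a prescribed point and composing with a supporting functional on the target. Fix $z\in B_X$; the case $z=0$ is trivial, so assume $z\neq 0$, put $r=\|z\|_X\in(0,1)$ and let $u=z/\|z\|_X$ be the corresponding unit vector, so that $z=ru$. Write $w=f(z)\in\C^n$. Since $(\C^n,\|\cdot\|_a)$ is a (finite-dimensional) Banach space, the Hahn--Banach theorem supplies a \emph{complex-linear} functional $l_w$ on $\C^n$ with $l_w(w)=\|w\|_a$ and $\|l_w\|_{a^{\ast}}=1$, where $\|\cdot\|_{a^{\ast}}$ denotes the dual norm.

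The next step is to pass to a single complex variable. Consider
\[
F(\zeta)=l_w\bigl(f(\zeta u)\bigr),\qquad \zeta\in\U .
\]
Because $\|\zeta u\|_X=|\zeta|<1$, the map $\zeta\mapsto\zeta u$ is a holomorphic curve from $\U$ into $B_X$, and since $l_w\in(\C^n)^{\ast}$, the definition of pluriharmonicity makes $F$ a complex-valued harmonic function on $\U$. Three facts follow at once: $F(0)=l_w(f(0))=0$; for every $\zeta\in\U$,
\[
|F(\zeta)|=\bigl|l_w(f(\zeta u))\bigr|\le\|l_w\|_{a^{\ast}}\,\|f(\zeta u)\|_a=\|f(\zeta u)\|_a<1,
\]
so $F$ maps $\U$ into $\U$; and, evaluating at the interior point $\zeta=r$,
\[
F(r)=l_w\bigl(f(ru)\bigr)=l_w(f(z))=l_w(w)=\|w\|_a=\|f(z)\|_a .
\]

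Now $F\colon\U\to\U$ is harmonic with $F(0)=0$, so the Heinz--Schwarz lemma \eqref{eq-Heinz} gives $|F(\zeta)|\le\frac{4}{\pi}\arctan|\zeta|$ for all $\zeta\in\U$. Specializing to $\zeta=r$ and inserting the identity $F(r)=\|f(z)\|_a$ obtained above yields
\[
\|f(z)\|_a=|F(r)|\le\frac{4}{\pi}\arctan r=\frac{4}{\pi}\arctan\|z\|_X,
\]
which is the desired estimate. The one genuinely non-routine point is that the supporting functional must be chosen complex-linear: only then is $l_w\circ f$ harmonic along the holomorphic slice $\zeta\mapsto\zeta u$, which is exactly what lets the one-variable Heinz lemma apply. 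Using the \emph{normalized} direction $u=z/\|z\|_X$ rather than $z$ itself is the small device that places the point of interest at the interior value $\zeta=r$, where equality can be read off.

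For sharpness, I would exhibit an extremal for each prescribed value $r=\|z_0\|_X$. Let $\Phi(\zeta)=\frac{2}{\pi}\im\log\frac{1+\ii\zeta}{1-\ii\zeta}$ be the standard extremal planar harmonic self-map of $\U$, which is real-valued and satisfies $\Phi(0)=0$ together with $\Phi(r)=\frac{4}{\pi}\arctan r$. Choosing $L\in T(z_0)$ (so $\|L\|=1$ and $L(z_0)=r$) and a unit vector $v\in(\C^n,\|\cdot\|_a)$, set $f(z)=\Phi(L(z))\,v$. Then $f$ is pluriharmonic, $f(0)=0$, and $\|f(z)\|_a=|\Phi(L(z))|<1$ on $B_X$ since $L(z)\in\U$; finally $\|f(z_0)\|_a=\Phi(r)=\frac{4}{\pi}\arctan\|z_0\|_X$, so equality holds and the bound is sharp.
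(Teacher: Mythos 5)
Your proof is correct, and it uses essentially the same slicing technique that this paper employs for the analogous results (Theorem E itself is only cited from \cite{HK15} here, but the paper's proofs of Theorems \ref{Har-1} and \ref{Har-2} proceed exactly as you do: restrict $f$ to the holomorphic disk $\zeta\mapsto \zeta z/\|z\|_X$, compose with a norming functional $l\in T(\cdot)$ to get a harmonic self-map of $\U$ fixing $0$, apply the one-variable Heinz-type lemma, and then pick the functional that norms $f(z)$). Your sharpness example $f(z)=\Phi(L(z))v$ with $L\in T(z_0)$ is likewise the same construction the paper uses in its sharpness arguments.
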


We remove the assumption ``$f(0)=0$" in Theorem E and
obtain the following result.

\begin{Thm}\label{Har-1}
Suppose that $B_{X}$ and $B_{Y}$ are the unit balls of the complex
Banach spaces $X$ and $Y$, respectively, and $f:\, B_{X}\rightarrow
B_{Y}$ is a pluriharmonic mapping. Then
$$
\left\|f(z)-\frac{1-\|z\|_{X}^{2}}{1+\|z\|_{X}^{2}}f(0)\right\|_{Y}\leq\frac{4}{\pi}\arctan\|z\|_{X} ~\mbox{ for $z\in B_{X}$}.
$$
In particular, if $f(0)=0$, then this estimate is sharp.
\end{Thm}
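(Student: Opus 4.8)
The plan is to reduce the Banach-space statement to a one-variable estimate for complex-valued harmonic functions on $\U$, using the Hahn--Banach/pluriharmonicity machinery set up in Section~\ref{Intro}. Fix $z\in B_X\setminus\{0\}$, put $r=\|z\|_X\in(0,1)$ and $\xi=z/r$, so that $\|\xi\|_X=1$ and $\zeta\mapsto \zeta\xi$ maps $\U$ holomorphically into $B_X$ with $0\mapsto 0$ and $r\mapsto z$. Set $u=f(z)-\frac{1-r^2}{1+r^2}f(0)$; we may assume $u\neq 0$. Choose $l\in T(u)$ and define $F(\zeta)=l\bigl(f(\zeta\xi)\bigr)$ for $\zeta\in\U$. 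Since $f$ is pluriharmonic and $l\in Y^*$, the restriction $F$ is harmonic on $\U$, while $|F(\zeta)|\le\|f(\zeta\xi)\|_Y<1$, so $F:\U\to\U$. By construction $F(0)=l(f(0))$, $F(r)=l(f(z))$, and
\[
\|u\|_Y=l(u)=F(r)-\frac{1-r^2}{1+r^2}\,F(0),
\]
which is in particular real and nonnegative. Thus everything reduces to the sharp one-variable inequality: for every harmonic $F:\U\to\U$,
\[
\left|F(r)-\frac{1-r^2}{1+r^2}F(0)\right|\le\frac{4}{\pi}\arctan r .
\]

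To prove this I would use the Poisson kernel $P_t(\theta)=\frac{1-t^2}{1-2t\cos\theta+t^2}$. For $\rho\in(r,1)$ the dilation $w\mapsto F(\rho w)$ is continuous on $\overline{\U}$, so applying the Poisson formula at $w=r/\rho$ and at $w=0$ and subtracting gives, using $|F(\rho e^{i\theta})|<1$,
\[
\left|F(r)-\frac{1-(r/\rho)^2}{1+(r/\rho)^2}F(0)\right|\le \frac{1}{2\pi}\int_0^{2\pi}\left|P_{r/\rho}(\theta)-\frac{1-(r/\rho)^2}{1+(r/\rho)^2}\right|\,d\theta .
\]
The main content is then the identity $\frac{1}{2\pi}\int_0^{2\pi}\bigl|P_t(\theta)-\frac{1-t^2}{1+t^2}\bigr|\,d\theta=\frac{4}{\pi}\arctan t$ for all $t\in[0,1)$. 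The key observation is that the subtracted constant is exactly the \emph{median} value $\frac{1-t^2}{1+t^2}=P_t(\pm\pi/2)$ of the Poisson kernel: the difference $P_t(\theta)-\frac{1-t^2}{1+t^2}$ is a positive multiple of $\cos\theta$, hence has the sign of $\cos\theta$. Splitting the integral at $\theta=\pm\pi/2$, folding the two halves together, and substituting $s=\sin\theta$ collapses it to an elementary integral, which via the identity $\arctan\frac{2t}{1-t^2}=2\arctan t$ evaluates to $\frac{4}{\pi}\arctan t$. Applying this with $t=r/\rho$ and letting $\rho\to1^-$ (using continuity of $F$ at the interior point $r$) yields the displayed one-variable estimate, and hence $\|u\|_Y\le\frac{4}{\pi}\arctan r=\frac{4}{\pi}\arctan\|z\|_X$; the case $z=0$ is trivial. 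I expect this integral evaluation—and specifically the recognition that $\frac{1-t^2}{1+t^2}$ is the median of $P_t$, which is precisely what makes the hypothesis $f(0)=0$ dispensable—to be the principal obstacle, the reduction step being routine.

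For the sharpness claim, note that when $f(0)=0$ the inequality becomes $\|f(z)\|_Y\le\frac{4}{\pi}\arctan\|z\|_X$, which is exactly the statement of Theorem~E; since Theorem~E is sharp, so is our estimate in this case. Concretely, equality in the integral bound is attained by the boundary data $F^{*}=\operatorname{sgn}\!\bigl(P_r-\frac{1-r^2}{1+r^2}\bigr)=\operatorname{sgn}(\cos\theta)$, i.e.\ by the harmonic extension of $\operatorname{sgn}(\cos\theta)$, and composing this one-variable extremal with a suitable norm-one functional on $Y$ produces a pluriharmonic $f:\,B_X\to B_Y$ with $f(0)=0$ realizing equality for each prescribed $z$, showing the constant $\frac{4}{\pi}$ cannot be improved.
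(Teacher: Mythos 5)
Your proof is correct, and its skeleton coincides with the paper's: the reduction step (fix $z$, set $u=f(z)-\frac{1-r^2}{1+r^2}f(0)$, pick $l\in T(u)$ via Hahn--Banach, restrict to the analytic disk $\zeta\mapsto\zeta z/\|z\|_X$ to get a harmonic $F:\U\to\U$ with $\|u\|_Y=F(r)-\frac{1-r^2}{1+r^2}F(0)$) and the sharpness example (the one-variable extremal precomposed with a functional and multiplied by a fixed unit vector of $Y$) are exactly what the paper does. The genuine difference is the key one-variable inequality $\bigl|F(r)-\frac{1-r^2}{1+r^2}F(0)\bigr|\le\frac{4}{\pi}\arctan r$: the paper simply cites it (Pavlovi\'c \cite{Pav1}, Theorem 3.6.1, or Hethcote \cite{Het}), whereas you prove it from scratch. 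Your Poisson-kernel argument is sound: dilation to justify the boundary representation, subtraction of the mean-value identity weighted by $c=\frac{1-t^2}{1+t^2}$, and the exact evaluation $\frac{1}{2\pi}\int_0^{2\pi}\bigl|P_t(\theta)-c\bigr|\,d\theta=\frac{4}{\pi}\arctan t$, which works precisely because $c=P_t(\pm\pi/2)$ is the median of the kernel, so that $P_t-c$ carries the sign of $\cos\theta$ and the integral is the harmonic extension of $\operatorname{sgn}(\cos\theta)$ evaluated at $t$ (your elementary substitution $s=\sin\theta$ and the double-angle identity for $\arctan$ check out). What this buys is self-containedness and a transparent identification of the extremal boundary data $\operatorname{sgn}(\cos\theta)$, where the paper's citation buys only brevity. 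One small slip in your last paragraph: to build the extremal pluriharmonic map $B_X\to B_Y$ you must precompose the one-variable extremal with a norm-one functional on $X$ (an element of $T(z_0/\|z_0\|_X)$) and then multiply by a unit vector $b\in\partial B_Y$ --- not compose it ``with a norm-one functional on $Y$''; this is exactly the paper's construction $f(z)=\Phi(l_{w_0}(z))b$, and with that correction your sharpness argument is complete.
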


In particular, if $f(0)=0$ in Theorem \ref{Har-1}, then we have a better estimate as follows
(cf. \cite[Theorem 1.7]{Z18}).

\begin{Thm}\label{Har-2}

Assume the hypothesis of Theorem \ref{Har-1}, and in addition let $f(0)=0.$ Then we have
$$\|f(z)\|_{Y}\leq\frac{4}{\pi}\arctan\left(\frac{\|z\|_{X}+
\frac{\pi}{4}\Lambda_{f}(0; w)}{1+\frac{\pi}{4}\Lambda_{f}(0; w)\|z\|_{X}}\|z\|_{X}\right)\leq\frac{4}{\pi}\arctan\|z\|_{X}
~\mbox{ for $z\in B_{X}$},
$$
where $w=z/\| z\|_X$.
\end{Thm}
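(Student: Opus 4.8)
The plan is to localize to the complex line through $z$, pass to a scalar harmonic function on $\U$ via a suitable support functional, and then transport that function into the disk through the conformal strip map $\tan(\tfrac{\pi}{4}\cdot)$, where the holomorphic estimate of Theorem B applies.

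First I would fix $z\in B_X$; the cases $z=0$ and $f(z)=0$ are trivial, so assume $r:=\|z\|_X\in(0,1)$ and $f(z)\neq 0$. Put $w=z/r\in\partial B_X$, $u_0=f(z)/\|f(z)\|_Y$, and choose $l_{u_0}\in T(u_0)$. Define $\psi(\zeta)=\varphi[f,w,l_{u_0}](\zeta)=l_{u_0}(f(\zeta w))$ for $\zeta\in\U$. Since $f$ is pluriharmonic and $\zeta\mapsto\zeta w$ is a holomorphic curve in $B_X$, the function $\psi$ is a complex-valued harmonic function on $\U$, and $\|f(\zeta w)\|_Y<1$ gives $|\psi(\zeta)|<1$. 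Moreover $\psi(0)=l_{u_0}(f(0))=0$ and $\psi(r)=l_{u_0}(f(z))=\|f(z)\|_Y\,l_{u_0}(u_0)=\|f(z)\|_Y$. Hence $u:=\re\,\psi$ is a real harmonic function mapping $\U$ into $(-1,1)$ with $u(0)=0$ and $u(r)=\|f(z)\|_Y$.

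Next I would let $F$ be the holomorphic function on $\U$ with $\re\,F=u$ and $F(0)=0$; it sends $\U$ into the strip $\{\,|\re\,\cdot|<1\,\}$. Because $\zeta\mapsto\tan(\tfrac{\pi}{4}\zeta)$ maps this strip conformally onto $\U$, the map $\Omega=\tan(\tfrac{\pi}{4}F)$ is holomorphic from $\U$ into $\U$ with $\Omega(0)=0$, so Theorem B yields
\[
|\Omega(\zeta)|\le |\zeta|\,\frac{|\Omega'(0)|+|\zeta|}{1+|\Omega'(0)|\,|\zeta|},\qquad \zeta\in\U.
\]
Inverting gives $F=\tfrac{4}{\pi}\arctan\Omega$, whence $\Omega'(0)=\tfrac{\pi}{4}F'(0)$ and $u=\tfrac{4}{\pi}\re(\arctan\Omega)$. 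Writing $\psi=p+\overline{q}$ with $p,q$ holomorphic and $p(0)=q(0)=0$, one computes $u=\re(p+q)$, so $F'(0)=p'(0)+q'(0)$ and
\[
|F'(0)|\le |p'(0)|+|q'(0)|=|\psi_{\zeta}(0)|+|\psi_{\overline{\zeta}}(0)|\le \Lambda_f(0;w),
\]
the last step being exactly the definition of $\Lambda_f(0;w)$ as a supremum over unit vectors and their support functionals. Thus $|\Omega'(0)|\le\tfrac{\pi}{4}\Lambda_f(0;w)$.

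Finally I would combine these at $\zeta=r$. Using the elementary inequality $\re(\arctan\omega)\le\arctan|\omega|$ for $\omega\in\U$ (which, writing $\omega=\tan(a+\ii b)$ with $|a|<\tfrac{\pi}{4}$, reduces to $\tan^2 a\le 1$), the monotonicity of $\arctan$, and the monotonicity of $a\mapsto r\tfrac{a+r}{1+ar}$ (its derivative is $r(1-r^2)/(1+ar)^2>0$), I obtain
\[
\|f(z)\|_Y=u(r)=\tfrac{4}{\pi}\re(\arctan\Omega(r))\le \tfrac{4}{\pi}\arctan\!\Big(r\,\tfrac{|\Omega'(0)|+r}{1+|\Omega'(0)|r}\Big)\le \tfrac{4}{\pi}\arctan\!\Big(\tfrac{r+\frac{\pi}{4}\Lambda_f(0;w)}{1+\frac{\pi}{4}\Lambda_f(0;w)\,r}\,r\Big),
\]
which is the first asserted bound with $r=\|z\|_X$. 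The second bound follows because $\tfrac{r+\lambda}{1+\lambda r}\le 1$ whenever $r\le 1$ and $\lambda=\tfrac{\pi}{4}\Lambda_f(0;w)\le 1$ (recall $\Lambda_f(0;w)\le\tfrac{4}{\pi}$). The main obstacle is the third paragraph: matching the derivative $F'(0)$ of the real-part completion of a single slice functional to the rotation-invariant quantity $\Lambda_f(0;w)$, and justifying the passage $\re(\arctan\omega)\le\arctan|\omega|$ that converts the holomorphic Osserman estimate into its harmonic counterpart.
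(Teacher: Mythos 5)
Your proposal is correct, but it replaces the paper's key one-variable input with a self-contained derivation, so the two routes genuinely differ. The slicing is identical: the paper also sets $\varphi(\zeta)=l_u(f(\zeta w))$, bounds $|\varphi_{\zeta}(0)|+|\varphi_{\overline{\zeta}}(0)|\leq\Lambda_f(0;w)$ straight from the definition, and concludes by taking $u=f(z)/\|f(z)\|_Y$ and $\zeta=\|z\|_X$, with the same monotonicity of $a\mapsto r(a+r)/(1+ar)$ left implicit. The difference is what the slice is fed into: the paper invokes Zhu's harmonic Schwarz lemma (Theorem M, quoted from [Z18, Theorem 1.7]) as a black box to bound $|\varphi(\zeta)|$, whereas you reprove that scalar estimate from Osserman's holomorphic lemma (Theorem B, which the paper does state) by taking the real part $u=\re\,\psi$, completing it to a holomorphic $F$ into the strip with $F(0)=0$, conjugating by the conformal map $\tan(\tfrac{\pi}{4}\,\cdot)$, and using $\re(\arctan\omega)\leq\arctan|\omega|$ together with $F'(0)=p'(0)+q'(0)$. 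Your steps all check out: the reduction of $\re(\arctan\omega)\leq\arctan|\omega|$ to $\tan^{2}a\leq 1$ is a correct computation, $F=p+q$ because both are holomorphic with the same real part and vanish at $0$, and the paper's remark $\Lambda_f(0;w)\leq 4/\pi$ gives the second inequality exactly as you say. What your route buys is self-containedness --- you have in effect reproved Theorem M rather than citing it --- and your intermediate quantity $|p'(0)+q'(0)|$ is even marginally sharper than the sum $|p'(0)|+|q'(0)|$ used in Theorem M (the gain is lost once you pass to $\Lambda_f(0;w)$). What the paper's route buys is brevity, plus a bound on the full modulus $|l_u(f(\zeta w))|$ for every functional and every $\zeta$, whereas you control only the real part at the single point $\zeta=r$ where the slice value is real --- which is all the theorem needs.
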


Let $f$ be a one-to-one harmonic mapping  of $\U$ onto itself
with $f(0)=0$
which is $C^1$ up to the boundary.
By using (\ref{eq-Heinz}),  Heinz  \cite[Ineq. (15)]{Hei} proved that, for any
$\theta\in[0,2\pi]$,
\be\label{eq-hgj-1}|f_{\zeta}(e^{i\theta})|+|f_{\overline{\zeta}}(e^{i\theta})|\geq\frac{2}{\pi}.\ee
In the following, we extend (\ref{eq-hgj-1}) into the following forms.

First, we
will apply Theorem \ref{Har-1} and the Harnack principle to establish a  new version Schwarz
lemma at the boundary for pluriharmonic mappings.

\begin{Thm}\label{thm-Har3a}

Assume the hypotheses of Theorem \ref{Har-1}. In addition, assume that the radial derivative $Df(b)b$
exists at $b\in\partial B_X$  with $\|f(b)\|_{Y}=1.$ Then we have
\beqq \|Df(b)b\|_Y\geq
\max\left\{ \frac{2}{\pi}-\| f(0)\|_Y, \frac{1-\| f(0)\|_Y}{2} \right\}.
\eeqq
\end{Thm}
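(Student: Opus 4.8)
The plan is to reduce the statement to the one-dimensional boundary estimate (\ref{eq-hgj-1}) of Heinz by restricting the mapping $f$ to the complex line through $b$. Concretely, I would set $w=b\in\partial B_X$ (note $\|b\|_X=1$) and consider the slice function $\varphi(\zeta)=f(\zeta b)$ for $\zeta\in\U$, which is a pluriharmonic mapping of $\U$ into $B_Y$ with $\varphi(0)=f(0)$. The radial derivative $Df(b)b$ is precisely the radial derivative of $\varphi$ at $\zeta=1$, so that $\|Df(b)b\|_Y$ controls the boundary behaviour of $\varphi$ along the real radius. The two terms in the asserted maximum should arise from two different ways of exploiting $\|f(b)\|_Y=1$: one via the harmonic-Schwarz estimate on the norm, and one via a Harnack-type inequality.

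For the first bound $\|Df(b)b\|_Y\geq \tfrac{2}{\pi}-\|f(0)\|_Y$, I would apply Theorem \ref{Har-1} to $\varphi$ directly. Theorem \ref{Har-1} gives
\[
\left\|\varphi(\zeta)-\frac{1-|\zeta|^2}{1+|\zeta|^2}\varphi(0)\right\|_Y\leq\frac{4}{\pi}\arctan|\zeta|,\qquad \zeta\in\U.
\]
Evaluating along the real segment $\zeta=r\to 1^-$ and using $\|\varphi(1)\|_Y=\|f(b)\|_Y=1$ together with $\arctan 1=\pi/4$, I would estimate the difference quotient defining the radial derivative. The key inequality to extract is that the rate of growth of $\|\varphi(r)\|_Y$ as $r\to 1^-$ is at least $\tfrac{2}{\pi}-\|f(0)\|_Y$; the factor $\tfrac{1-r^2}{1+r^2}\to 0$ controls the contribution of $f(0)$ at the boundary, while differentiating $\tfrac{4}{\pi}\arctan r$ at $r=1$ produces $\tfrac{4}{\pi}\cdot\tfrac12=\tfrac{2}{\pi}$. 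A triangle-inequality comparison between $\|\varphi(r)\|_Y$ and the left-hand side then yields the claimed lower bound on $\|Df(b)b\|_Y$.

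For the second bound $\|Df(b)b\|_Y\geq\tfrac{1-\|f(0)\|_Y}{2}$, I would invoke the Harnack principle as the excerpt announces. Writing $u(\zeta)=1-\|\varphi(\zeta)\|_Y$, a nonnegative quantity since $\varphi$ maps into $\overline{B_Y}$, and noting $u(1)=0$, I would apply a Harnack-type comparison to the superharmonic (or appropriately subharmonic-majorized) behaviour of $\|\varphi\|_Y$ to bound the radial decay rate of $u$ at the boundary from below in terms of $u(0)=1-\|f(0)\|_Y$. The standard Harnack inequality on the disk contributes the factor $\tfrac12$ through the derivative of the Poisson kernel's boundary weight, and the $l_u$-functional representation via $\Lambda_f$ connects $\tfrac{d}{dr}\|\varphi(r)\|_Y$ with $\|Df(b)b\|_Y$.

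The main obstacle I anticipate is the nonsmoothness of the norm $\|\cdot\|_Y$: since $Y$ is an arbitrary Banach space, $\|\varphi(\zeta)\|_Y$ need not be differentiable, so I cannot naively differentiate it. I would circumvent this by choosing a supporting functional $l_u\in T(u)$ with $u=f(b)$ (so $\|u\|_Y=1$) and working with the scalar harmonic function $\re\, l_u(\varphi(\zeta))$, which satisfies $l_u(\varphi(\zeta))\le \|\varphi(\zeta)\|_Y\le 1$ and attains the value $1$ at $\zeta=1$. Applying the one-dimensional boundary Schwarz estimates to this scalar harmonic function, and then using $|l_u(Df(b)b)|\le\|Df(b)b\|_Y$, transfers the planar inequalities into the Banach-space setting. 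Reconciling the precise constants $\tfrac{2}{\pi}$ and $\tfrac12$ with the correct $f(0)$-dependent corrections, and verifying that taking the better of the two scalar estimates produces exactly the stated maximum, will be the delicate bookkeeping step.
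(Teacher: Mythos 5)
Your proposal is correct and matches the paper's proof in both halves: the first bound comes from applying Theorem \ref{Har-1} along the radius, the triangle inequality $\|f(rb)-f(b)\|_Y\geq 1-\|f(rb)\|_Y$, and the limit $\lim_{r\to 1^-}\bigl(1-\tfrac{4}{\pi}\arctan r\bigr)/(1-r)=\tfrac{2}{\pi}$ together with the $-\|f(0)\|_Y$ contribution from the factor $\tfrac{1+r}{1+r^2}$; the second comes from Harnack's inequality applied to the positive harmonic function $p(\zeta)=1-\mathrm{Re}\,l_{f(b)}(f(\zeta b))$ with $p(1)=0$. Note that your first instinct for the second bound (Harnack on $1-\|f(\zeta b)\|_Y$) would not work as stated, since that function is only superharmonic and Harnack's lower bound does not transfer from $u(0)$ in that setting, but the fix you yourself propose --- passing to the supporting functional $l_{f(b)}\in T(f(b))$ and using $\mathrm{Re}\,l_{f(b)}(Df(b)b)\leq\|Df(b)b\|_Y$ --- is exactly the paper's argument.
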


Next, we
will apply Theorem \ref{Har-2} to establish a  new version Schwarz
lemma at the boundary for pluriharmonic mappings.
For the proof, it suffices to use  arguments similar to those in the proof of Theorem \ref{thm-3h}.
We omit the proof.

\begin{Thm}\label{thm-Har3}

Assume the hypothesis of Theorem \ref{thm-Har3a}.
Then, if $f(0)=0,$ we have
\beqq
\|Df(b)b\|_Y\geq
\frac{4}{\pi}\frac{1}{1+\frac{\pi}{4}\Lambda_f(0;b)}.
\eeqq
\end{Thm}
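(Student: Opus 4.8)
The plan is to imitate the proof of Theorem \ref{thm-3h} verbatim, the only change being that the holomorphic interior estimate of Theorem \ref{thm-h2} is replaced by the pluriharmonic interior estimate of Theorem \ref{Har-2}. Abbreviate $\lambda=\frac{\pi}{4}\Lambda_f(0;b)$ and restrict to the radial segment $z=rb$ with $r\in(0,1)$. Since $\|b\|_X=1$ we have $\|z\|_X=r$ and $w=z/\|z\|_X=b$, so Theorem \ref{Har-2} gives
\[
\|f(rb)\|_Y\leq\frac{4}{\pi}\arctan\!\left(\frac{(r+\lambda)r}{1+\lambda r}\right)=:\phi(r),\qquad r\in(0,1).
\]
Because $\|f(b)\|_Y=1$, the boundary value of the norm is attained, and I would study the one-sided difference quotient $Q(r)=\dfrac{1-\|f(rb)\|_Y}{1-r}$ as $r\to1^-$, squeezing it from below by the interior estimate and from above by the radial derivative.

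For the lower bound, observe first that $\phi(1)=\frac{4}{\pi}\arctan(1)=1$. Since $\|f(rb)\|_Y\le\phi(r)$, we get
\[
Q(r)\geq\frac{1-\phi(r)}{1-r}=\frac{\phi(1)-\phi(r)}{1-r},
\]
whose limit as $r\to1^-$ is $\phi'(1)$. The single computation carrying the whole argument is the evaluation of $\phi'(1)$: differentiating $u(r)=\frac{(r+\lambda)r}{1+\lambda r}$ yields $u(1)=1$ and $u'(1)=\frac{2}{1+\lambda}$, hence
\[
\phi'(1)=\frac{4}{\pi}\cdot\frac{u'(1)}{1+u(1)^2}=\frac{4}{\pi}\cdot\frac{1}{1+\lambda}=\frac{4}{\pi}\,\frac{1}{1+\frac{\pi}{4}\Lambda_f(0;b)},
\]
which is exactly the claimed bound.

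For the upper bound, I would use the reverse triangle inequality $\|f(b)\|_Y-\|f(rb)\|_Y\leq\|f(b)-f(rb)\|_Y$ together with the hypothesis that the radial derivative exists: since
\[
\frac{f(b)-f(rb)}{1-r}=\frac{f(rb)-f(b)}{r-1}\longrightarrow Df(b)b\quad\text{as }r\to1^-,
\]
one obtains $\limsup_{r\to1^-}Q(r)\le\|Df(b)b\|_Y$. Combining this with $\liminf_{r\to1^-}Q(r)\ge\phi'(1)$ gives $\|Df(b)b\|_Y\geq\frac{4}{\pi}\frac{1}{1+\frac{\pi}{4}\Lambda_f(0;b)}$, as desired.

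I expect no serious obstacle. The only point requiring a word of justification is the differentiability of $\phi$ at $r=1$, so that the lower-bound quotient has a genuine limit $\phi'(1)$ rather than merely a $\liminf$; this is immediate because $u$ is rational with $1+\lambda r>0$ on $[0,1]$ and $\arctan$ is smooth. Everything else is the same boundary-Schwarz bookkeeping as in Theorem \ref{thm-3h}, which is why the proof can safely be omitted in the text.
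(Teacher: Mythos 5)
Your proposal is correct and is exactly what the paper intends: it omits the proof of this theorem, stating that it suffices to repeat the argument of Theorem \ref{thm-3h} with the interior estimate of Theorem \ref{thm-h2} replaced by that of Theorem \ref{Har-2}, which is precisely your squeeze of the difference quotient $Q(r)$ between the radial derivative and $\phi'(1)$. Your computation $\phi'(1)=\frac{4}{\pi}\frac{1}{1+\frac{\pi}{4}\Lambda_f(0;b)}$ checks out, so there is nothing to add.
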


In particular, we consider the cases such that $B_Y$ is a bounded symmetric domain in $\C^n$ or is the complex Hilbert ball
(cf. \cite[Theorems 1.8 and 1.12]{Z18}).
In these cases, the domain of the definition of the mapping $f$ can be generalized to a balanced domain in a complex Banach space.

\begin{Thm}\label{Schwarz-pluriharmonic-symmetric}

Suppose that
 $G$ is a balanced domain in a complex Banach space $X$ and $\B_Y$ is a bounded symmetric domain
realized as the open unit ball of a finite dimensional JB$^*$-triple
$Y=\C^n$.
Let $\Gamma \subset \partial\B_Y$ be the set of maximal tripotents of $Y$, and
let
$f:\,G\to \B_Y$ be a pluriharmonic mapping with $f(0)=0$.
Assume that $f$ is differentiable at $z=\alpha\in \partial G$ and $f(\alpha)=\beta\in \Gamma$.
Let
\[
\varphi(\zeta)=\frac{1}{2c(\B_Y)}h_0(f(\zeta \alpha),\beta),
\quad \zeta \in \U,
\]
where $h_0$ is the Bergman metric on $\B_Y$ at $0$.
Then $\varphi$ is harmonic mapping of
$\U$ into itself with
$\varphi(0)=0$ and
we have
\begin{equation}
\label{eq-Schwarz-pluriharmonic-symmetric}
{\rm Re} \big(
\varphi_{\zeta}(1)+\varphi_{\overline{\zeta}}(1)
\big)\geq
\frac{4}{\pi}
\frac{1}
{1+\frac{\pi}{4}\Lambda_{f}(0,\alpha)},
\end{equation}
where ${\rm ``Re"}$ denotes the real part of a complex number.

In particular,
if $f=h+\overline{g}$,
where $h$ and $g$ are holomorphic on $G$,
and $f$ satisfies the above assumptions,
then we have
\begin{equation}
\label{eq-Schwarz-pluriharmonic-symmetric2}
\frac{1}{2c(\B_Y)}
{\rm Re} \big(h_0(Dh(\alpha)\alpha+
\overline{Dg(\alpha)\alpha}, \beta)\big)\geq
\frac{4}{\pi}
\frac{1}
{1+\frac{\pi}{4}\Lambda_{f}(0,\alpha)}.
\end{equation}
If $G=B_{X}$ is the unit ball of $X$,
then the inequalities
$(\ref{eq-Schwarz-pluriharmonic-symmetric})$ and
$(\ref{eq-Schwarz-pluriharmonic-symmetric2})$
are sharp.
\end{Thm}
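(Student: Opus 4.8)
The plan is to reduce the assertion to the one-variable estimate of Theorem~\ref{Har-2} applied to the auxiliary harmonic function $\varphi$, and then to transport the conclusion back to $f$ through the linear functional $L:=\frac{1}{2c(\B_Y)}h_0(\cdot,\beta)$. First I would record the elementary properties of $\varphi$. The functional $L$ is complex linear on $Y=\C^n$, and, exactly as in the proof of Theorem~\ref{thm-Sy} (via the theory of maximal tripotents, cf.\ \cite{HHK,CHHK16}), it maps $\B_Y$ into $\U$, has norm $\|L\|_{Y^{*}}\le 1$, and satisfies $L(\beta)=1$ because $\beta\in\Gamma$. Since $G$ is balanced and $\alpha\in\partial G$, the disc $\zeta\mapsto\zeta\alpha$ maps $\U$ into $G$, so $\zeta\mapsto f(\zeta\alpha)$ is a harmonic curve in $\B_Y$ and $\varphi(\zeta)=L(f(\zeta\alpha))$ is harmonic on $\U$ with values in $\U$; moreover $\varphi(0)=L(f(0))=0$ and $\varphi(1)=L(\beta)=1$, and differentiability of $f$ at $\alpha$ makes $\varphi$ differentiable at $\zeta=1$.

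For the inequality \eqref{eq-Schwarz-pluriharmonic-symmetric} I would apply Theorem~\ref{Har-2} to $\varphi$ in the case $X=Y=\C$. Setting $\lambda:=\frac{\pi}{4}\Lambda_\varphi(0;1)$ and
\[
\psi(r)=\frac{4}{\pi}\arctan\!\Big(\frac{r+\lambda}{1+\lambda r}\,r\Big),\qquad r\in[0,1],
\]
Theorem~\ref{Har-2} yields $|\varphi(r)|\le\psi(r)$ for $r\in(0,1)$, while $\psi(1)=\frac{4}{\pi}\arctan 1=1$. The point that upgrades the modulus bound of Theorem~\ref{thm-Har3} to a real-part bound is that $\operatorname{Re}\varphi(r)\le|\varphi(r)|\le\psi(r)$ on $(0,1)$ together with $\operatorname{Re}\varphi(1)=\psi(1)=1$; hence
\[
\frac{1-\operatorname{Re}\varphi(r)}{1-r}\ \ge\ \frac{\psi(1)-\psi(r)}{1-r},\qquad r\in(0,1).
\]
Letting $r\to 1^{-}$, the left side tends to $\operatorname{Re}\big(\varphi_\zeta(1)+\varphi_{\overline{\zeta}}(1)\big)$ and the right side to $\psi'(1)=\frac{4}{\pi}\frac{1}{1+\frac{\pi}{4}\Lambda_\varphi(0;1)}$, the value $\psi'(1)$ being obtained by a direct differentiation of $\arctan$. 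Finally, since $L(\beta)=1=\|\beta\|_Y$ and $\|L\|_{Y^{*}}\le 1$ we have $L\in T(\beta)$, so $\varphi(\zeta)=\varphi[f,\alpha,L](\zeta)$ and, directly from the definition of $\Lambda_f(0;\alpha)$,
\[
\Lambda_\varphi(0;1)=|\varphi_\zeta(0)|+|\varphi_{\overline{\zeta}}(0)|
=|\varphi[f,\alpha,L]_\zeta(0)|+|\varphi[f,\alpha,L]_{\overline{\zeta}}(0)|\le\Lambda_f(0;\alpha).
\]
Because $t\mapsto\frac{4}{\pi}\frac{1}{1+\frac{\pi}{4}t}$ is decreasing, \eqref{eq-Schwarz-pluriharmonic-symmetric} follows.

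For \eqref{eq-Schwarz-pluriharmonic-symmetric2}, when $f=h+\overline{g}$ with $h,g$ holomorphic, I would differentiate $\varphi(r)=\frac{1}{2c(\B_Y)}h_0(f(r\alpha),\beta)$ radially at $r=1$: by linearity and continuity of $h_0(\cdot,\beta)$ and the decomposition $Df(\alpha)\alpha=Dh(\alpha)\alpha+\overline{Dg(\alpha)\alpha}$ of the radial derivative,
\[
\varphi_\zeta(1)+\varphi_{\overline{\zeta}}(1)=\frac{1}{2c(\B_Y)}\,h_0\big(Dh(\alpha)\alpha+\overline{Dg(\alpha)\alpha},\beta\big),
\]
and taking real parts reduces \eqref{eq-Schwarz-pluriharmonic-symmetric2} to \eqref{eq-Schwarz-pluriharmonic-symmetric}. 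For sharpness when $G=B_{X}$, I would choose a support functional $l\in X^{*}$ with $\|l\|=1$ and $l(\alpha)=1$, let $\Phi=A_0+\overline{B_0}:\U\to\U$ be the extremal harmonic self-map saturating the one-variable Heinz-type estimate of Theorem~\ref{Har-2} for the prescribed value of $\Lambda$, and set $f(z)=\Phi(l(z))\,\beta$. Then $f$ is pluriharmonic, maps $B_{X}$ into $\B_Y$ since $\|\Phi(l(z))\beta\|_Y=|\Phi(l(z))|<1$, and satisfies $f(0)=0$, $f(\alpha)=\beta$; one checks that $\varphi=\Phi$ and $\Lambda_\varphi(0;1)=\Lambda_f(0;\alpha)$, so every inequality in the chain becomes an equality.

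I expect the principal external input to be the structural fact that $L=\frac{1}{2c(\B_Y)}h_0(\cdot,\beta)$ is a unit-norm functional peaking at the maximal tripotent $\beta$, that is $L(\B_Y)\subset\U$, $\|L\|_{Y^{*}}\le 1$ and $L(\beta)=1$; this is precisely what already underlies Theorem~\ref{thm-Sy}, so it may be quoted rather than reproved. The one genuinely new step relative to Theorem~\ref{thm-Har3} is the passage from a bound on $|\varphi_\zeta(1)+\varphi_{\overline{\zeta}}(1)|$ to a bound on its real part, which is why I would compare $\operatorname{Re}\varphi(r)$ with $\psi(r)$ along the radius directly instead of invoking Theorem~\ref{thm-Har3} verbatim. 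The remaining ingredients, namely the existence of the radial limit, the monotonicity of the bound in $\Lambda$, and the evaluation of $\psi'(1)$, are routine.
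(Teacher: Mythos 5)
Your reduction to one complex variable is exactly the paper's: both arguments pass to $\varphi(\zeta)=\frac{1}{2c(\B_Y)}h_0(f(\zeta\alpha),\beta)$, use the peak property of this functional at the maximal tripotent $\beta$ (so that $\varphi$ is a harmonic self-map of $\U$ with $\varphi(0)=0$, $\varphi(1)=1$, differentiable at $\zeta=1$), and then invoke a one-variable boundary estimate. The genuine difference is in that last step: the paper simply quotes Zhu's boundary Schwarz lemma \cite[Theorem 1.8]{Z18} as a black box, whereas you re-derive that one-variable statement from the interior Heinz-type bound of Theorem~\ref{Har-2} (equivalently Theorem M), by comparing ${\rm Re}\,\varphi(r)\le|\varphi(r)|\le\psi(r)$ along the radius, noting ${\rm Re}\,\varphi(1)=\psi(1)=1$, and passing to the limit in the difference quotients. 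Your evaluation $\psi'(1)=\frac{4}{\pi}\frac{1}{1+\frac{\pi}{4}\Lambda_\varphi(0;1)}$ is correct, the identification $L\in T(\beta)$ (hence $\Lambda_\varphi(0;1)\le\Lambda_f(0;\alpha)$) is justified by the same facts that underlie Theorem~\ref{thm-Sy}, and your derivation of \eqref{eq-Schwarz-pluriharmonic-symmetric2} from \eqref{eq-Schwarz-pluriharmonic-symmetric} by computing the radial derivative of $\varphi$ at $1$ is what the statement intends. What your route buys is self-containedness within the paper: the only one-variable input is Theorem M, and the passage from a modulus bound to a real-part boundary bound, which is hidden inside the citation in the paper's proof, is made explicit.

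The one weak point is sharpness. You invoke ``the extremal harmonic self-map saturating Theorem~\ref{Har-2} for the prescribed value of $\Lambda$,'' but Theorem~\ref{Har-2} carries no sharpness assertion, and the existence of such an extremal for an arbitrary prescribed $\Lambda\in[0,\frac{4}{\pi})$ is established nowhere in the paper; note also that the theorem only claims the inequalities are sharp, i.e.\ attained by some admissible $f$, not for every value of $\Lambda_f(0;\alpha)$. The paper settles this with a single explicit example: $f(z)=\psi(l_\alpha(z))\beta$ with $\psi(\zeta)=\frac{2}{\pi}\arctan\frac{2{\rm Re}(\zeta)}{1-|\zeta|^2}$, the Heinz extremal, for which $\Lambda_f(0;\alpha)=\frac{4}{\pi}$, $\psi$ is differentiable at $\zeta=1$ with $\psi(1)=1$, and the radial derivative equals $\frac{2}{\pi}$, matching the right-hand side. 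Your construction becomes a complete proof precisely if you specialize your $\Phi$ to this function (for which $|\Phi(r)|=\frac{4}{\pi}\arctan r$ realizes equality in Theorem~\ref{Har-2} along the radius); as written, it rests on an unproved existence claim.
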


If $B_{Y}$ is the complex Hilbert ball, then we have the following result.
We omit the proof, since it is similar to that in the proof of Theorem \ref{Schwarz-pluriharmonic-symmetric}.

\begin{Thm}\label{Schwarz-pluriharmonic-Hilbert}

Suppose that
 $G$ is a balanced domain in a complex Banach space $X$ and $\B_H$ is the unit ball of a complex Hilbert space $H$
 with inner product $\langle \cdot, \cdot \rangle$.
Let
$f:\,G\to \B_H$ be a pluriharmonic mapping with $f(0)=0$.
Assume that $f$ is differentiable at $z=\alpha\in \partial G$ and $f(\alpha)=\beta\in \partial \B_H$.
Let
\[
\varphi(\zeta)=\langle f(\zeta \alpha),\beta\rangle,
\quad \zeta \in \U.
\]
Then $\varphi$ is a harmonic mapping of
 $\U$ into itself with
$f(0)=0$ and
\begin{equation}
\label{eq-Schwarz-pluriharmonic-Hilbert}
{\rm Re} \big(
\varphi_{\zeta}(1)+\varphi_{\overline{\zeta}}(1)
\big)\geq
\frac{4}{\pi}
\frac{1}
{1+\frac{\pi}{4}\Lambda_{f}(0,\alpha)}.
\end{equation}

In particular,
if $H=\ell_2$ and $f=h+\overline{g}$,
where $h$ and $g$ are holomorphic on $G$,
and $f$ satisfies the above assumptions,
then we have
\begin{equation}
\label{eq-Schwarz-pluriharmonic-Hilbert2}
{\rm Re} \langle Dh(\alpha)\alpha+
\overline{Dg(\alpha)\alpha}, \beta \rangle\geq
\frac{4}{\pi}
\frac{1}
{1+\frac{\pi}{4}\Lambda_{f}(0,\alpha)}.
\end{equation}
The inequalities
$(\ref{eq-Schwarz-pluriharmonic-Hilbert})$ and
$(\ref{eq-Schwarz-pluriharmonic-Hilbert2})$
are sharp.
\end{Thm}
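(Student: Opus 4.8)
The plan is to reduce everything to the one–variable harmonic Schwarz–Pick estimate of Theorem~\ref{Har-2}, following the same scheme as in Theorem~\ref{Schwarz-pluriharmonic-symmetric} but with the functional $\frac{1}{2c(\B_Y)}h_0(\cdot,\beta)$ replaced by $l_\beta:=\langle\,\cdot\,,\beta\rangle\in H^{*}$. First I would record the elementary properties of $\varphi$. Since $G$ is balanced and $\alpha\in\partial G$, one has $\zeta\alpha\in G$ for all $\zeta\in\U$, so $\zeta\mapsto\zeta\alpha$ is a holomorphic curve in $G$; because $f$ is pluriharmonic and $l_\beta\in H^{*}$, the composition $\varphi(\zeta)=l_\beta(f(\zeta\alpha))=\langle f(\zeta\alpha),\beta\rangle$ is harmonic on $\U$. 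The Cauchy–Schwarz inequality together with $\|\beta\|_H=1$ and $\|f(\zeta\alpha)\|_H<1$ gives $|\varphi(\zeta)|<1$, i.e. $\varphi:\U\to\U$; moreover $\varphi(0)=\langle f(0),\beta\rangle=0$ and $\varphi(1)=\langle\beta,\beta\rangle=1$. Since $f$ is differentiable at $\alpha$, the radial derivative of $\varphi$ at $1$ exists and equals $\varphi_\zeta(1)+\varphi_{\overline\zeta}(1)=\langle Df(\alpha)\alpha,\beta\rangle$.

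Next, taking $u=\beta$ and $l_u=l_\beta$ in the definition of $\Lambda_f(0;\alpha)$ gives $\varphi[f,\alpha,l_\beta]=\varphi$, whence
\[
\Lambda_\varphi:=|\varphi_\zeta(0)|+|\varphi_{\overline\zeta}(0)|\le\Lambda_f(0;\alpha).
\]
Applying Theorem~\ref{Har-2} to the harmonic self–map $\varphi$ (the case $X=Y=\C$ in the direction $w=1$) yields, for $r\in(0,1)$,
\[
|\varphi(r)|\le \frac{4}{\pi}\arctan\!\left(\frac{r+\frac{\pi}{4}\Lambda_\varphi}{1+\frac{\pi}{4}\Lambda_\varphi\,r}\,r\right)=:B(r).
\]
Using $\mathrm{Re}\,\varphi(r)\le|\varphi(r)|\le B(r)$ and $\varphi(1)=1$, I would pass to the radial limit in
\[
\mathrm{Re}\big(\varphi_\zeta(1)+\varphi_{\overline\zeta}(1)\big)=\lim_{r\to1^-}\frac{1-\mathrm{Re}\,\varphi(r)}{1-r}\ge\lim_{r\to1^-}\frac{1-B(r)}{1-r}=B'(1),
\]
and a routine computation gives $B(1)=1$ and $B'(1)=\frac{4}{\pi}\big(1+\frac{\pi}{4}\Lambda_\varphi\big)^{-1}$. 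Since $t\mapsto\frac{4}{\pi}(1+\frac{\pi}{4}t)^{-1}$ is decreasing and $\Lambda_\varphi\le\Lambda_f(0;\alpha)$, this bound is $\ge\frac{4}{\pi}(1+\frac{\pi}{4}\Lambda_f(0;\alpha))^{-1}$, which is exactly \eqref{eq-Schwarz-pluriharmonic-Hilbert}.

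For the case $H=\ell_2$ and $f=h+\overline g$, differentiating $f(r\alpha)=h(r\alpha)+\overline{g(r\alpha)}$ radially gives $Df(\alpha)\alpha=Dh(\alpha)\alpha+\overline{Dg(\alpha)\alpha}$, so $\varphi_\zeta(1)+\varphi_{\overline\zeta}(1)=\langle Dh(\alpha)\alpha+\overline{Dg(\alpha)\alpha},\beta\rangle$ and taking real parts yields \eqref{eq-Schwarz-pluriharmonic-Hilbert2}. For sharpness when $G=B_X$, I would fix $l_\alpha\in T(\alpha)$ and set $f(z)=\Phi(l_\alpha(z))\beta$, where $\Phi$ is an extremal harmonic self–map of $\U$ for Theorem~\ref{Har-2} normalized by $\Phi(0)=0$ and $\Phi(1)=1$; then $f$ is pluriharmonic, maps $B_X$ into $\B_H$, satisfies $f(0)=0$ and $f(\alpha)=\beta$, and one checks $\varphi=\Phi$ and $\Lambda_f(0;\alpha)=\Lambda_\Phi$, so every inequality above becomes an equality. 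The explicit endpoint extremal $\Phi(z)=\frac{2}{\pi}\,\mathrm{Im}\log\frac{1+\ii z}{1-\ii z}$ realizes equality with $\Lambda_f(0;\alpha)=\frac{4}{\pi}$, and the remaining values of $\Lambda_f(0;\alpha)$ are obtained from the corresponding family of extremals for Theorem~\ref{Har-2}.

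The one genuinely delicate point is the passage to the radial boundary limit: one must justify that the difference quotient of $\varphi$ converges to $\varphi_\zeta(1)+\varphi_{\overline\zeta}(1)$ (this is where the differentiability of $f$ at $\alpha$ is used) and that the inequality $\mathrm{Re}\,\varphi(r)\le B(r)$ survives termwise division by $1-r$ and the limit; the remaining work is the formal one–variable reduction forced by the definitions, together with exhibiting harmonic—rather than holomorphic—extremals attaining equality for every admissible $\Lambda_f(0;\alpha)$.
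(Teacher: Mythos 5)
Your proposal is correct, and at the structural level it matches the paper: the paper proves the companion result, Theorem \ref{Schwarz-pluriharmonic-symmetric}, and declares the present Hilbert-ball case ``similar,'' and that proof consists of exactly your reduction to the harmonic self-map $\varphi(\zeta)=\langle f(\zeta\alpha),\beta\rangle$ of $\U$, the bound $|\varphi_{\zeta}(0)|+|\varphi_{\overline{\zeta}}(0)|\leq\Lambda_f(0;\alpha)$, and the same extremal mapping (your $\Phi(z)=\frac{2}{\pi}\,\mathrm{Im}\log\frac{1+\ii z}{1-\ii z}$ is precisely the paper's $\psi(\zeta)=\frac{2}{\pi}\arctan\frac{2\mathrm{Re}(\zeta)}{1-|\zeta|^{2}}$ written in a different form). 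Where you genuinely diverge is the key one-variable step: the paper invokes Zhu's boundary Schwarz lemma for harmonic self-maps of $\U$ (\cite[Theorem 1.8]{Z18}) as a black box, whereas you rederive that lemma from the interior estimate of Theorem \ref{Har-2} (equivalently Theorem M) by comparing the radial difference quotient of $\varphi$ with that of the majorant $B(r)$ and computing $B'(1)=\frac{4}{\pi}\bigl(1+\frac{\pi}{4}\Lambda_\varphi\bigr)^{-1}$. This is the same interior-to-boundary device the paper itself uses in Theorems \ref{thm-3h} and \ref{thm-Har3}, so what your route buys is a proof that is self-contained within the paper's own results, at the cost of a routine calculus computation; the paper's route is shorter but rests on the external citation. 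Your treatment of the delicate points (existence of the radial limit from the differentiability of $f$ at $\alpha$, and the monotonicity needed to replace $\Lambda_\varphi$ by $\Lambda_f(0;\alpha)$) is sound. One small caveat: your closing claim that equality can be arranged for \emph{every} admissible value of $\Lambda_f(0;\alpha)$ goes beyond what the theorem's sharpness assertion requires and is left unsubstantiated; the single explicit extremal, for which $\Lambda_f(0;\alpha)=\frac{4}{\pi}$ and both sides of \eqref{eq-Schwarz-pluriharmonic-Hilbert} and \eqref{eq-Schwarz-pluriharmonic-Hilbert2} equal $\frac{2}{\pi}$, already establishes sharpness, so that sentence can simply be dropped.
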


The
classical Schwarz-Pick lemma states that an analytic function $f$
of $\mathbb{U}$ into itself satisfies

\begin{equation}\label{eq-sp}
|f'(z)|\leq\frac{1-|f(z)|^{2}}{1-|z|^{2}},~z\in\mathbb{U}.
\end{equation}
Chu et al. \cite[Lemma 3.12]{CHHK17}
generalized (\ref{eq-sp}) to holomorphic mappings between the unit balls of
JB$^*$-triples
(cf. \cite{BGM14}, \cite{CG01}, \cite{HK15}).

In 1989, Colonna established an analogue of the Schwarz-Pick lemma
for planar harmonic mappings.

\begin{ThmF}{\rm (\cite[Theorems 3 \mbox{and} 4]{C89})}\label{Coo}
Let $f$ be a harmonic mapping of $\mathbb{U}$ into itself.
Then, for $z\in\mathbb{U}$,
$$|f_{z}(z)|+|f_{\overline{z}}(z)|\leq\frac{4}{\pi}\frac{1}{1-|z|^{2}}.
$$
This estimate is sharp, and all the extremal functions are
$$f(z)=\frac{2\gamma }{\pi}\arg \left (
\frac{1+\phi(z)}{1-\phi(z)}\right), $$ where $|\gamma|=1$ and $\phi$
is a conformal automorphism of $\mathbb{U}$.
\end{ThmF}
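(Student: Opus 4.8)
The plan is to exploit the conformal invariance encoded in the right-hand side to reduce the estimate to the single point $z=0$, and then to establish the resulting bound at the origin by a direct Fourier computation against the Poisson circle averages, thereby avoiding any appeal to boundary-value theory in the main inequality.

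First I would reduce to the origin. Writing $f=h+\overline{g}$ with $h,g$ holomorphic on $\mathbb{U}$, one has $f_z=h'$ and $f_{\overline z}=\overline{g'}$, so that $|f_z(z)|+|f_{\overline z}(z)|=|h'(z)|+|g'(z)|$. Fix $a\in\mathbb{U}$ and let $\phi_a(w)=(a+w)/(1+\overline{a}w)$ be the automorphism of $\mathbb{U}$ with $\phi_a(0)=a$ and $|\phi_a'(0)|=1-|a|^2$. Since $f$ is harmonic and $\phi_a$ holomorphic, $F:=f\circ\phi_a$ is again a harmonic self-map of $\mathbb{U}$, and the chain rule gives $F_w=(f_z\circ\phi_a)\,\phi_a'$ and $F_{\overline w}=(f_{\overline z}\circ\phi_a)\,\overline{\phi_a'}$. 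Evaluating at $w=0$ yields
\[
|F_w(0)|+|F_{\overline w}(0)|=\big(|f_z(a)|+|f_{\overline z}(a)|\big)(1-|a|^2).
\]
Hence it suffices to prove the origin estimate $|F_w(0)|+|F_{\overline w}(0)|\le 4/\pi$ for every harmonic $F\colon\mathbb{U}\to\mathbb{U}$; dividing by $1-|a|^2$ then gives the claim at $a$.

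Next I would prove the origin bound by circle averages. Write $F=h+\overline g$ with $h(w)=\sum_{n\ge 0}a_nw^n$ and $g(w)=\sum_{n\ge 1}b_nw^n$, so $F_w(0)=a_1$ and $F_{\overline w}(0)=\overline{b_1}$. For $0<r<1$ the Fourier coefficients of $\theta\mapsto F(re^{i\theta})$ at frequencies $\pm 1$ recover $a_1r$ and $\overline{b_1}\,r$ respectively. Choosing unimodular $\lambda,\mu$ with $\lambda a_1=|a_1|$ and $\mu\,\overline{b_1}=|\overline{b_1}|$ and taking real parts, the bound $|F|<1$ gives
\[
\big(|a_1|+|b_1|\big)r=\frac{1}{2\pi}\int_0^{2\pi}\operatorname{Re}\!\big[F(re^{i\theta})(\lambda e^{-i\theta}+\mu e^{i\theta})\big]\,d\theta\le\frac{1}{2\pi}\int_0^{2\pi}\big|\lambda e^{-i\theta}+\mu e^{i\theta}\big|\,d\theta.
\]
Since $|\lambda e^{-i\theta}+\mu e^{i\theta}|=2|\cos(\theta+\theta_0)|$ for a phase $\theta_0$ depending only on $\lambda,\mu$, the last integral equals $\tfrac{1}{\pi}\int_0^{2\pi}|\cos\varphi|\,d\varphi=4/\pi$, independent of the phases. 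Letting $r\to 1^-$ yields $|F_w(0)|+|F_{\overline w}(0)|=|a_1|+|b_1|\le 4/\pi$, which combined with the first step proves the inequality.

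Finally, for sharpness I would analyze the equality case. Equality in the limit forces the radial boundary function (which exists a.e.\ by Fatou, $F$ being bounded harmonic) to satisfy $\operatorname{Re}\!\big[F(e^{i\theta})(\lambda e^{-i\theta}+\mu e^{i\theta})\big]=|\lambda e^{-i\theta}+\mu e^{i\theta}|$ a.e., which compels $|F(e^{i\theta})|=1$ with $F$ equal to the unimodular sign of $\cos(\theta+\theta_0)$, i.e.\ a $\pm\gamma$ step function jumping across a diameter. Its harmonic extension is, up to a rotation $\phi$ of $\mathbb{U}$ and a unimodular factor $\gamma$, exactly $f(z)=\tfrac{2\gamma}{\pi}\arg\tfrac{1+\phi(z)}{1-\phi(z)}$: indeed $\tfrac{1+z}{1-z}$ maps $\mathbb{U}$ onto the right half-plane and sends $e^{i\theta}$ to $i\cot(\theta/2)$, so $\tfrac{2}{\pi}\arg\tfrac{1+z}{1-z}$ has boundary values $\pm 1$ on the two semicircles. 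I expect this last step to be the main obstacle: the inequality itself is a short Fourier computation, whereas verifying that the displayed family exhausts all extremals requires both the equality analysis at the boundary and the reconstruction of $F$ from its sign data, which is where the genuine care lies.
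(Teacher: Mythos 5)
Theorem F is not proved in this paper at all: it is quoted verbatim from Colonna \cite{C89}, so there is no in-paper argument to compare against, and your proof has to stand on its own --- which it does. The reduction to the origin is correct: for $F=f\circ\phi_a$ the chain rule gives $|F_w(0)|+|F_{\overline{w}}(0)|=\big(|f_z(a)|+|f_{\overline{z}}(a)|\big)(1-|a|^2)$, and the origin bound follows from your pairing computation, since $\frac{1}{2\pi}\int_0^{2\pi}F(re^{i\theta})\big(\lambda e^{-i\theta}+\mu e^{i\theta}\big)\,d\theta=(|a_1|+|b_1|)r$ while $\frac{1}{2\pi}\int_0^{2\pi}\big|\lambda e^{-i\theta}+\mu e^{i\theta}\big|\,d\theta=\frac{1}{\pi}\int_0^{2\pi}|\cos\varphi|\,d\varphi=\frac{4}{\pi}$ independently of the phases; letting $r\to1^-$ gives $|a_1|+|b_1|\le 4/\pi$. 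This is essentially the classical (Colonna-type) argument --- M\"obius invariance plus the $\pm1$ Fourier coefficients --- with the pleasant refinement that by working on circles $|w|=r$ you keep boundary-value theory out of the inequality itself. Your sketch of the extremal classification is also sound, and your self-assessment of where the real work lies is accurate: that step does require the Poisson representation $F=P[F^*]$ of a bounded harmonic function (Fatou), because equality makes the nonnegative integrand $|w(\theta)|-\operatorname{Re}\big[F^*(e^{i\theta})w(\theta)\big]$ integrate to zero, forcing $|F^*|=1$ and $F^*=\overline{w}/|w|$ a.e., i.e.\ $F^*$ is the two-valued step function $\pm\gamma$ jumping across a diameter; its Poisson extension is $\frac{2\gamma}{\pi}\arg\frac{1+\rho z}{1-\rho z}$ for a rotation $\rho$, and conjugating back by $\phi_a$ turns $\rho$ into a general automorphism $\phi$, exactly the stated family. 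The only phrasing worth tightening is ``equality in the limit forces'': since $a_1$ and $\overline{b_1}$ \emph{are} the $\pm1$ Fourier coefficients of $F^*$, you can run the equality analysis directly on $F^*$ with no limiting argument, which makes the vanishing-integrand step immediate.
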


For real valued harmonic functions in $\U$,
Kalaj and Vuorinen \cite{KV12}
and
Chen \cite[Theorem 1.2]{C13}
obtained a better estimate as follows.

\begin{ThmG}
\label{thm-KV12}
Let $f:\,\U\to (-1,1)$ be a real-valued harmonic function.
Then the following inequalities hold:
\begin{enumerate}
\item $\displaystyle
|\nabla f(z)|\leq \frac{4}{\pi}\frac{1-|f(z)|^2}{1-|z|^2},$ for $z\in \U.$
This inequality is sharp for each $z\in\U$.

\item $\displaystyle
|\nabla f(z)|\leq \frac{4}{\pi}\frac{\cos(\frac{\pi}{2}f(z))}{1-|z|^2},$ for $z\in \U.$
This inequality is sharp for each $z\in\U$.

\end{enumerate}
\end{ThmG}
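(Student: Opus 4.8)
The plan is to pass to a holomorphic representative and then invoke the classical Schwarz--Pick lemma after a conformal change of the target. Since $\U$ is simply connected and $f$ is real-valued and harmonic, I would write $f=\operatorname{Re}F$ for a holomorphic $F:\,\U\to\C$, normalized so that $F(0)\in\IR$. The Cauchy--Riemann equations give $|\nabla f(z)|=|F'(z)|$ at every point, so the whole problem reduces to bounding $|F'|$. The hypothesis $-1<f<1$ says precisely that $F$ maps $\U$ into the vertical strip $S=\{w\in\C:\ |\operatorname{Re}w|<1\}$, and I would exploit this strip constraint rather than any boundary regularity of $f$, so the argument stays entirely interior.

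Next I would transport the strip to the disk. The map $\Phi(w)=\tan\!\big(\tfrac{\pi w}{4}\big)$ is a biholomorphism of $S$ onto $\U$ with $\Phi(0)=0$ and $\Phi'\neq0$, so $G:=\Phi\circ F:\,\U\to\U$ is holomorphic and the Schwarz--Pick lemma \eqref{eq-sp} applies to $G$, giving $|G'(z)|\le (1-|G(z)|^2)/(1-|z|^2)$. Writing $G'(z)=\Phi'(F(z))F'(z)$ and dividing by $|\Phi'(F(z))|$, I obtain
\[
|\nabla f(z)|=|F'(z)|\le \frac{1}{1-|z|^2}\cdot\frac{1-|\Phi(F(z))|^2}{|\Phi'(F(z))|}.
\]

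The crux, and the step I expect to be the main obstacle, is the cancellation identity
\[
\frac{1-|\Phi(w)|^2}{|\Phi'(w)|}=\frac{4}{\pi}\cos\!\Big(\frac{\pi}{2}\operatorname{Re}w\Big).
\]
I would verify it by setting $\tfrac{\pi w}{4}=\xi+\ii\eta$ with $\xi=\tfrac{\pi}{4}\operatorname{Re}w$: a direct computation yields $1-|\Phi(w)|^2=\cos(2\xi)/|\cos(\xi+\ii\eta)|^2$ and $|\Phi'(w)|=\tfrac{\pi}{4}\big/|\cos(\xi+\ii\eta)|^2$, so the factor $|\cos(\xi+\ii\eta)|^{2}$ cancels and only $\operatorname{Re}w$ survives. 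Substituting $\operatorname{Re}F(z)=f(z)$ then gives part (2) at every $z\in\U$. Part (1) follows at once from (2) and the elementary inequality $\cos(\tfrac{\pi}{2}t)\le 1-t^2$ on $[-1,1]$ (the two sides agree at $t\in\{0,\pm1\}$, and the ratio $\cos(\tfrac{\pi}{2}t)/(1-t^2)$ decreases from $1$ at $t=0$ to $\tfrac{\pi}{4}$ as $t\to1$), which is checked by routine calculus.

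Finally, for sharpness I would use the equality case of Schwarz--Pick: equality at a prescribed $z_0$ forces $G$ to be a disk automorphism $\psi$, so the extremals are $f=\operatorname{Re}(\Phi^{-1}\circ\psi)=\tfrac{4}{\pi}\operatorname{Re}\arctan(\psi)$, matching the extremal family of Theorem F. Choosing $\psi$ appropriately realizes equality in (2) at any $z_0\in\U$ with any prescribed value $f(z_0)\in(-1,1)$; for (1), since $\cos(\tfrac{\pi}{2}t)=1-t^2$ holds only at $t\in\{0,\pm1\}$, one selects the extremal with $f(z_0)=0$, which makes both bounds coincide and establishes the claimed sharpness for each $z$.
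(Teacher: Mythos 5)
Your proof is correct, and it is worth noting that the paper itself contains no proof of this statement: Theorem G is quoted as a known result from Kalaj--Vuorinen \cite{KV12} and Chen \cite{C13}, so there is no internal argument to compare against. Your route is self-contained and sound: writing $f=\operatorname{Re}F$ with $F$ holomorphic (so that $|\nabla f|=|F'|$ by the Cauchy--Riemann equations), observing that $F$ maps $\U$ into the strip $S=\{|\operatorname{Re}w|<1\}$, transporting $S$ to $\U$ by $\Phi(w)=\tan(\pi w/4)$, and applying the holomorphic Schwarz--Pick lemma (\ref{eq-sp}) to $\Phi\circ F$. This is in essence the classical hyperbolic-metric argument: your cancellation identity $\bigl(1-|\Phi(w)|^2\bigr)/|\Phi'(w)|=\tfrac{4}{\pi}\cos\bigl(\tfrac{\pi}{2}\operatorname{Re}w\bigr)$ is exactly the statement that the hyperbolic density of the strip is $\tfrac{\pi}{4}\sec\bigl(\tfrac{\pi}{2}\operatorname{Re}w\bigr)$, and I verified the computation: $1-|\tan\zeta|^2=\cos(2\xi)/|\cos\zeta|^2$ and $|\Phi'(w)|=\tfrac{\pi}{4}/|\cos\zeta|^2$ with $\zeta=\xi+\mathrm{i}\eta=\pi w/4$, so the factors cancel as you claim. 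The reduction of (1) to (2) via $\cos(\tfrac{\pi}{2}t)\le 1-t^2$ on $[-1,1]$ is valid (the difference vanishes to second order at $t=0$, is convex there, and is concave near $t=1$ with value $0$, so it is nonnegative throughout), and your sharpness discussion is accurate: equality in Schwarz--Pick forces $\Phi\circ F$ to be a disk automorphism $\psi$, which gives equality in (2) at every point, while for (1) one must additionally arrange $f(z_0)=0$ since $\cos(\tfrac{\pi}{2}t)=1-t^2$ only at $t\in\{0,\pm1\}$; the resulting extremals $\tfrac{4}{\pi}\operatorname{Re}\arctan(\psi)$ indeed coincide with the extremal family displayed in Theorem F (with $\phi=\mathrm{i}\psi$ there).
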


Chen and Gauthier \cite{CG11} generalized Theorem
F to pluriharmonic mappings between the
Euclidean unit balls. Later, Hamada and Kohr \cite{HK15} generalized Theorem
F to pluriharmonic mappings from  the unit ball
of a JB$^*$-triple into the unit ball {$\mathbb{B}^n_{a}$} of $\C^n$ with respect
to an arbitrary norm on $\C^n$.

For mappings with values in higher dimensional spaces,
Pavlovi\'c \cite{Pav1,P11} showed that the inequality (\ref{eq-sp}) does not hold for analytic functions $f$ of
$\mathbb{U}$ into $\mathbb{B}^{n}$, where $n\geq2$ is an integer.
However, Pavlovi\'c  proved the following Schwarz-Pick type lemma for analytic functions $f$ of
$\mathbb{U}$ into $\mathbb{B}^{n}$:

\[
|\nabla\|f(z)\|_{e}|\leq\frac{1-\|f(z)\|_{e}^{2}}{1-|z|^{2}},
\quad z\in\mathbb{U},
\]
where $\nabla\|f(z)\|_{e}$ denotes the  gradient of $\|f\|_{e}$.

In \cite{Z19}, Zhu established a  Schwarz-Pick type estimate for pluriharmonic mappings $f$ of
$\mathbb{B}^{n}$ into itself as follows.

\begin{ThmH}{\rm (\cite[Theorem 1.1]{Z19})}\label{Z-1}
For $n\geq1$, let $f$ be a pluriharmonic mapping of $\mathbb{B}^{n}$ into itself. Then the following inequality
$$|\nabla\|f(z)\|_{e}|\leq\frac{4\sqrt{n}}{\pi}\frac{1}{ (1-\|z\|_{e})}$$ holds for all $z\in\mathbb{B}^{n}.$
\end{ThmH}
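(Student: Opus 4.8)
The plan is to reduce the $n$-dimensional statement to the one-variable harmonic Schwarz--Pick inequality by slicing along complex lines, and to route the resulting disc estimate back through Proposition~\ref{Po-1.0}. Fix $z_0\in\mathbb{B}^{n}$ and write $r=\|z_0\|_{e}$. By Proposition~\ref{Po-1.0}, in both alternatives $f(z_0)=0$ and $f(z_0)\neq0$ one has $|\nabla\|f\|_{e}(z_0)|\leq\|Df(z_0)\|=\sup_{\|\beta\|_{e}=1}\|Df(z_0)\beta\|_{e}$, so it suffices to estimate this real-linear operator norm. To scalarise, fix a unit direction $\beta\in\C^{n}$, put $u=Df(z_0)\beta/\|Df(z_0)\beta\|_{e}$, and consider $\psi(z)=\langle f(z),u\rangle$. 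Since $|\psi(z)|\leq\|f(z)\|_{e}<1$ and $f$ is pluriharmonic, $\psi$ is a scalar pluriharmonic map of $\mathbb{B}^{n}$ into $\U$, and $\|Df(z_0)\beta\|_{e}=\langle Df(z_0)\beta,u\rangle=D\psi(z_0)\beta$.

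First I would restrict $\psi$ to the complex line through $z_0$ in the direction $\beta$. Because $\eta\mapsto z_0+\eta\beta$ is a holomorphic curve, $\lambda(\eta):=\psi(z_0+\eta\beta)$ is harmonic on the slice domain $\{\eta\in\C:\|z_0+\eta\beta\|_{e}<1\}$ with values in $\U$. The key geometric step is an elementary inradius computation: completing the square gives $\|z_0+\eta\beta\|_{e}^{2}=|\eta+\overline{\langle\beta,z_0\rangle}|^{2}+r^{2}-|\langle\beta,z_0\rangle|^{2}$, so the slice domain is a Euclidean disc centred at $-\overline{\langle\beta,z_0\rangle}$ of radius $\sqrt{1-r^{2}+|\langle\beta,z_0\rangle|^{2}}$; since $|\langle\beta,z_0\rangle|\leq r$, the distance from $\eta=0$ to its boundary is at least $1-r$, whence $\{|\eta|<1-r\}$ lies inside the slice domain. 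This is precisely where the factor $1/(1-\|z\|_{e})$ originates.

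Next I would apply the one-dimensional harmonic Schwarz--Pick inequality (Theorem F, Colonna) to the rescaled map $\mu(\sigma)=\lambda((1-r)\sigma)$, which is harmonic from $\U$ into $\U$. Evaluated at the centre, Colonna's bound is $|\mu_{\sigma}(0)|+|\mu_{\overline{\sigma}}(0)|\leq 4/\pi$ regardless of the value $\mu(0)$, so no normalisation of $\psi(z_0)$ is needed; undoing the rescaling gives $|\lambda_{\eta}(0)|+|\lambda_{\overline{\eta}}(0)|\leq\frac{4}{\pi}\frac{1}{1-r}$. Since $\|Df(z_0)\beta\|_{e}=D\psi(z_0)\beta=\lambda_{\eta}(0)+\lambda_{\overline{\eta}}(0)$ is the derivative of $\lambda$ along the real axis, this yields $\|Df(z_0)\beta\|_{e}\leq\frac{4}{\pi}\frac{1}{1-r}$. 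Taking the supremum over unit $\beta$ gives $\|Df(z_0)\|\leq\frac{4}{\pi}\frac{1}{1-\|z_0\|_{e}}$, and hence, by the first reduction, $|\nabla\|f\|_{e}(z_0)|\leq\frac{4}{\pi}\frac{1}{1-\|z_0\|_{e}}$, which already implies the stated inequality since $\sqrt{n}\geq1$. Alternatively, if one prefers to control the full real Jacobian columnwise and then pass to its Frobenius norm through \eqref{A-Frobenius-Operator}, the dimensional factor $\sqrt{n}$ is produced at this assembly step, recovering exactly the constant $\frac{4\sqrt{n}}{\pi}$.

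The steps demanding the most care are the geometric inradius lemma (the quadratic completion of the square and the sharpness of the bound $1-\|z_0\|_{e}$, attained only in the extremal direction $|\langle\beta,z_0\rangle|=r$) and the verification that pluriharmonicity descends to genuine harmonicity of $\lambda$ on each complex slice. The two alternatives of Proposition~\ref{Po-1.0} turn out to cause no real difficulty, because Colonna's estimate at the centre of the rescaled disc is insensitive to $\psi(z_0)$; indeed one could instead invoke the sharp real-valued estimate (Theorem G), replacing $4/\pi$ by $\frac{4}{\pi}(1-|\psi(z_0)|^{2})$, to obtain a quantitatively stronger bound. I expect the only genuinely lossy point to be the final passage to $\sqrt{n}$ via \eqref{A-Frobenius-Operator}: the scalar reduction above already delivers the $n$-independent constant $4/\pi$, so the factor $\sqrt{n}$ in the statement is an artefact of the coarser Frobenius assembly and is not needed.
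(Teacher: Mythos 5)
Your proposal is correct, and it in fact proves more than the quoted statement: a dimension-free bound $|\nabla\|f\|_{e}(z_0)|\leq\frac{4}{\pi}\frac{1}{1-\|z_0\|_{e}}$. Bear in mind that the paper does not prove Theorem H directly (it is quoted from Zhu); the paper's own related proof is that of its improvement, Theorem \ref{Schwarz-norm}, which gives the sharp estimate $|\nabla\|f\|_Y(z_0)|\leq\frac{4}{\pi}\frac{1}{1-\|z_0\|_X^2}$ for pluriharmonic maps of the unit ball of any JB$^*$-triple into the unit ball of a complex Banach space. Your argument shares its core with that proof --- reduce via Proposition \ref{Po-1.0} and a scalarizing functional to a harmonic map of the disc into itself, then invoke Colonna's Theorem F --- but the handling of a nonzero base point is genuinely different. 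The paper exploits homogeneity: it composes $f$ with the M\"obius automorphism $g_{z_0}$ of \eqref{Mobius}, applies the case $z_0=0$, and converts back using Kaup's identity \eqref{Kaup-estimate}, $\|Dg_{z_0}(0)^{-1}\|=1/(1-\|z_0\|_X^2)$; this is what produces the hyperbolic-type denominator $1-\|z_0\|^2$, and it works verbatim beyond the Euclidean setting, where your completion-of-the-square has no analogue. Your route instead slices along the affine complex line $z_0+\eta\beta$ and inscribes the disc $\{|\eta|<1-r\}$ in the slice domain via the inradius computation $\sqrt{1-r^2+|c|^2}-|c|\geq 1-r$ with $c=\langle\beta,z_0\rangle$; this is more elementary (no automorphisms, no Kaup estimate, no JB$^*$-machinery), but the inscribed disc is precisely the lossy step: it yields $1/(1-\|z_0\|_{e})$ rather than $1/(1-\|z_0\|_{e}^2)$. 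Since Theorem H only asserts the $1/(1-\|z\|_{e})$ bound, your estimate suffices, and your closing observation --- that the factor $\sqrt{n}$ arises only from assembling the gradient through the Frobenius norm via \eqref{A-Frobenius-Operator} and is not needed --- is exactly the content of the improvement the paper itself makes.
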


In \cite{CH20}, Chen and Hamada  improved Theorem H into the  sharp form for pluriharmonic mappings
of $\mathbb{B}^{n}$ into the unit ball of the Minkowski space.
In the following,
we will present a   relatively simple  method of proof to improve Theorem H and \cite[Theorem 2.2]{CH20}
to pluriharmonic mappings from the unit ball $\B_X$ of a
JB$^*$-triple $X$ to the unit ball of  a complex Banach space $Y$.

\begin{Thm}\label{Schwarz-norm}
Let $\B_X$ be a bounded symmetric domain realized as the unit ball of a JB$^*$-triple $X$.
Let $B_Y$ be the unit ball of a complex Banach space $Y$.
Also, let $f:\,\B_X\to B_Y$ be a pluriharmonic mapping.
Let $z_0\in \B_X$ be a point which satisfies one of the following conditions:
\begin{enumerate}
\item[(i)]
$f(z_0)=0$;
\item[(ii)]
$f(z_0)\neq 0$ and $\|f(z)\|_Y$ is differentiable at $z=z_0$.
\end{enumerate}
Then
\begin{equation}
\label{P-Schwarz-Pick-norm}
|\nabla \| f\|_Y(z_0)|\leq
\frac{4}{\pi}\frac{1}{1-\| z_0\|_X^2}.
\end{equation}
The estimate
{\rm (\ref{P-Schwarz-Pick-norm})} is sharp
for each $z_0\in \B_X$.
\end{Thm}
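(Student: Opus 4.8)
The plan is to reduce the Banach-space statement to the one-dimensional harmonic Schwarz-Pick lemma by slicing along the extremal direction and transporting the point $z_0$ to the origin with a Möbius automorphism of $\B_X$. First I would use the M\"{o}bius transformation $g_{z_0}$ of $\B_X$ (available since $\B_X$ is the unit ball of a JB$^*$-triple) to move $z_0$ to $0$: set $F=f\circ g_{z_0}$, which is again pluriharmonic on $\B_X$ with values in $B_Y$ and satisfies $F(0)=f(z_0)$. The quantity $|\nabla\|f\|_Y(z_0)|$ is, by Proposition \ref{Po-1.0}, controlled by the first-order data of $f$ at $z_0$, and the chain rule together with the identity $Dg_{z_0}(0)=B(z_0,z_0)^{1/2}$ converts derivatives of $F$ at $0$ into derivatives of $f$ at $z_0$. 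The crucial normalization is the Kaup estimate \eqref{Kaup-estimate}, which gives $\|Dg_{z_0}(0)^{-1}\|=1/(1-\|z_0\|_X^2)$; this is exactly where the factor $1/(1-\|z_0\|_X^2)$ on the right-hand side of \eqref{P-Schwarz-Pick-norm} enters.

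Next I would estimate $|\nabla\|f\|_Y(z_0)|$ itself. Using Proposition \ref{Po-1.0}, in case (i) we have $|\nabla\|f\|_Y(z_0)|=\|Df(z_0)\|$, and in case (ii) we have $|\nabla\|f\|_Y(z_0)|=\sup_{\|\beta\|_X=1}|l_{f(z_0)}(Df(z_0)\beta)|$ with $l_{f(z_0)}\in T(f(z_0))$. Fix a unit direction $\beta$ realizing (or nearly realizing) the supremum, and consider the disk map $\zeta\mapsto \varphi(\zeta):=l_{f(z_0)}\big(f(g_{z_0}(\zeta\,\gamma))\big)$ for a suitable unit vector $\gamma$ with $Dg_{z_0}(0)\gamma$ pointing in the direction $\beta$; then $\varphi$ is a complex-valued harmonic function on $\U$. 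By the definition of pluriharmonicity (restriction of $l\circ f$ to holomorphic curves is harmonic) and the fact that $\zeta\mapsto g_{z_0}(\zeta\gamma)$ is a holomorphic curve, $\varphi$ is harmonic, and since $\|f\|_Y<1$ together with $\|l_{f(z_0)}\|_{Y^*}=1$ we get $|\varphi|<1$, so $\varphi:\U\to\U$. Applying the real-valued sharp harmonic estimate of Theorem G(1) (or Colonna's Theorem F) to $\mathrm{Re}\,\varphi$ at $\zeta=0$ produces the bound $|\varphi_\zeta(0)|+|\varphi_{\overline\zeta}(0)|\leq \tfrac{4}{\pi}$, and the radial derivative of $\varphi$ at $0$ equals $l_{f(z_0)}(Df(z_0)\,Dg_{z_0}(0)\gamma)$. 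Combining this with the Kaup normalization of $\gamma$ yields \eqref{P-Schwarz-Pick-norm}.

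The main obstacle I anticipate is the bookkeeping at the interface between the Fr\'{e}chet-derivative formalism and the one-variable slice: one must verify carefully that the $|\nabla\|f\|_Y|$ functional, defined through directional difference quotients of the norm, agrees with the harmonic gradient bound of the slice $\varphi$, and that the $\tfrac{4}{\pi}$ constant is not degraded when passing between the real-part estimate of Theorem G and the full complex harmonic map. Concretely, the delicate point is choosing $\gamma$ so that $B(z_0,z_0)^{1/2}\gamma$ is a unit vector in the optimal direction $\beta$ \emph{after} renormalization; the length distortion is precisely $\|Dg_{z_0}(0)^{-1}\|$, and one must confirm that taking the supremum over $\|\beta\|_X=1$ and the supremum over $l_u\in T(u)$ interacts correctly with this distortion so that no extra constant appears.

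Finally, for sharpness I would exhibit, for each $z_0\in\B_X$, an explicit extremal pluriharmonic map. The natural candidate is to pull back Colonna's one-dimensional extremal function $\zeta\mapsto \tfrac{2}{\pi}\arg\!\big(\tfrac{1+\phi(\zeta)}{1-\phi(\zeta)}\big)$ (from Theorem F) along the slice $\zeta\mapsto g_{z_0}(\zeta\gamma)$ for the optimal direction, composed with a fixed unit functional on $Y$ to land in $B_Y$; evaluating at $z_0$ and using that Colonna's example attains $\tfrac{4}{\pi}\cdot\tfrac{1}{1-|\zeta|^2}$ at every interior point shows that equality holds in \eqref{P-Schwarz-Pick-norm} at the chosen $z_0$.
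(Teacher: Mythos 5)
Your proof of the upper bound \eqref{P-Schwarz-Pick-norm} is correct and is essentially the paper's own argument: the paper first proves the case $z_0=0$ by slicing (it applies Colonna's Theorem F to $F(\zeta)=l(f(\zeta\beta))$), and then treats general $z_0$ via the chain-rule inequality $|\nabla \| f\|_Y(z_0)|\leq |\nabla \| f\circ g_{z_0}\|_Y(0)|\cdot \| Dg_{z_0}(0)^{-1}\|$ together with Kaup's estimate \eqref{Kaup-estimate}; you merge these two steps by slicing directly through $g_{z_0}$, which amounts to the same computation with the same ingredients (Proposition \ref{Po-1.0}, Theorem F, the M\"obius transformation, Kaup). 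One caution: applying Theorem G to $\mathrm{Re}\,\varphi$ alone does \emph{not} bound $|\varphi_{\zeta}(0)|+|\varphi_{\overline{\zeta}}(0)|$; you would need all rotations $\mathrm{Re}(\lambda\varphi)$, $|\lambda|=1$, or simply Theorem F applied to the complex-valued harmonic map $\varphi$, which is your parenthetical alternative and is what the paper does.

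The sharpness part, however, has a genuine gap. Your key claim that Colonna's extremal function ``attains $\frac{4}{\pi}\cdot\frac{1}{1-|\zeta|^2}$ at every interior point'' is false: for $\psi(\zeta)=\frac{2}{\pi}\arg\left(\frac{1+\zeta}{1-\zeta}\right)$ one computes $|\psi_{\zeta}(\zeta)|+|\psi_{\overline{\zeta}}(\zeta)|=\frac{4}{\pi}\frac{1}{|1-\zeta^2|}$, which equals $\frac{4}{\pi}\frac{1}{1-|\zeta|^2}$ only for real $\zeta$; no single extremal of Theorem F attains the bound everywhere (equality at a point forces the underlying automorphism to take a real value there), so sharpness requires choosing the extremal \emph{adapted to the point}. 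Moreover, ``pulling back along the slice $\zeta\mapsto g_{z_0}(\zeta\gamma)$'' does not by itself define a mapping on $\B_X$: a slice maps $\U$ into $\B_X$, and to extend the disk extremal to all of $\B_X$ you need a norm-one linear functional on $X$; if you instead compose with $g_{z_0}^{-1}$, exact (not merely approximate) equality additionally requires that $\| Dg_{z_0}(0)^{-1}\|$ be attained in a direction compatible with that functional, which you do not justify and which is delicate in infinite dimensions. The paper's construction avoids both issues: it takes $f(z)=\phi(l_{w_0}(z))a$ with $l_{w_0}\in T(w_0)$, $w_0=z_0/\| z_0\|_X$, $a\in \partial B_Y$, where $\phi$ is a Colonna extremal attaining equality at the \emph{real} interior point $\| z_0\|_X=l_{w_0}(z_0)$ (no M\"obius map is used); then $|\nabla \| f\|_Y(z_0)|=\sup_{\|\beta\|_X=1}\left|\phi_{\zeta}(\| z_0\|_X)l_{w_0}(\beta)+\phi_{\overline{\zeta}}(\| z_0\|_X)\overline{l_{w_0}(\beta)}\right|=|\phi_{\zeta}(\| z_0\|_X)|+|\phi_{\overline{\zeta}}(\| z_0\|_X)|=\frac{4}{\pi}\frac{1}{1-\| z_0\|_X^2}$. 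Your argument can be repaired exactly along these lines (since $\| z_0\|_X$ is real, the standard extremal does attain the bound there), but as written the justification fails.
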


In particular, if $f$ is a pluriharmonic mapping of $\B_X$ into the unit ball $\mathbf{B}_Y$ of a real Banach space $Y$, then we have
a better estimate as follows.
Here we omit the proof because it suffices to use Theorem F  instead of Theorem G and use  arguments similar to those in the proof of Theorem \ref{Schwarz-norm}.

\begin{Thm}\label{R-CH}
\label{Schwarz-norm-real}
Let $\B_X$ be a bounded symmetric domain realized as the unit ball of a JB$^*$-triple $X$.
Let $\mathbf{B}_Y$ be the unit ball of a real Banach space $Y$.
Also, let $f:\,\B_X\to \mathbf{B}_Y$ be a pluriharmonic mapping.
Let $z_0\in \B_X$ be a point which satisfies one of the following conditions:
\begin{enumerate}
\item[(i)]
$f(z_0)=0$;
\item[(ii)]
$f(z_0)\neq 0$ and $\|f(z)\|_Y$ is differentiable at $z=z_0$.
\end{enumerate}
Then  we have the following estimates.
\begin{enumerate}
\item[(1)]
$\displaystyle  |\nabla \| f\|_Y(z_0)|\leq \frac{4}{\pi}\frac{1-\| f(z_0)\|_Y^2}{1-\| z_0\|_X^2}.$

\item[(2)]
$\displaystyle |\nabla \| f\|_Y(z_0)|\leq
\frac{4}{\pi}\frac{\cos(\frac{\pi}{2}\| f(z_0)\|_Y)}{1-\| z_0\|_X^2}.
$
\end{enumerate}
The above estimates are sharp for each $z_0\in \B_X$.
\end{Thm}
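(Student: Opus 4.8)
The plan is to reduce the estimate at an arbitrary point $z_0$ to the corresponding estimate at the origin by pre-composing with a Möbius automorphism, and then to reduce the estimate at the origin to the one-variable inequalities of Theorem G by slicing along complex lines and testing against a norming functional. Throughout I would use Proposition \ref{Po-1.0}, which expresses $|\nabla\|f\|_Y(z_0)|$ either as $\|Df(z_0)\|$ (when $f(z_0)=0$) or as $\sup_{\|\beta\|_X=1}|l_{f(z_0)}(Df(z_0)\beta)|$ (when $f(z_0)\neq0$), with $l_{f(z_0)}\in T(f(z_0))$; this is exactly the device that lets Theorem G, a statement about real-valued harmonic functions, be applied to the scalar functions $l(f(\cdot))$.

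For the reduction to the origin, since $\B_X$ is the unit ball of a JB$^*$-triple it is homogeneous, so there is a Möbius automorphism $g_{z_0}$ of $\B_X$ with $g_{z_0}(0)=z_0$ and $Dg_{z_0}(0)=B(z_0,z_0)^{1/2}$. Set $F=f\circ g_{z_0}$; then $F$ is again a pluriharmonic mapping of $\B_X$ into $\mathbf{B}_Y$ with $F(0)=f(z_0)$, because pre-composition with a holomorphic map carries holomorphic curves to holomorphic curves. By the chain rule $Df(z_0)=DF(0)\,Dg_{z_0}(0)^{-1}$, and by \eqref{Kaup-estimate} we have $\|Dg_{z_0}(0)^{-1}\|=1/(1-\|z_0\|_X^2)$. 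Writing $\gamma=Dg_{z_0}(0)^{-1}\beta$ and using $\|\gamma\|_X\le\|Dg_{z_0}(0)^{-1}\|\,\|\beta\|_X$, then taking suprema and invoking Proposition \ref{Po-1.0} for both $f$ at $z_0$ and $F$ at $0$ (note $f(z_0)=F(0)$, so the same norming functional is used), yields
\[
|\nabla\|f\|_Y(z_0)|\le\frac{1}{1-\|z_0\|_X^2}\,|\nabla\|F\|_Y(0)|.
\]

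It then remains to bound $|\nabla\|F\|_Y(0)|$ by $\tfrac{4}{\pi}(1-\|F(0)\|_Y^2)$ and by $\tfrac{4}{\pi}\cos(\tfrac\pi2\|F(0)\|_Y)$. Fix a unit vector $\beta\in X$ and a norming functional $l\in Y^*$ (take $l=l_{F(0)}$ when $F(0)\neq0$, and $l=l_{DF(0)\beta}$ when $F(0)=0$), and set $\psi_\beta(\zeta)=l(F(\zeta\beta))$ for $\zeta\in\U$. Because $Y$ is a real Banach space, $l$ is real-valued, so $\psi_\beta$ is a real-valued harmonic function on $\U$ (the restriction of $l(F(\cdot))$ to the holomorphic line $\zeta\mapsto\zeta\beta$); moreover $|\psi_\beta(\zeta)|\le\|F(\zeta\beta)\|_Y<1$, so $\psi_\beta:\U\to(-1,1)$, and $\psi_\beta(0)=l(F(0))$ equals $\|F(0)\|_Y$ in case (ii) and $0$ in case (i). Applying Theorem G to $\psi_\beta$ at $\zeta=0$ bounds $|\nabla\psi_\beta(0)|$ by $\tfrac{4}{\pi}(1-\psi_\beta(0)^2)$ and by $\tfrac{4}{\pi}\cos(\tfrac\pi2\psi_\beta(0))$. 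Finally, computing $\partial_x\psi_\beta(0)=l(DF(0)\beta)$ and $\partial_y\psi_\beta(0)=l(DF(0)(i\beta))$ gives $|l(DF(0)\beta)|\le|\nabla\psi_\beta(0)|$; taking the supremum over $\|\beta\|_X=1$ and recalling Proposition \ref{Po-1.0} produces the two bounds on $|\nabla\|F\|_Y(0)|$, which combined with the displayed inequality give estimates (1) and (2).

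For sharpness, for each fixed $z_0$ I would exhibit extremals by pulling back, along a suitably chosen unit direction, the one-variable functions attaining equality in Theorem G, composed with the disk automorphism; the direction should be chosen so that it simultaneously saturates $\|Dg_{z_0}(0)^{-1}\|$. The main obstacle I anticipate is precisely this simultaneous saturation: the Möbius-reduction step loses information because $Dg_{z_0}(0)^{-1}$ is not an isometry, so making both the operator-norm inequality and the Theorem G inequality into equalities at once requires aligning the extremal slice with a norm-attaining direction of $Dg_{z_0}(0)^{-1}$ (for instance a tripotent direction) and verifying that the resulting mapping indeed takes values in $\mathbf{B}_Y$.
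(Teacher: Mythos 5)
Your derivation of estimates (1) and (2) is correct and is exactly the route the paper intends: the paper omits this proof, saying one should repeat the proof of Theorem \ref{Schwarz-norm} with the real-variable Schwarz--Pick lemma (Theorem G) in place of Colonna's theorem (Theorem F) --- the paper's sentence transposes the two names, evidently a typo --- and your argument (Proposition \ref{Po-1.0}, the real-valued slices $\psi_\beta(\zeta)=l(F(\zeta\beta))$ which land in $(-1,1)$ because $Y$ is a real Banach space, Theorem G at the origin, then the reduction $F=f\circ g_{z_0}$ combined with \eqref{Kaup-estimate}) is precisely that.

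The sharpness part, however, is a genuine gap: you only sketch a plan and yourself flag the obstacle (simultaneously saturating $\|Dg_{z_0}(0)^{-1}\|$ and the Theorem G inequality along one slice) without resolving it. That obstacle is self-inflicted: it arises only if you insist on producing the extremal by pushing a mapping through the M\"obius reduction. The paper's construction (the one in the proof of Theorem \ref{Schwarz-norm}, which carries over verbatim) never touches $g_{z_0}$. For $z_0\neq 0$ put $w_0=z_0/\|z_0\|_X$, fix $l_{w_0}\in T(w_0)$ and $a\in\partial\mathbf{B}_Y$, let $u(\zeta)=\frac{2}{\pi}\arg\frac{1+\zeta}{1-\zeta}$ (harmonic from $\U$ onto $(-1,1)$, vanishing on $(-1,1)\cap\mathbb{R}$), and set $f(z)=u(l_{w_0}(z))\,a$. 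Then $f:\B_X\to\mathbf{B}_Y$ is pluriharmonic (each $l(f(\cdot))=u(l_{w_0}(\cdot))\,l(a)$ restricts harmonically to holomorphic curves), and $f(z_0)=u(\|z_0\|_X)a=0$ because $\|z_0\|_X$ is real. By Proposition \ref{Po-1.0} (case (i)) and $u_{\overline{\zeta}}=\overline{u_{\zeta}}$,
\[
|\nabla \| f\|_Y(z_0)|
=\sup_{\| \beta\|_X=1}\left|u_{\zeta}(\| z_0\|_X)\,l_{w_0}(\beta)+u_{\overline{\zeta}}(\| z_0\|_X)\,\overline{l_{w_0}(\beta)}\right|
=|u_{\zeta}(\| z_0\|_X)|+|u_{\overline{\zeta}}(\| z_0\|_X)|
=\frac{4}{\pi}\,\frac{1}{1-\| z_0\|_X^2},
\]
the supremum being attained at $\beta=e^{i\theta}w_0$ for a suitable $\theta$ (using $l_{w_0}(w_0)=1$), and the last equality being the classical computation $|\nabla u(\zeta)|=\frac{4}{\pi}|1-\zeta^2|^{-1}$ at the real point $\zeta=\|z_0\|_X$. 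Since $f(z_0)=0$, the right-hand sides of (1) and (2) both equal $\frac{4}{\pi}(1-\|z_0\|_X^2)^{-1}$, so this single mapping attains equality in both; the case $z_0=0$ is identical with any unit vector $w_0$. (Note also that equality in (1) forces $f(z_0)=0$, since $\cos(\frac{\pi}{2}t)\leq 1-t^2$ on $[0,1]$ with equality only at $t=0,1$; so no extremal with $f(z_0)\neq0$ exists for (1), and your plan to prescribe a general boundary value at the extremal slice was not needed.) No alignment with norm-attaining or tripotent directions of $Dg_{z_0}(0)^{-1}$ enters anywhere.
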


We remark that Theorem \ref{R-CH} is an improvement and generalization of \cite[Theorem 2.3]{CH20}.

In \cite{Z19},
Zhu established some other Schwarz-Pick type estimates as follows.

\begin{ThmI} {\rm (\cite[Theorem 1.2]{Z19})}
\label{Zhu1.2}
Let $f=h+\overline{g}$ be a pluriharmonic self-mapping of the unit ball $\mathbb{B}^n$ of $\C^n$,
where $h$ and $g$ are holomorphic mappings of  $\mathbb{B}^n$  into $\C^n$.
Then
\be
\label{1.4j}
\| Dh(z)e_j\|_e^2+\| Dg(z)e_j\|_e^2
\leq
\frac{1-\| f(z)\|_e^2}{(1-\| z\|_e)^2},
\quad
j=1,\ldots, n,
\ee
where $e_1, \dots, e_n$ is the usual orthonormal basis of $\C^n$,
and
\be
\label{1.5j}
\|\nabla f(z)\|_F^2
\leq
\frac{2n(1-\| f(z)\|_e^2)}{(1-\| z\|_e)^2}.
\ee
\end{ThmI}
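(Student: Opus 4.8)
The plan is to reduce both estimates to a single coefficient inequality at the center of the disk, prove that inequality by Parseval's identity for vector-valued harmonic maps, and then transport it to an arbitrary point by slicing along an affine analytic disk in the direction $e_j$.

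I would first dispose of the bookkeeping that links the two displayed inequalities. Since $g$ is holomorphic, differentiating $f=h+\overline{g}$ gives $\partial f/\partial z_j=Dh(z)e_j$ and $\partial f/\partial\overline{z}_j=\overline{Dg(z)e_j}$, whence $\|\partial f/\partial z_j\|_e=\|Dh(z)e_j\|_e$ and $\|\partial f/\partial\overline{z}_j\|_e=\|Dg(z)e_j\|_e$. Writing $\partial/\partial x_j=\partial/\partial z_j+\partial/\partial\overline{z}_j$ and $\partial/\partial y_j=i(\partial/\partial z_j-\partial/\partial\overline{z}_j)$ and invoking the parallelogram law yields
\[
\left\|\frac{\partial f}{\partial x_j}\right\|_e^2+\left\|\frac{\partial f}{\partial y_j}\right\|_e^2=2\left(\|Dh(z)e_j\|_e^2+\|Dg(z)e_j\|_e^2\right),
\]
so that $\|\nabla f(z)\|_F^2=2\sum_{j=1}^n\left(\|Dh(z)e_j\|_e^2+\|Dg(z)e_j\|_e^2\right)$. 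Hence (\ref{1.5j}) follows from (\ref{1.4j}) by summing over $j$, and the whole theorem reduces to (\ref{1.4j}).

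The heart of the argument is the following \emph{center estimate}: if $u=H+\overline{G}\colon\U\to\mathbb{B}^n$ is harmonic with $H,G\colon\U\to\C^n$ holomorphic, then $\|H'(0)\|_e^2+\|G'(0)\|_e^2\leq 1-\|u(0)\|_e^2$. To prove it I expand $u$ in a vector Fourier series $u(re^{i\theta})=\sum_{k\in\Z}C_kr^{|k|}e^{ik\theta}$ with $C_k\in\C^n$, so that $C_0=u(0)$, $C_1=H'(0)$ and $C_{-1}=\overline{G'(0)}$. Because $\|u\|_e^2=\sum_l|u_l|^2$ is subharmonic and bounded by $1$, its circular means are nondecreasing in $r$, and Parseval gives $\sum_{k\in\Z}\|C_k\|_e^2r^{2|k|}=\frac{1}{2\pi}\int_0^{2\pi}\|u(re^{i\theta})\|_e^2\,d\theta\leq 1$ for all $r<1$; letting $r\to 1^-$ and discarding all but the terms $k=-1,0,1$ gives $\|C_1\|_e^2+\|C_{-1}\|_e^2\leq 1-\|C_0\|_e^2$, which is the claim.

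To reach a general $z\in\mathbb{B}^n$, fix $j$ and set $\psi(\zeta)=z+(1-\|z\|_e)\zeta e_j$. The triangle inequality gives $\|\psi(\zeta)\|_e\leq\|z\|_e+(1-\|z\|_e)|\zeta|<1$ for $\zeta\in\U$, so $u:=f\circ\psi$ is a harmonic map of $\U$ into $\mathbb{B}^n$ with $u(0)=f(z)$, whose holomorphic and antiholomorphic parts are $H=h\circ\psi$ and $\overline{G}=\overline{g\circ\psi}$. By the chain rule $H'(0)=(1-\|z\|_e)Dh(z)e_j$ and $G'(0)=(1-\|z\|_e)Dg(z)e_j$; substituting into the center estimate and dividing by $(1-\|z\|_e)^2$ produces exactly (\ref{1.4j}). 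The one place that needs care is the passage $r\to 1^-$ in the center estimate: one must justify applying Parseval at each radius $r<1$ and taking the limit term by term, which follows from the subharmonicity of $\|u\|_e^2$ and monotone convergence. Everything else is the elementary identities above; I note in passing that the non-sharp denominator $(1-\|z\|_e)^2$ is precisely the cost of slicing along the flat disk $\psi$ rather than a maximal disk tangent to $\partial\mathbb{B}^n$.
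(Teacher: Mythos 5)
Your proof is correct, and it is worth noting that the paper itself does not prove Theorem I: it quotes the result from Zhu \cite{Z19} and instead proves improvements of it (Theorem \ref{Schwarz-Hilbert} and Corollary \ref{Frobenius norm}). Your argument is sound at every step: the parallelogram identity correctly gives $\|\nabla f(z)\|_F^2=2\sum_{j=1}^n\left(\|Dh(z)e_j\|_e^2+\|Dg(z)e_j\|_e^2\right)$ (this is the paper's relation \eqref{Frobenius2}), so \eqref{1.5j} does follow from \eqref{1.4j} by summation over $j$; the Parseval plus monotone-convergence argument yields the center estimate $\|H'(0)\|_e^2+\|G'(0)\|_e^2\leq 1-\|u(0)\|_e^2$, which is the one-variable, origin case of the Chen--Hamada estimate (Theorem K); and the affine slice $\psi(\zeta)=z+(1-\|z\|_e)\zeta e_j$ indeed stays in $\mathbb{B}^n$ and transports the center estimate to $z$ with the factor $(1-\|z\|_e)^2$. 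The comparison with the paper's own machinery is instructive: in the proof of Theorem \ref{Schwarz-Hilbert}, the origin estimate is obtained by applying Theorem K to $F(\zeta_1,\dots,\zeta_m)=f(\zeta_1w_1,\dots,\zeta_mw_m)$, and the passage to a general point is done by composing with the M\"obius automorphism $g_z$ of the ball and invoking Kaup's estimate \eqref{Kaup-estimate}, $\|Dg_z(0)^{-1}\|=1/(1-\|z\|_X^2)$; this is precisely what upgrades the denominator to $(1-\|z\|_X^2)^2$ and leads to a sharp statement. Your flat-disk transport is more elementary and completely self-contained (no polydisc lemma, no M\"obius maps), but, as you yourself observe, it is exactly what caps the denominator at the weaker $(1-\|z\|_e)^2$ --- which is all that Theorem I claims, so nothing is missing from your proof of the stated result.
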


\begin{ThmJ}{\rm (\cite[Theorem 1.3]{Z19})}
\label{Zhu1.3}
Let $f=h+\overline{g}$ be a pluriharmonic self-mapping of $\mathbb{B}^{n}$,
where $h$ and $g$ are holomorphic mappings of $\mathbb{B}^{n}$
into $\C^n$.
Assume that $h$ is locally biholomorphic
on $\mathbb{B}^{n}$ and
$\| \omega_f\|\leq k<1$ holds on $\mathbb{B}^{n}$.
Let $K=(1+k)/(1-k)$.
Then the following inequalities hold for all $z\in \mathbb{B}^{n}$:
\be
\label{1.6}
\| Dh(z)\|+\|Dg(z)\|
\leq
\frac{2K}{K+1}\frac{\sqrt{n(1-\| f(z)\|_e^2)}}{1-\| z\|_e},
\ee
\be
\label{1.7}
|\nabla \| f\|_e(z)|
\leq
\sqrt{n}(\|Dh(z)\|+\|Dg(z)\|)
\ee
and
\be
\label{1.8}
\|\nabla  f(z)\|_F
\leq
\sqrt{2n(\|Dh(z)\|^2+\|Dg(z)\|^2)}.
\ee
\end{ThmJ}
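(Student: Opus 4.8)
The plan is to derive all three estimates from the single-direction Schwarz--Pick bound recorded in Theorem I, together with the two elementary comparisons between the operator norm and the Frobenius norm and the characterization of $|\nabla\|f\|_{e}|$ in Proposition \ref{Po-1.0}. A preliminary algebraic simplification is useful: since $K=(1+k)/(1-k)$ one has $K+1=2/(1-k)$, hence $\frac{2K}{K+1}=1+k$, so that \eqref{1.6} is equivalent to
\[
\|Dh(z)\|+\|Dg(z)\|\le (1+k)\,\frac{\sqrt{n\left(1-\|f(z)\|_{e}^{2}\right)}}{1-\|z\|_{e}}.
\]
Because $\omega_{f}=Dg[Dh]^{-1}$ with $\|\omega_{f}\|\le k$, we have $Dg(z)=\omega_{f}(z)\,Dh(z)$ and therefore $\|Dg(z)\|\le k\|Dh(z)\|$; consequently $\|Dh(z)\|+\|Dg(z)\|\le (1+k)\|Dh(z)\|$, and it remains only to bound the single operator norm $\|Dh(z)\|$.

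For that bound I would pass to the Frobenius norm in the easy direction $\|Dh(z)\|\le\|Dh(z)\|_{F}$. Writing $\|Dh(z)\|_{F}^{2}=\sum_{j=1}^{n}\|Dh(z)e_{j}\|_{e}^{2}$ and discarding the nonnegative summands $\|Dg(z)e_{j}\|_{e}^{2}$, inequality \eqref{1.4j} of Theorem I gives $\|Dh(z)e_{j}\|_{e}^{2}\le \frac{1-\|f(z)\|_{e}^{2}}{(1-\|z\|_{e})^{2}}$ for each $j$, whence $\|Dh(z)\|_{F}^{2}\le \frac{n(1-\|f(z)\|_{e}^{2})}{(1-\|z\|_{e})^{2}}$. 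Combining these bounds proves \eqref{1.6}.

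For \eqref{1.8} I would run a short Wirtinger computation. Since $h,g$ are holomorphic and $z_{j}=x_{j}+iy_{j}$, one has $\partial f/\partial x_{j}=Dh(z)e_{j}+\overline{Dg(z)e_{j}}$ and $\partial f/\partial y_{j}=i\bigl(Dh(z)e_{j}-\overline{Dg(z)e_{j}}\bigr)$; adding the squared Euclidean norms, the cross terms $\pm 2\,\mathrm{Re}\,\langle Dh(z)e_{j},\overline{Dg(z)e_{j}}\rangle$ cancel, leaving $\|\partial f/\partial x_{j}\|_{e}^{2}+\|\partial f/\partial y_{j}\|_{e}^{2}=2\bigl(\|Dh(z)e_{j}\|_{e}^{2}+\|Dg(z)e_{j}\|_{e}^{2}\bigr)$. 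Summing over $j$ yields the exact identity $\|\nabla f(z)\|_{F}^{2}=2\bigl(\|Dh(z)\|_{F}^{2}+\|Dg(z)\|_{F}^{2}\bigr)$, and now I would apply the inequality $\|A\|_{F}^{2}\le n\|A\|^{2}$ from \eqref{A-Frobenius-Operator} to each of $Dh(z)$ and $Dg(z)$, obtaining $\|\nabla f(z)\|_{F}^{2}\le 2n\bigl(\|Dh(z)\|^{2}+\|Dg(z)\|^{2}\bigr)$ and hence \eqref{1.8}. Note that the operator/Frobenius comparison is used here in the opposite direction to its use for \eqref{1.6}, so keeping the two directions straight is the one place that requires care.

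Finally \eqref{1.7} would follow from Proposition \ref{Po-1.0}. In the case $f(z)\neq 0$ the quantity $|\nabla\|f\|_{e}(z)|$ equals $\sup_{\|\beta\|_{e}=1}|l_{f(z)}(Df(z)\beta)|$, and since $l_{f(z)}$ has norm $1$ and $Df(z)\beta=Dh(z)\beta+\overline{Dg(z)\beta}$, the triangle inequality gives $|l_{f(z)}(Df(z)\beta)|\le\|Dh(z)\beta\|_{e}+\|Dg(z)\beta\|_{e}\le\|Dh(z)\|+\|Dg(z)\|$ (the case $f(z)=0$ is identical, using $|\nabla\|f\|_{e}(z)|=\|Df(z)\|$). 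As $\sqrt{n}\ge 1$, this already yields \eqref{1.7}. Overall I expect the Wirtinger identity behind \eqref{1.8} to be the only genuinely computational step; everything else is an assembly of Theorem I, the two norm comparisons, and Proposition \ref{Po-1.0}.
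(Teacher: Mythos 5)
Your proof is correct, but the comparison with the paper is a little unusual: the paper never proves Theorem J itself (it is quoted from \cite{Z19}); what the paper proves are generalizations and improvements of its three inequalities, so those proofs are the right benchmark. For \eqref{1.7} and \eqref{1.8} your arguments essentially coincide with the paper's. Proposition \ref{prop} proves the sharper bound $|\nabla \| f\|_e(z)|\leq \| Dh(z)\|+\| Dg(z)\|$ (no $\sqrt{n}$) exactly as you do, via $|\nabla \| f\|_e(z)|\leq \| Df(z)\|\leq \| Dh(z)\|+\| Dg(z)\|$ and Proposition \ref{Po-1.0}; and Proposition \ref{prop1.8} derives \eqref{1.8} from the identity \eqref{Frobenius2} together with \eqref{A-Frobenius-Operator} --- your Wirtinger/parallelogram computation is precisely the proof of that identity, which the paper invokes without writing out. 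For \eqref{1.6}, however, your route is genuinely different from what the paper does. You recover Zhu's inequality verbatim from the quoted Theorem I: discard the $\| Dg(z)e_j\|_e^2$ terms in \eqref{1.4j}, pass through the Frobenius norm via $\| Dh(z)\|\leq \| Dh(z)\|_F$, and reinstate $Dg$ through $\| Dg(z)\|\leq k\| Dh(z)\|$ together with $2K/(K+1)=1+k$; this is a clean, self-contained assembly that yields exactly the stated bound. The paper instead proves Theorem \ref{Schwarz-quasiregular-Hilbert}, which is strictly stronger (constant $2K/\sqrt{2(K^2+1)}=(1+k)/\sqrt{1+k^2}$, no dimensional factor, denominator $1-\| z\|_X^2$ rather than $1-\| z\|_e$): it applies its own Theorem \ref{Schwarz-Hilbert} with $m=1$ to get $\| Dh(z)w\|_e^2+\| Dg(z)w\|_e^2\leq (1-\| f(z)\|_e^2)/(1-\| z\|_X^2)^2$ for every unit vector $w$, writes $\| Dg(z)w\|_e=\eta_w(z)\| Dh(z)w\|_e$ with $\eta_w(z)\in[0,k]$, and maximizes $(1+t)/\sqrt{1+t^2}$. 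The contrast shows exactly where your argument loses: the $\sqrt{n}$ enters only because you route through the Frobenius norm to use the column-wise form of \eqref{1.4j}, and the factor $1+k$ is what remains after throwing away the $Dg$ terms; keeping both terms in the vector bound, as the paper does, removes both losses at the cost of the slightly more delicate splitting argument.
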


Chen and Hamada \cite{CH20}  generalized and improved Theorem I into the following sharp form on pluriharmonic
mappings of the polydisc $\U^{n}$ in $\mathbb{C}^{n}$ into the Euclidean unit ball $\mathbb{B}^{m}$ in $\mathbb{C}^{m}$.

\begin{ThmK}\label{thm-3+}
For $n\geq1$, let
$f=(f_{1},\ldots,f_{m}):\, \U^{n}\to\mathbb{B}^{m}$ be a pluriharmonic
mapping, where   $m$ is a positive integer.
Then, for $z=(z_{1},\ldots,z_{n})\in\U^{n}$, we have
\begin{equation}\label{qq-ch1.01g}
\sum_{j=1}^{m}\sum_{k=1}^{n}\left(\left|\frac{\partial f_{j}(z)}{\partial z_{k}}\right|^{2}+
\left|\frac{\partial f_{j}(z)}{\partial\overline{z}_{k}}\right|^{2}\right)
\leq\frac{1-\|f(z)\|_{e}^{2}}{(1-\|z\|_{\infty}^{2})^{2}},
\end{equation} where $\|z\|_{\infty}=\max_{1\leq j\leq n}|z_{j}|$.
Moreover, the inequality {\rm (\ref{qq-ch1.01g})} is sharp for each $z\in \U^{n}$
with $|z_1|=\cdots =|z_n|$.
\end{ThmK}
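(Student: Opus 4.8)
The plan is to transfer the estimate at a general point $z\in\U^n$ to the origin by precomposing with a polydisc automorphism, and then to establish the resulting estimate at $0$ by a Parseval computation over the distinguished boundary $\T^n$.

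First I would set up the reduction. Since $\U^n$ is simply connected, write the pluriharmonic $f$ as $f=h+\overline{g}$ with $h,g:\,\U^n\to\C^m$ holomorphic; then $\partial f_j/\partial z_k=\partial h_j/\partial z_k$ and $\partial f_j/\partial\overline{z}_k=\overline{\partial g_j/\partial z_k}$, so the left-hand side of \eqref{qq-ch1.01g} equals $\sum_{j,k}\big(|\partial h_j/\partial z_k(z)|^2+|\partial g_j/\partial z_k(z)|^2\big)$. For a fixed $z=(z_1,\dots,z_n)$, let $\phi_z$ be the automorphism of $\U^n$ whose $k$th component is $w_k\mapsto(w_k+z_k)/(1+\overline{z_k}w_k)$, so that $\phi_z(0)=z$, and set $F=f\circ\phi_z=H+\overline{G}$ with $H=h\circ\phi_z$, $G=g\circ\phi_z$. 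Then $F:\,\U^n\to\mathbb{B}^m$ is pluriharmonic with $F(0)=f(z)$, and since each component of $\phi_z$ involves only one variable, the chain rule yields $\partial H_j/\partial w_k(0)=(1-|z_k|^2)\,\partial h_j/\partial z_k(z)$ and likewise for $G$.

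The core is the estimate at the origin: for any pluriharmonic $F=H+\overline{G}:\,\U^n\to\mathbb{B}^m$ one has $\sum_{j,k}\big(|\partial H_j/\partial w_k(0)|^2+|\partial G_j/\partial w_k(0)|^2\big)\le 1-\|F(0)\|_e^2$. To see this, expand $H(w)=\sum_{\alpha\ge0}a_\alpha w^\alpha$ and $G(w)=\sum_{\gamma\ge0}b_\gamma w^\gamma$ with $a_\alpha,b_\gamma\in\C^m$. On $\T^n$ the holomorphic part $H$ contributes only frequencies $\beta\ge0$ and the antiholomorphic part $\overline{G}$ only frequencies $\le0$, overlapping solely at $\beta=0$ where the coefficient is $F(0)=a_0+\overline{b_0}$, while all mixed frequencies carry coefficient $0$. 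Hence the frequency supports are disjoint and the vector-valued Parseval identity gives
\[
\int_{\T^n}\|F(re^{i\theta})\|_e^2\,\frac{d\theta}{(2\pi)^n}=\|F(0)\|_e^2+\sum_{\beta\ge0,\,\beta\neq0}r^{2|\beta|}\big(\|a_\beta\|_e^2+\|b_\beta\|_e^2\big)\le1,
\]
the last bound coming from $\|F\|_e<1$ on $\U^n$. Keeping only the $n$ terms with $\beta=e_k$, dividing out $r^2$, and letting $r\to1^-$ yields the origin estimate, since $a_{e_k}=\partial H/\partial w_k(0)$ and $b_{e_k}=\partial G/\partial w_k(0)$.

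Finally I would combine the two ingredients: since $1-\|z\|_\infty^2=\min_k(1-|z_k|^2)$, we have $(1-|z_k|^2)^2\ge(1-\|z\|_\infty^2)^2$ for every $k$, so
\[
(1-\|z\|_\infty^2)^2\sum_{j,k}\Big(\big|\tfrac{\partial h_j}{\partial z_k}(z)\big|^2+\big|\tfrac{\partial g_j}{\partial z_k}(z)\big|^2\Big)\le\sum_{j,k}\Big(\big|\tfrac{\partial H_j}{\partial w_k}(0)\big|^2+\big|\tfrac{\partial G_j}{\partial w_k}(0)\big|^2\Big)\le1-\|f(z)\|_e^2,
\]
which is \eqref{qq-ch1.01g} after dividing by $(1-\|z\|_\infty^2)^2$. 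For sharpness at a point with $|z_1|=\cdots=|z_n|=s$ it suffices (the bound being independent of $m$) to take $m=n$ and the holomorphic map $f(w)=\tfrac{1}{\sqrt n}\big(\tfrac{z_1-w_1}{1-\overline{z_1}w_1},\dots,\tfrac{z_n-w_n}{1-\overline{z_n}w_n}\big)$ into $\mathbb{B}^n$: here $f(z)=0$, every off-diagonal and antiholomorphic first derivative vanishes, and $\partial f_k/\partial w_k(z)=-\tfrac{1}{\sqrt n}(1-s^2)^{-1}$, so both sides equal $(1-s^2)^{-2}$. The main obstacle is the origin estimate: the crucial point is that the holomorphic and antiholomorphic parts occupy disjoint Fourier frequencies of $\T^n$, so that no cross terms survive and the squared linear coefficients enter with a favourable sign. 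Evaluating the mean at radius $r<1$ and letting $r\to1^-$ is what lets me bypass any discussion of boundary values or of the individual boundedness of $h$ and $g$, the sole analytic input being $\int_{\T^n}\|F(re^{i\theta})\|_e^2\,d\theta\le(2\pi)^n$; the reduction step and the sharpness construction are then routine.
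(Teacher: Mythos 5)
Your proof is correct, but there is no internal proof to compare it against: this paper never proves Theorem K. The result is imported verbatim from \cite[Theorem 2.2]{CH20} (see the sentence introducing it) and is used only as a black box in the proofs of Theorem \ref{Schwarz-Hilbert} and Corollary \ref{product-balls}. Judged on its own merits, your argument is complete and is the natural one for an $\ell^2$-type Schwarz--Pick estimate. All three ingredients check out: the decomposition $f=h+\overline{g}$ exists because $\U^n$ is simply connected; the chain-rule factors $1-|z_k|^2$ are correct because each component of $\phi_z$ acts in a single variable; and the Parseval step, which is the crux, is handled properly --- on $r\T^n$ the holomorphic part contributes only to frequencies $\beta\geq 0$, the antiholomorphic part only to $\beta\leq 0$, the two supports meet only at $\beta=0$ where the coefficient is exactly $F(0)$, and discarding all terms except $\beta=e_1,\dots,e_n$ before letting $r\to 1^-$ is legitimate since every term is nonnegative. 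The final substitution $1-\|z\|_\infty^2=\min_k(1-|z_k|^2)$ then yields \eqref{qq-ch1.01g}. One small criticism concerns sharpness: your extremal map lives in target dimension $m=n$, and the parenthetical ``the bound being independent of $m$'' does not by itself produce an extremal map for other values of $m$. The cheaper example $f(w)=\left(\frac{z_1-w_1}{1-\overline{z_1}w_1},0,\dots,0\right)$ fixes this: it works for every $m\geq 1$ and every $n$, and gives equality at any $z$ with $|z_1|=\|z\|_\infty$, which in particular covers the diagonal case $|z_1|=\cdots=|z_n|$ claimed in the theorem. Finally, note that your Parseval identity actually proves the stronger statement that the full coefficient sum is bounded by $1-\|F(0)\|_e^2$, of which the theorem keeps only the degree-one part; this recentring-plus-coefficient-estimate mechanism is the same kind of argument as in the cited source \cite{CH20}, so your route should be viewed as a self-contained reconstruction rather than a genuinely different method.
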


In the following, we will generalize and   improve
\eqref{1.7} into the following form.
Moreover, we only need to assume that $f$ is pluriharmonic.

\begin{Pro}
\label{prop}
Let $\Omega$ be a domain in a complex Banach space $X$,
 and $f=h+\overline{g}:\, \Omega \to \C^n$ be a pluriharmonic mapping,
where $h$ and $g$ are holomorphic mappings
of $\Omega$ into the Euclidean space $\C^n$.
Then
$$
|\nabla \| f\|_e(z)|\leq \| Dh(z)\|+\| Dg(z)\|,
\quad z\in \Omega.
$$
\end{Pro}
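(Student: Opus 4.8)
The plan is to reduce the quantity $|\nabla\|f\|_e(z)|$ to the operator norm $\|Df(z)\|$ of the (real) Fréchet derivative by invoking Proposition \ref{Po-1.0}, and then to control $\|Df(z)\|$ through the decomposition $f=h+\overline{g}$. First I would observe that the left-hand side is well-defined at every $z\in\Omega$: when $f(z)=0$ condition (i) in the definition holds, and when $f(z)\neq 0$, the mapping $f$ is real Fréchet differentiable (because $h$ and $g$ are holomorphic, hence differentiable) while the Euclidean norm $\|\cdot\|_e$ is smooth away from the origin, so $\|f(\cdot)\|_e$ is differentiable at $z$ and condition (ii) holds. Thus the statement makes sense for all $z\in\Omega$.

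Next I would apply Proposition \ref{Po-1.0} with $Y=\C^n$ equipped with $\|\cdot\|_e$. In the case $f(z)=0$ it gives exactly $|\nabla\|f\|_e(z)|=\|Df(z)\|$. In the case $f(z)\neq 0$ it gives $|\nabla\|f\|_e(z)|=\sup_{\|\beta\|_X=1}\bigl|l_{f(z)}(Df(z)\beta)\bigr|$ with $l_{f(z)}\in T(f(z))$. Here I would identify the supporting functional explicitly: for the Euclidean space the unique element of $T(u)$ is $l_u(v)=\langle v,u\rangle/\|u\|_e$, so that by the Cauchy–Schwarz inequality $\bigl|l_{f(z)}(Df(z)\beta)\bigr|=|\langle Df(z)\beta,f(z)\rangle|/\|f(z)\|_e\leq\|Df(z)\beta\|_e\leq\|Df(z)\|$ for every $\beta$ with $\|\beta\|_X=1$. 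Consequently, in both cases one obtains the uniform bound $|\nabla\|f\|_e(z)|\leq\|Df(z)\|$.

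Finally I would estimate $\|Df(z)\|$ itself. Since $h$ and $g$ are holomorphic, their derivatives $Dh(z)$ and $Dg(z)$ are complex linear, and the real Fréchet derivative of $f=h+\overline{g}$ is $Df(z)\tau=Dh(z)\tau+\overline{Dg(z)\tau}$. Because complex conjugation is a Euclidean isometry, for each fixed $\tau$ the triangle inequality yields $\|Df(z)\tau\|_e\leq\|Dh(z)\tau\|_e+\|Dg(z)\tau\|_e\leq\bigl(\|Dh(z)\|+\|Dg(z)\|\bigr)\|\tau\|_X$; taking the supremum over $\|\tau\|_X=1$ gives $\|Df(z)\|\leq\|Dh(z)\|+\|Dg(z)\|$, and combining this with the previous step completes the proof. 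The only point requiring care is the correct identification of $T(f(z))$ together with the complex-linear versus real-linear distinction in differentiating $\overline{g}$; once $Df(z)=Dh(z)+\overline{Dg(z)}$ is recorded, the remainder is just Cauchy–Schwarz and the triangle inequality, so I expect no genuine analytic obstacle.
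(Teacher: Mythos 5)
Your proof is correct and takes essentially the same route as the paper's: both pass through the chain $|\nabla \| f\|_e(z)|\leq \| Df(z)\| \leq \| Dh(z)\|+\| Dg(z)\|$, with the first inequality coming from Proposition \ref{Po-1.0} and the second from the decomposition $Df(z)\tau=Dh(z)\tau+\overline{Dg(z)\tau}$. The only difference is one of self-containedness: you verify both inequalities directly (via the norm-one supporting functional and the triangle inequality), whereas the paper cites the second inequality from the literature.
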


\begin{proof}
For a pluriharmonic mapping
$f=h+\overline{g}:\, \Omega \to \C^n$,
we have (see e.g. \cite[p.638]{HK15}),
\[
\| Df(z)\| \leq
\| Dh(z)\|+\| Dg(z)\|,
\quad
z\in \Omega.
\]
Then, by (\ref{nabla-norm-differentiable}), we have
\[
|\nabla \| f\|_e(z)|\leq \| Df(z)\| \leq\| Dh(z)\|+\| Dg(z)\|, \ z\in \Omega.
\]
This completes the proof.
\end{proof}

Let $X$ be the $\ell^\infty$-sum $X= X_1 \oplus \cdots \oplus X_m$ of
JB*-triples $X_1$, \ldots, $X_m$
and
let $\B_X=\B_{X_1}\times \cdots\times \B_{X_m}$ be the unit ball of $X$.
By applying Theorem K, we obtain
the following improvement of
\eqref{1.4j} to
pluriharmonic mappings of the unit ball $\B_X=\B_{X_1}\times \cdots\times \B_{X_m}$
into the Euclidean unit ball $\mathbb{B}^{n}$ of $\C^n$.

\begin{Thm}
\label{Schwarz-Hilbert}
Let $X$ be the $\ell^\infty$-sum $X= X_1 \oplus \cdots \oplus X_m$ of
JB*-triples $X_1$, \ldots, $X_m$
and $\B_X=\B_{X_1}\times \cdots\times \B_{X_m}$ be the unit ball of $X$.
Also, let $f=h+\overline{g}:\, \B_X\to \mathbb{B}^{n}$ be a pluriharmonic mapping,
where
$h$ and $g$ are holomorphic mappings of $\B_X$ into $\C^n$.
Then we have
\begin{equation}
\label{P-Schwarz-Pick-Hilbert}
\sum_{j=1}^m \left(\| Dh(z)\widetilde{w}_j\|_e^2+\| Dg(z)\widetilde{w}_j\|_e^2\right)
\leq
\frac{1-\| f(z)\|^2_e}{(1-\| z\|_X^2)^2}
\end{equation}
for all
$w_j\in X_j$ with $\| w_j\|_{X_j}=1$ $(1\leq j\leq m)$,
where $\widetilde{w}_j=I_j(w_j)$ and $I_j:\,X_j\to X$ is the natural inclusion mapping
for $j=1,\dots, m$.

Moreover, the inequality {\rm (\ref{P-Schwarz-Pick-Hilbert})} is sharp for each $z=(z_1,\dots, z_m)\in \B_X$
with $\|z_1\|_{X_1}=\cdots =\|z_m\|_{X_m}$.
\end{Thm}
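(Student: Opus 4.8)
The plan is to reduce the Banach-space estimate to the polydisc case, Theorem K, by composing $f$ with a suitable holomorphic map from $\U^m$ into $\B_X$ that carries the origin to $z$ and whose coordinate derivatives recover the prescribed directions $\widetilde w_j$. Fix $z=(z_1,\dots,z_m)\in\B_X$ and unit vectors $w_j\in X_j$. Since $Dg_{j,z_j}(0)=B(z_j,z_j)^{1/2}$ is invertible, I set
\[
u_j=\frac{B(z_j,z_j)^{-1/2}w_j}{\|B(z_j,z_j)^{-1/2}w_j\|_{X_j}},
\]
a unit vector in $X_j$, and define $P(\zeta)=(\zeta_1u_1,\dots,\zeta_mu_m)$ together with $\Phi=g_z\circ P$. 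Because $\|\zeta_ku_k\|_{X_k}=|\zeta_k|<1$, the map $P$ sends $\U^m$ into $\B_X$, and since $g_z\in\mathrm{Aut}(\B_X)$ with $g_z(0)=z$, the composition $\Phi:\U^m\to\B_X$ is holomorphic with $\Phi(0)=z$.

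Next I would form $F=f\circ\Phi:\U^m\to\mathbb{B}^{n}$, which is pluriharmonic since pluriharmonicity is preserved under holomorphic precomposition, and write $F=(h\circ\Phi)+\overline{(g\circ\Phi)}$ with $h\circ\Phi$ and $g\circ\Phi$ holomorphic. Because $\Phi$ is holomorphic, the Wirtinger derivatives separate, and at $\zeta=0$ one has
\[
\frac{\partial F}{\partial\zeta_k}(0)=Dh(z)\Big[\tfrac{\partial\Phi}{\partial\zeta_k}(0)\Big],\qquad \frac{\partial F}{\partial\overline\zeta_k}(0)=\overline{Dg(z)\Big[\tfrac{\partial\Phi}{\partial\zeta_k}(0)\Big]}.
\]
Using $\tfrac{\partial\Phi}{\partial\zeta_k}(0)=Dg_z(0)[\widetilde u_k]=B(z,z)^{1/2}[\widetilde u_k]$ and the coordinatewise action of the triple product (so that $B(z,z)^{1/2}$ acts blockwise as $B(z_k,z_k)^{1/2}$ on the $k$-th slot), this equals $\widetilde w_k/\|B(z_k,z_k)^{-1/2}w_k\|_{X_k}$. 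Applying Theorem K to $F$ at the origin then yields
\[
\sum_{k=1}^m\frac{\|Dh(z)\widetilde w_k\|_e^2+\|Dg(z)\widetilde w_k\|_e^2}{\|B(z_k,z_k)^{-1/2}w_k\|_{X_k}^2}\le 1-\|f(z)\|_e^2.
\]

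The quantitative heart of the argument is then the Kaup estimate (\ref{Kaup-estimate}) applied in each factor $X_k$: since $\|B(z_k,z_k)^{-1/2}\|=(1-\|z_k\|_{X_k}^2)^{-1}$ and $\|z_k\|_{X_k}\le\|z\|_X$, I obtain $\|B(z_k,z_k)^{-1/2}w_k\|_{X_k}\le(1-\|z\|_X^2)^{-1}$, hence $\|B(z_k,z_k)^{-1/2}w_k\|_{X_k}^{-2}\ge(1-\|z\|_X^2)^2$. Substituting this lower bound into each summand converts the displayed inequality into the claimed estimate (\ref{P-Schwarz-Pick-Hilbert}). I expect the main obstacle to be precisely this bookkeeping: choosing $u_k$ through $B(z_k,z_k)^{-1/2}$ so that the derivative direction becomes exactly $\widetilde w_k$, and then checking that the normalizing scalar is controlled in the correct direction by Kaup's estimate.

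For sharpness when $\|z_1\|_{X_1}=\cdots=\|z_m\|_{X_m}$, I would run the construction in reverse. Take $w_k=z_k/\|z_k\|_{X_k}$ (or, when $z=0$, any unit vector), choose supporting functionals $\phi_k\in X_k^{\ast}$ with $\|\phi_k\|=1$ and $\phi_k(z_k)=\|z_k\|_{X_k}$, and let $G:\U^m\to\mathbb{B}^{n}$ be an extremal pluriharmonic mapping for Theorem K. Setting $\Psi(x)=(\phi_1(x_1),\dots,\phi_m(x_m))$, which maps $\B_X$ into $\U^m$, the mapping $f=G\circ\Psi$ is pluriharmonic from $\B_X$ into $\mathbb{B}^{n}$. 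The hypothesis that all $\|z_k\|_{X_k}$ coincide guarantees that $\Psi(z)$ has coordinates of equal modulus equal to $\|z\|_X$, so that the extremality of $G$ at $\Psi(z)$ transfers, through the chain rule for the linear map $\Psi$, to equality in (\ref{P-Schwarz-Pick-Hilbert}).
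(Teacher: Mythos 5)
Your argument is correct, and for the inequality itself it is essentially the paper's proof: both routes consist of Theorem K plus the M\"obius transformation $g_z$ plus Kaup's estimate (\ref{Kaup-estimate}). The paper organizes this in two steps --- first the case $z=0$, obtained by applying Theorem K to $\zeta\mapsto f(\zeta_1w_1,\dots,\zeta_mw_m)$, then the general case by feeding $f\circ g_z$ and the single normalized vector $[Dg_z(0)]^{-1}w/\|[Dg_z(0)]^{-1}w\|_X$ into that case, invoking Kaup once in $X$ --- whereas you compose with $g_z\circ P$ in one stroke and normalize componentwise, invoking Kaup in each factor $X_k$ together with $\|z_k\|_{X_k}\le\|z\|_X$; this is the same bookkeeping done per block, and your final step goes in the right direction (lower-bounding each coefficient $\|B(z_k,z_k)^{-1/2}w_k\|_{X_k}^{-2}$ by $(1-\|z\|_X^2)^2$ inside a sum that is bounded above). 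The genuine difference is in the sharpness part: the paper exhibits an explicit extremal map, namely $f=(f_1,0,\dots,0)$ with
\[
f_1(z)=\frac{-\|a_1\|_{X_1}+l_a(z)}{1-\|a_1\|_{X_1}\,l_a(z)},
\]
where $l_a$ is a norm-one support functional at $a$, and checks equality directly; you instead pull back an abstract extremal mapping $G$ of Theorem K through the linear map $\Psi(x)=(\phi_1(x_1),\dots,\phi_m(x_m))$. Your transfer is sound: with $w_k=z_k/\|z_k\|_{X_k}$ one gets $\Psi(\widetilde w_k)=e_k$, so the chain rule identifies the left-hand side of (\ref{P-Schwarz-Pick-Hilbert}) with the left-hand side of Theorem K at $\Psi(z)=(\|z\|_X,\dots,\|z\|_X)$, whose coordinates have equal moduli exactly because of the hypothesis $\|z_1\|_{X_1}=\cdots=\|z_m\|_{X_m}$ --- the point where Theorem K's sharpness condition is consumed. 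Your version uses the sharpness of Theorem K as a black box and needs no explicit formula, while the paper's is self-contained and concrete; both are complete proofs.
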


In particular, for $\B_X=\B^{k_1}\times \cdots \times \B^{k_p}$,
we have the following generalization of (\ref{qq-ch1.01g}) to pluriharmonic mappings of $\B_X$ into $\mathbb{B}^{n}$,
where $ \B^{k_j}$ are the Euclidean unit balls in $\mathbb{C}^{k_j}$ for $j\in\{1,\ldots,p\}$.

\begin{Cor}
\label{product-balls}
Let $\B_X=\B^{k_1}\times \cdots \times \B^{k_p}$.
Also, let $f=h+\overline{g}:\, \B_X\to \mathbb{B}^{n}$ be a pluriharmonic mapping,
where $h$ and $g$ are holomorphic mappings of $\B_X$ into $\C^n$.
Then
\begin{equation}
\label{product-ball-Schwarz-Pick}
\sum_{j=1}^{n}\sum_{k=1}^{m}\left(\left|\frac{\partial
f_{j}(z)}{\partial z_{k}}\right|^{2}+ \left|\frac{\partial
f_{j}(z)}{\partial\overline{z}_{k}}\right|^{2}\right) \leq
\frac{\kappa(1-\| f(z)\|^2_e)}{(1-\| z\|_X^2)^2},
\end{equation}
where $m=k_1+\cdots+k_p$ and $\kappa=\max\{ k_1, \cdots, k_p\}$.
\end{Cor}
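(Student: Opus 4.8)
The plan is to derive \eqref{product-ball-Schwarz-Pick} directly from Theorem \ref{Schwarz-Hilbert}, applied to the $\ell^\infty$-sum $X = \C^{k_1}\oplus\cdots\oplus\C^{k_p}$ in which each factor $X_j = \C^{k_j}$ is a finite-dimensional Hilbert space (hence a JB$^*$-triple) whose open unit ball is the Euclidean ball $\B^{k_j}$. The total complex dimension is $m = k_1 + \cdots + k_p$, and $\|z\|_X = \max_{1\le j\le p}\|z_j\|_e$ for $z = (z_1,\dots,z_p)$.

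First I would rewrite the left-hand side of \eqref{product-ball-Schwarz-Pick} in the operator language of Theorem \ref{Schwarz-Hilbert}. Since $f = h + \overline g$ with $h,g$ holomorphic, the Cauchy--Riemann relations give $\partial f_j/\partial z_k = \partial h_j/\partial z_k$ and $\partial f_j/\partial\overline z_k = \overline{\partial g_j/\partial z_k}$, so for each source coordinate $k$,
\[
\sum_{j=1}^{n}\left(\left|\frac{\partial f_j(z)}{\partial z_k}\right|^2 + \left|\frac{\partial f_j(z)}{\partial\overline z_k}\right|^2\right) = \|Dh(z)e_k\|_e^2 + \|Dg(z)e_k\|_e^2,
\]
where $e_k$ is the $k$-th standard basis vector of $\C^m$. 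Grouping the coordinates $k=1,\dots,m$ into the $p$ consecutive blocks of sizes $k_1,\dots,k_p$, the vector $e_k$ lying in block $j$ is exactly the inclusion $\widetilde{w}$ of a standard basis vector $w$ of $\C^{k_j}$, a unit vector of $X_j$. Thus the full left-hand side becomes $\sum_{j=1}^{p}\sum_{i=1}^{k_j} A_{j,i}$, where $A_{j,i} = \|Dh(z)\widetilde{e_i^{(j)}}\|_e^2 + \|Dg(z)\widetilde{e_i^{(j)}}\|_e^2 \ge 0$ and $e_i^{(j)}$ is the $i$-th standard basis vector of $\C^{k_j}$.

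The heart of the matter is then a short summation step. For each $t=1,\dots,\kappa$ I would select in each block $j$ the unit vector $w_j^{(t)} = e_t^{(j)}$ when $t\le k_j$ and any fixed unit vector (say $e_1^{(j)}$) when $t > k_j$. Applying Theorem \ref{Schwarz-Hilbert} to the system $(w_1^{(t)},\dots,w_p^{(t)})$ yields, for each $t$,
\[
\sum_{j=1}^{p}\left(\|Dh(z)\widetilde{w_j^{(t)}}\|_e^2 + \|Dg(z)\widetilde{w_j^{(t)}}\|_e^2\right) \le \frac{1-\|f(z)\|_e^2}{(1-\|z\|_X^2)^2}.
\]
Summing these $\kappa$ inequalities and interchanging the order of summation, the inner sum over $t$ for a fixed block $j$ equals $\sum_{i=1}^{k_j} A_{j,i} + (\kappa-k_j)A_{j,1}$, which dominates $\sum_{i=1}^{k_j} A_{j,i}$ because every $A_{j,i}$ is nonnegative. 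Hence $\sum_{j=1}^{p}\sum_{i=1}^{k_j} A_{j,i} \le \kappa\,\frac{1-\|f(z)\|_e^2}{(1-\|z\|_X^2)^2}$, which is exactly \eqref{product-ball-Schwarz-Pick}.

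The only delicate point, and the step I would treat most carefully, is the bookkeeping among the ambient dimension $m$, the number of factors $p$, and the constant $\kappa=\max_j k_j$: one must pad the families of unit vectors so that exactly $\kappa$ applications of the theorem sweep out every basis direction of every block, and this is precisely where $\kappa$, rather than $m$, enters. Nonnegativity of the squared-norm terms is what renders the padding harmless, so no cancellations occur and the estimate drops out with no further computation.
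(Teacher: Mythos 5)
Your proof is correct and takes essentially the same approach as the paper: both derive the corollary by applying Theorem \ref{Schwarz-Hilbert} exactly $\kappa$ times to families of block basis unit vectors and summing, with nonnegativity of the squared-norm terms rendering the over-counting harmless. The only cosmetic difference is that the paper first reduces to the equal-dimension case $k_1=\cdots=k_p=\kappa$ by padding the domain through the inclusions $\B^{k_j}\hookrightarrow \B^{\kappa}$, whereas you leave the domain untouched and instead pad the families of unit vectors by repeating one vector in each short block.
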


\begin{proof}
By using the natural inclusion map $\zeta_j\in
\B^{k_j}\to (\zeta_j,0)\in \mathbb{B}^{\kappa}$,
we may assume that $\kappa=k_1=\cdots=k_p$. Let $e_1, \dots,
e_\kappa$ be the usual orthonormal basis of $\C^\kappa$ and let
$\widetilde{e}_{j,l}=I_j(e_l)$, where $I_j:\,X_j \to X$ is the natural
inclusion mapping. Then we have $\| e_{\nu}\|_{X_j}=1$ for each $\nu$
with $1\leq \nu\leq \kappa$ and each $j$ with $1\leq j\leq p$. By
\eqref{P-Schwarz-Pick-Hilbert}, we have
\[
\sum_{j=1}^p \left(\| Dh(z)\widetilde{e}_{j,l}\|_e^2+\|
Dg(z)\widetilde{e}_{j,l}\|_e^2\right) \leq \frac{1-\| f(z)\|^2_e}{(1-\|
z\|_X^2)^2}, \quad 1\leq l\leq \kappa,
\]
which gives that
\begin{eqnarray*}
\sum_{j=1}^{n}\sum_{k=1}^{m}\left(\left|\frac{\partial
f_{j}(z)}{\partial z_{k}}\right|^{2}+ \left|\frac{\partial
f_{j}(z)}{\partial\overline{z}_{k}}\right|^{2}\right) &=&
\sum_{l=1}^\kappa \sum_{j=1}^p \left(\|
Dh(z)\widetilde{e}_{j,l}\|_e^2+\| Dg(z)\widetilde{e}_{j,l}\|_e^2\right)
\\
&\leq & \frac{\kappa(1-\| f(z)\|^2_e)}{(1-\| z\|_X^2)^2}.
\end{eqnarray*}
This completes the proof.
\end{proof}

The following proposition is a generalization of
\eqref{1.8} to
pluriharmonic mappings of a domain $\Omega$ in the Euclidean space $\C^m$ into the Euclidean space $\C^n$.
Note that we only need to assume that $f$ is pluriharmonic.

\begin{Pro}
\label{prop1.8}
Let $\Omega$ be a domain in the Euclidean space $\C^m$.
Let $f=h+\overline{g}:\, \Omega\to \C^n$ be a pluriharmonic mapping,
where $h$ and $g$ are holomorphic mappings from $\Omega$
to the Euclidean space $\C^n$.
Then,  we have
\begin{equation}
\label{nabla-operator_norm}
\| \nabla f(z)\|_F
\leq
\sqrt{2m(\|Dh(z)\|^2+\|Dg(z)\|^2)}.
\end{equation}
\end{Pro}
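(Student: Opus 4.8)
The plan is to reduce the Frobenius-norm estimate to the operator-norm bounds on $Dh$ and $Dg$ by a direct computation of $\nabla f$ in terms of the holomorphic and antiholomorphic pieces. Since $f=h+\overline{g}$ with $h,g$ holomorphic on $\Omega\subset\C^m$, the real partial derivatives split cleanly. Writing $z_k=x_k+\ii y_k$, the Wirtinger calculus gives, for each coordinate function $f_j=h_j+\overline{g_j}$,
\[
\frac{\partial f_j}{\partial x_k}=\frac{\partial h_j}{\partial z_k}+\overline{\left(\frac{\partial g_j}{\partial z_k}\right)},
\qquad
\frac{\partial f_j}{\partial y_k}=\ii\frac{\partial h_j}{\partial z_k}-\ii\,\overline{\left(\frac{\partial g_j}{\partial z_k}\right)},
\]
using that the $z_k$-derivatives of $h$ and the $\overline{z}_k$-derivatives of $\overline{g}$ are the only surviving terms. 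First I would record these two identities, which are the workhorse of the argument.

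The key step is then to expand the squared Frobenius norm. By definition,
\[
\|\nabla f(z)\|_F^2=\sum_{k=1}^m\left(\left\|\frac{\partial f}{\partial x_k}(z)\right\|_e^2+\left\|\frac{\partial f}{\partial y_k}(z)\right\|_e^2\right).
\]
Inserting the two identities above and using the parallelogram-type cancellation $\|a+b\|_e^2+\|\ii a-\ii b\|_e^2=2\|a\|_e^2+2\|b\|_e^2$ applied with $a=Dh(z)e_k$ and $b=\overline{Dg(z)e_k}$ (where $e_k$ is the $k$-th standard basis vector of $\C^m$, so that $\partial h/\partial z_k=Dh(z)e_k$), the cross terms drop out and we obtain
\[
\|\nabla f(z)\|_F^2=2\sum_{k=1}^m\left(\|Dh(z)e_k\|_e^2+\|Dg(z)e_k\|_e^2\right).
\]
Finally I would bound each summand by the operator norm: $\|Dh(z)e_k\|_e\leq\|Dh(z)\|$ and likewise for $g$, since $\|e_k\|_e=1$. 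Summing over the $m$ values of $k$ yields
\[
\|\nabla f(z)\|_F^2\leq 2m\left(\|Dh(z)\|^2+\|Dg(z)\|^2\right),
\]
and taking square roots gives \eqref{nabla-operator_norm}.

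The only delicate point is the cancellation of the cross terms in the parallelogram identity; I expect this to be the main (though still routine) obstacle, since one must be careful that the mixed inner-product contributions from $\|\partial f/\partial x_k\|_e^2$ and from $\|\partial f/\partial y_k\|_e^2$ carry opposite signs and therefore annihilate. Once that algebra is verified the rest is immediate, and no holomorphy beyond the splitting of the Wirtinger derivatives is used — which is why the hypothesis can be relaxed to mere pluriharmonicity, exactly as the statement asserts.
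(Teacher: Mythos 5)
Your proof is correct and takes essentially the same route as the paper's: your parallelogram-identity computation establishes precisely the paper's key relation $\|\nabla f(z)\|_F^2 = 2\left(\|Dh(z)\|_F^2 + \|Dg(z)\|_F^2\right)$, and your column-by-column bound $\|Dh(z)e_k\|_e \le \|Dh(z)\|$ summed over $k$ is exactly the paper's inequality $\|A\|_F^2 \le m\|A\|^2$ applied to $Dh$ and $Dg$. The only difference is that you spell out the elementary derivations of these two facts, which the paper invokes without proof.
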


\begin{proof}
 The inequality \eqref{nabla-operator_norm} follows from the relation
\begin{equation}
\label{Frobenius2}
\| \nabla f(z)\|_F^2=2(\| Dh(z)\|_F^2+\| Dg(z)\|_F^2)
\end{equation}
and  the inequality \eqref{A-Frobenius-Operator}. This completes the proof.
\end{proof}

The following corollary is a generalization of
\eqref{1.5j} to
pluriharmonic mappings from $\B_X=\B^{k_1}\times \cdots \times \B^{k_p}$ to $\mathbb{B}^{n}$.
Note that \eqref{Frobenius-Schwarz-Pick-Hilbert}
an improvement of \eqref{1.5j}.
In particular, the following result holds for
$\B_X=\B^{m}$ or for $\B_X=\U^m$.

\begin{Cor}
\label{Frobenius norm}

Assume the hypotheses of Corollary \ref{product-balls}. Then
\begin{equation}
\label{Frobenius-Schwarz-Pick-Hilbert} \| \nabla f(z)\|_F^2 \leq
\frac{2\kappa(1-\| f(z)\|^2_e)}{(1-\| z\|_X^2)^2},
\end{equation}
where $\kappa=\max\{ k_1, \cdots, k_p\}$.
\end{Cor}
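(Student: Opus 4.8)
The plan is to reduce the statement directly to Corollary \ref{product-balls} by identifying the Frobenius norm of the real gradient $\nabla f(z)$ with twice the Wirtinger-derivative sum appearing on the left-hand side of \eqref{product-ball-Schwarz-Pick}.

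First I would record the pointwise identity
\begin{equation*}
\| \nabla f(z)\|_F^2 = 2\sum_{j=1}^{n}\sum_{k=1}^{m}\left(\left|\frac{\partial f_{j}(z)}{\partial z_{k}}\right|^{2}+ \left|\frac{\partial f_{j}(z)}{\partial\overline{z}_{k}}\right|^{2}\right),
\end{equation*}
where $m=k_1+\cdots+k_p$. This follows from the Wirtinger relations $\partial f_j/\partial x_k = \partial f_j/\partial z_k + \partial f_j/\partial \overline{z}_k$ and $\partial f_j/\partial y_k = i(\partial f_j/\partial z_k - \partial f_j/\partial \overline{z}_k)$, combined with the elementary polarization $|a+b|^2+|a-b|^2 = 2(|a|^2+|b|^2)$ applied with $a = \partial f_j/\partial z_k$ and $b = \partial f_j/\partial \overline{z}_k$; summing over $j$ and $k$ and invoking the definition of $\nabla f(z)$ produces the displayed identity. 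Equivalently, this is precisely relation \eqref{Frobenius2} from the proof of Proposition \ref{prop1.8} together with the observation that, since $g$ is holomorphic, $\partial f_j/\partial z_k = \partial h_j/\partial z_k$ and $\partial f_j/\partial \overline{z}_k = \overline{\partial g_j/\partial z_k}$, so that $\|Dh(z)\|_F^2 + \|Dg(z)\|_F^2$ coincides with the Wirtinger sum.

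Granting this identity, the rest is immediate: I would apply the estimate \eqref{product-ball-Schwarz-Pick} of Corollary \ref{product-balls} to bound the double sum by $\kappa(1-\|f(z)\|_e^2)/(1-\|z\|_X^2)^2$ and then multiply through by the factor $2$, obtaining
\begin{equation*}
\| \nabla f(z)\|_F^2 \leq \frac{2\kappa(1-\| f(z)\|^2_e)}{(1-\| z\|_X^2)^2},
\end{equation*}
which is exactly \eqref{Frobenius-Schwarz-Pick-Hilbert}.

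Since the whole argument is a one-step combination of an algebraic identity with an already-established inequality, there is no genuine obstacle here; the only point requiring care is the factor-of-two bookkeeping between the Frobenius norm of the real gradient $\nabla f(z)$ and the sum of squared Wirtinger derivatives, which I would verify explicitly as above rather than leave implicit. The announced special cases $\B_X=\B^{m}$ and $\B_X=\U^{m}$ are then simply the choices $p=1,\ k_1=m$ and $k_1=\cdots=k_p=1$, and require no additional work.
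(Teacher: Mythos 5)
Your proposal is correct and follows exactly the paper's own proof: the paper likewise combines the estimate \eqref{product-ball-Schwarz-Pick} of Corollary \ref{product-balls} with the identity \eqref{Frobenius2}, $\| \nabla f(z)\|_F^2=2(\| Dh(z)\|_F^2+\| Dg(z)\|_F^2)$, which is the same factor-of-two bookkeeping you verify via the Wirtinger relations. The only difference is that you spell out the verification of that identity explicitly, which the paper leaves implicit.
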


\begin{proof}
 The inequality (\ref{product-ball-Schwarz-Pick}) and  the relation \eqref{Frobenius2}
imply \eqref{Frobenius-Schwarz-Pick-Hilbert}.
This completes the proof.
\end{proof}

The following theorem is a generalization  of
\eqref{1.6} to
pluriharmonic mappings from the unit ball $\B_X$ of a finite dimensional JB$^*$-triple $X$
to the Euclidean unit ball $\mathbb{B}^{n}$ of $\C^n$,
where $n=\dim X$.
Note that the condition
$\| \omega_f\|<1$ in $\B^{n}$
implies that
$f$ is a sense-preserving and locally univalent mapping in $\mathbb{B}^{n}$
(see \cite[p.637]{HK15}).
Also, the condition \eqref{P-Schwarz-Pick-quasiregular-Hilbert} is also an improvement of \eqref{1.6}.

\begin{Thm}
\label{Schwarz-quasiregular-Hilbert} Let $\B_X$ be a bounded
symmetric domain realized as the unit ball of a JB$^*$-triple $X$
with $\dim X=n<\infty$. Also, let $f=h+\overline{g}:\, \B_X\to
\mathbb{B}^{n}$ be a pluriharmonic mapping, where $h$ and
$g$ are holomorphic mappings of $\B_X$ into $\C^n$. Assume that $h$
is locally biholomorphic in $\B_X$ and $\| \omega_f\|\leq
k<1$ holds in $\B_X$. Then, for all $z\in\B_X$,
\begin{equation}
\label{P-Schwarz-Pick-quasiregular-Hilbert}
\| Dh(z)\|+\| Dg(z)\|
\leq
\frac{2K}{\sqrt{2(K^2+1)}}\frac{\sqrt{1-\| f(z)\|^2_e}}{1-\| z\|_X^2},
\end{equation}
where $K=(1+k)/(1-k)$.
\end{Thm}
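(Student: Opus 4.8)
The plan is to reduce the statement to a two‑variable extremal problem: first extract a directional Schwarz--Pick estimate for the pluriharmonic slice derivatives, then feed in the quasiregularity constraint, and finally optimise. Since $\dim X=n$ and $h$ is locally biholomorphic, $Dh(z)$ is invertible and $Dg(z)=\omega_f(z)Dh(z)$; taking operator norms and using $\|\omega_f(z)\|\le k$ gives at once
$\|Dg(z)\|\le k\,\|Dh(z)\|$. For the size of $Dh$ itself I would apply Theorem \ref{Schwarz-Hilbert} in the degenerate case $m=1$, i.e. to the single JB$^*$-triple $X=X_1$, which already carries the Schwarz--Pick denominator $(1-\|z\|_X^2)^2$. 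This yields, for every unit vector $w\in X$, the directional bound $\|Dh(z)w\|_e^2+\|Dg(z)w\|_e^2\le R^2$, where $R=\sqrt{1-\|f(z)\|_e^2}\,/(1-\|z\|_X^2)$.

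The crucial analytic step is to promote this to the operator-norm inequality $\|Dh(z)\|^2+\|Dg(z)\|^2\le R^2$. Here one must exploit the quasiregular relation $Dg(z)=\omega_f(z)Dh(z)$ to align the extremal directions of $Dh(z)$ and $Dg(z)$: if $u$ is a unit vector realising $\|Dh(z)u\|_e=\|Dh(z)\|$, the intended argument is that the coupling through $\omega_f(z)$ lets one evaluate the directional estimate so that the two squared operator norms appear simultaneously on the left-hand side, producing $\|Dh(z)\|^2+\|Dg(z)\|^2\le R^2$.

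Granting this, write $a=\|Dh(z)\|$ and $b=\|Dg(z)\|$, so that $0\le b\le ka$ and $a^2+b^2\le R^2$. It remains to maximise $a+b$ over this planar region. Parametrising the bounding circle as $a=R\cos\theta$, $b=R\sin\theta$, the constraint $b\le ka$ reads $\theta\le\arctan k<\pi/4$, and on $[0,\arctan k]$ the quantity $a+b=R(\cos\theta+\sin\theta)$ is strictly increasing; hence its maximum is attained at the corner $\theta=\arctan k$, where $a=R/\sqrt{1+k^2}$ and $b=kR/\sqrt{1+k^2}$. This gives
$\|Dh(z)\|+\|Dg(z)\|\le \tfrac{1+k}{\sqrt{1+k^2}}\,R$, and since $K=(1+k)/(1-k)$ one checks the identity $\tfrac{1+k}{\sqrt{1+k^2}}=\tfrac{2K}{\sqrt{2(K^2+1)}}$, which is exactly the asserted constant.

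I expect the main obstacle to be precisely the operator-norm step $\|Dh(z)\|^2+\|Dg(z)\|^2\le R^2$. Theorem \ref{Schwarz-Hilbert} only furnishes the estimate $\|Dh(z)w\|_e^2+\|Dg(z)w\|_e^2\le R^2$ along a \emph{single common} direction $w$, whereas the largest singular direction of $Dh(z)$ and that of $Dg(z)=\omega_f(z)Dh(z)$ need not coincide; passing from this directional bound to the sum of squared operator norms is the delicate point, and it is exactly where the quasiregular structure $Dg=\omega_f Dh$ with $\|\omega_f\|\le k$ must be used in full. If the extremal directions cannot be aligned, the single-direction bound together with $\|Dg(z)\|\le k\|Dh(z)\|$ yields only the weaker constant $2K/(K+1)=1+k$, so the improvement to $2K/\sqrt{2(K^2+1)}$ hinges entirely on resolving this alignment.
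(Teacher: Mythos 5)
You follow the same route as the paper's own proof: the directional Schwarz--Pick estimate of Theorem \ref{Schwarz-Hilbert} with $m=1$, the pointwise relation $\|Dg(z)\|\leq k\|Dh(z)\|$ coming from $Dg=\omega_f Dh$, and a planar optimization. Your optimization at the corner and the identity $\frac{1+k}{\sqrt{1+k^2}}=\frac{2K}{\sqrt{2(K^2+1)}}$ are both correct. However, your proof is incomplete at precisely the point you single out: the promotion from the directional bound $\|Dh(z)w\|_e^2+\|Dg(z)w\|_e^2\leq R^2$ to the operator-norm bound $\|Dh(z)\|^2+\|Dg(z)\|^2\leq R^2$ is only ``granted'', never proved; without it your argument yields only the weaker constant $2K/(K+1)=1+k$, exactly as you note.

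Your suspicion that this is the crux is more than justified: the promotion is false, and so is the inequality \eqref{P-Schwarz-Pick-quasiregular-Hilbert} itself once $k^2>1/3$. The paper's proof commits exactly this leap --- after deriving $\|Dh(z)w\|_e+\|Dg(z)w\|_e\leq\frac{1+\eta_w(z)}{\sqrt{1+\eta_w^2(z)}}\,R$ for each fixed unit vector $w$, it replaces the directional norms by operator norms, which is illegitimate because $\|Dh(z)\|+\|Dg(z)\|=\sup_u\|Dh(z)u\|_e+\sup_v\|Dg(z)v\|_e$ can strictly exceed $\sup_w\big(\|Dh(z)w\|_e+\|Dg(z)w\|_e\big)$: the two suprema need not be attained at a common direction. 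A counterexample to both the step and the statement: let $X=\C^2$ with the Euclidean norm (a JB$^*$-triple, so $\B_X=\mathbb{B}^2$, $n=2$) and
\[
f(z_1,z_2)=\left(z_1,\ \frac{z_2+k\overline{z}_2}{1+k}\right)=h(z)+\overline{g(z)},
\qquad
h(z)=\left(z_1,\tfrac{z_2}{1+k}\right),\quad g(z)=\left(0,\tfrac{kz_2}{1+k}\right).
\]
Since $|z_2+k\overline{z}_2|\leq(1+k)|z_2|$, $f$ maps $\mathbb{B}^2$ into $\mathbb{B}^2$; moreover $h$ is biholomorphic, $\omega_f\equiv\mathrm{diag}(0,k)$ so $\|\omega_f\|=k$, and $f(0)=0$, so $R=1$ at $z=0$. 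But $Dh(0)=\mathrm{diag}\left(1,\tfrac{1}{1+k}\right)$ and $Dg(0)=\mathrm{diag}\left(0,\tfrac{k}{1+k}\right)$ attain their norms in orthogonal directions, giving $\|Dh(0)\|^2+\|Dg(0)\|^2=1+\tfrac{k^2}{(1+k)^2}>R^2$ and
\[
\|Dh(0)\|+\|Dg(0)\|=\frac{1+2k}{1+k}>\frac{1+k}{\sqrt{1+k^2}}=\frac{2K}{\sqrt{2(K^2+1)}}
\quad\text{whenever } k^2>\tfrac{1}{3},
\]
while the directional estimate of Theorem \ref{Schwarz-Hilbert} and the weaker bound $(1+k)R$ remain satisfied. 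So the gap in your write-up is genuine and cannot be filled: the constant actually reachable by this method (and, on the evidence of this example, the correct one) is $2K/(K+1)$, not $2K/\sqrt{2(K^2+1)}$.
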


The proofs of Theorems \ref{Har-1}$\sim$\ref{Schwarz-quasiregular-Hilbert} will be given in part II of Section \ref{sec-4}.

\section{Proofs of the main results}\label{sec-4}
\subsection*{Part I} Schwarz  type lemmas of holomorphic mappings and their applications.
\subsection*{ The proof of Theorem \ref{thm-h1}} Let $z$ be any fixed point in $B_{X}\backslash\{0\}$. Without
loss of generality, we assume  that $f(z)\neq0$. Let
$\eta=z/\|z\|_{X}\in\partial B_{X}$. For any fixed $b\in
\partial B_{Y}$, let

$$F(\xi)=l_{b}(f(\xi\eta)),~\xi\in\U,$$
where $l_{b}\in T(b)$. Then $F$ is a holomorphic mapping of $\U$
into $\overline{\U}$. By Remark \ref{cor-Kr-1}, we have

\be\label{eq-u1}|F(\xi)|\leq\frac{|F(0)|+|\xi|}{1+|F(0)|\,|\xi|}\ee
for $\xi\in\U$. Elementary calculation leads to

\be\label{eq-u2}|F(0)|=|l_{b}(f(0))|\leq\|l_{b}\|_{Y^{\ast}}\|f(0)\|_{Y}=\|f(0)\|_{Y}.\ee

Since, for any fixed $a\in[0,1)$, the function
$\varrho(x)=(a+x)/(1+ax)$ is increasing with respect to the variable
$x\in[0,\infty)$, by (\ref{eq-u1}) and (\ref{eq-u2}), we see that

\be\label{eq-u3}|l_{b}(f(\xi\eta))|\leq\frac{\|f(0)\|_{Y}+|\xi|}{1+\|f(0)\|_{Y}|\xi|}.\ee
Finally, by letting $\xi=\|z\|_{X}$ and $b=f(z)/\|f(z)\|_{Y}$ in
(\ref{eq-u3}), we obtain

$$\|f(z)\|_{Y}\leq\frac{\|f(0)\|_{Y}+\|z\|_{X}}{1+\|f(0)\|_{Y}\|z\|_{X}}.$$

Next, we show the sharpness part. If there is a point $z_{0}\in B_X$ such that $\|f(z_{0})\|_{Y}=1$, then $\|f(z)\|_{Y}=1$ for all $z\in B_X$. In this case, the sharpness part is obvious. In the following, we assume that $\|f(z)\|_{Y}<1$  for all $z\in B_X$. For any fixed point $z_{0}\in B_{X}\backslash\{0\}$, let
 $l_{w_{0}}\in T(w_{0})$ be fixed, where
 $w_{0}=z_{0}/\|z_{0}\|_{X}$. For each fixed  $b\in\partial B_{Y}$ and any fixed $a\in[0,1)$, let
 $$f(z)=\frac{a+l_{w_{0}}(z)}{1+al_{w_{0}}(z)}b,~z\in B_{X}.$$
Then $f$ is a holomorphic mapping from $B_X$ into $B_Y$ and by taking $z=z_{0}$, we obtain
$$\|f(z_{0})\|_{Y}=\left|\frac{a+l_{w_{0}}(z_{0})}{1+al_{w_{0}}(z_{0})}\right|
=\frac{\| f(0)\|_Y+\|z_{0}\|_{X}}{1+\| f(0)\|_Y\|z_{0}\|_{X}},$$ which completes the
proof.
 \qed

\subsection*{ The proof of Theorem \ref{thm-h2}} Let $z$ be any fixed point in $B_{X}\backslash\{0\}$. Without
loss of generality, we assume  that $f(z)\neq0$. Let
$\eta=z/\|z\|_{X}\in\partial B_{X}$. For any fixed $b\in
\partial B_{Y}$, let

$$F(\xi)=l_{b}(f(\xi\eta)),~\xi\in\U,$$
where $l_{b}\in T(b)$. Then $F$ is a holomorphic mapping of $\U$
into itself with $F(0)=0$. By Theorem B, we have

\be\label{eq-u-1}
|F(\xi)|\leq\frac{|F'(0)|+|\xi|}{1+|F'(0)|\,|\xi|}|\xi| \quad \mbox{for $\xi\in\U$.}
\ee
 Elementary calculation leads to

\beqq
|F'(0)|=|l_{b}(Df(0)\eta)|\leq\|l_{b}\|_{Y^{\ast}}\|Df(0)\eta\|_{Y}\leq\|Df(0)\|,\eeqq
 which together with (\ref{eq-u-1}) yields that

\be\label{eq-u-3}|l_{b}(f(\xi\eta))|\leq\frac{\|Df(0)\|+|\xi|}{1+\|Df(0)\|\,|\xi|}|\xi|.\ee
Finally, by letting $\xi=\|z\|_{X}$ and $b=f(z)/\|f(z)\|_{Y}$ in
(\ref{eq-u-3}), we obtain

$$\|f(z)\|_{Y}\leq\frac{\|Df(0)\|+\|z\|_{X}}{1+\|Df(0)\|\|z\|_{X}}\|z\|_{X}.$$

Next, we show the sharpness part. For any fixed point $z_{0}\in
B_{X}\backslash\{0\}$, let
 $l_{w_{0}}\in T(w_{0})$ be fixed, where
 $w_{0}=z_{0}/\|z_{0}\|_{X}$. For any fixed  $b\in\partial B_{Y}$ and any fixed $a\in[0,1]$, let
 $$f(z)=l_{w_{0}}(z)\frac{a+l_{w_{0}}(z)}{1+al_{w_{0}}(z)}b,~z\in B_{X}.$$
Then $f$ is a holomorphic mapping from $B_X$ into $B_Y$ with $f(0)=0$
and by letting $z=z_{0}$, we have
$$\|f(z_{0})\|_{Y}=|l_{w_{0}}(z_{0})|\left|\frac{a+l_{w_{0}}(z_{0})}{1+al_{w_{0}}(z_{0})}\right|
=\|z_{0}\|_{X}\frac{\| Df(0)\|+\|z_{0}\|_{X}}{1+\| Df(0)\|\|z_{0}\|_{X}},$$ which
completes the proof.
 \qed

\subsection*{ The proof of Theorem \ref{thm-Bloch}}
 We only need to prove (\ref{eq-thm-3.0}) for $k\in(0,1)$, since $\mathscr{B}_{f}/\mathscr{B}_{h}\equiv1$ for $k=0$.
In the following, we assume that $k\in(0,1)$ and we divide the proof
into five steps.

$\mathbf{Step~ 1.}$ For
$f=h+\overline{g}\in\mathscr{PH}(k)$, we claim that $\mathscr{B}_{f}(0)=d(0,\partial f(\mathbf{B}))$ and $\mathscr{B}_{h}(0)=d(0,\partial h(\mathbf{B}))$,
where $d(0,\partial f(\mathbf{B}))$ and $d(0,\partial h(\mathbf{B}))$ denote the Euclidean distances from $0$ to $\partial f(\mathbf{B})$
and $\partial h(\mathbf{B})$, respectively. We only need to prove $\mathscr{B}_{f}(0)=d(0,\partial f(\mathbf{B}))$ because the proof of another one is similar. From the definition of $\mathscr{B}_{f}(0)$,
we see that $\mathscr{B}_{f}(0)$ is equal either to the Euclidean distance from $f(0)$ to a boundary
point of $f(\mathbf{B})$ or to the Euclidean distance from $f(0)$ to a critical value of $f$.
In the following, we will show that the critical value of $f$ does not exist.  Since $\|\omega_{f}\|\leq k$, we see that
$$\det J_{f}=|\det
Dh|^{2}\det\left(I-Dg[Dh]^{-1}\overline{Dg[Dh]^{-1}}\right)\neq0.$$ Consequently, $f$ is locally univalent in $\mathbf{B}$. Let $V$ be a subdomain of $\mathbf{B}$ such that $f(V)=\{w:\, \|w\|_{e}<\mathscr{B}_{f}(0)\}$. If there exist $z_{0}\in\partial V$ and $z_{1}\in V$ such that $f(z_{0})=f(z_{1})$, then, by the condition that $\det J_{f}\neq0$ in $\mathbf{B}$, there exist neighbourhoods $U_{0}(z_{0})$ of $z_{0}$, $U_{1}(z_{1})$ of $z_{1}$, and $U_{2}(f(z_{0}))$ of $f(z_{0})=f(z_{1})$ such that $f$ maps $U_{0}(z_{0})$ and $U_{1}(z_{1})$ onto $U_{2}(f(z_{0}))$ univalently, respectively. This contradicts the fact that $f$ is univalent in $V$.
Therefore, the critical points do not exist.
Hence $\mathscr{B}_{f}(0)=d(0,\partial f(\mathbf{B}))$.
 Without loss of generality,
we assume that there exists
a boundary point $\xi_{0}$ of $f(\mathbf{B})$ such that
$\xi_{0}\in\{w\in\mathbb{C}^{n}:\, \|w\|_{e}=\mathscr{B}_{f}(0)\}$. Let
$\ell_{\xi_{0}}=f^{-1}([0,\xi_{0}))$ be the preimage of the
semi-open segment $[0,\xi_{0})$ with the starting point $0$ in the
ball $\mathbf{B}$. Then
\begin{equation}
\label{eq-bf0}
\mathscr{B}_{f}(0)=\|\xi_{0}\|_{e}=\int_{\ell_{\xi_{0}}}\left\|df(z)\right\|_{e}=\inf_{\gamma}\int_{\gamma}\left\|df(z)\right\|_{e},
\end{equation}
where the minimum is taken over all smooth paths $[0,1)\ni
t\mapsto\gamma(t)\in\mathbf{B}$ with $\gamma(0)=0$ and
$\lim_{t\mapsto 1^{-}}\|\gamma(t)\|_{X}=1$. Similarly,
we  can also assume that
$$\mathscr{B}_{h}(0)=\|\xi_{1}\|_{e}=\int_{\ell_{\xi_{1}}}\left\|dh(z)\right\|_{e}=\inf_{\gamma}\int_{\gamma}\left\|dh(z)\right\|_{e},
$$
where $\xi_{1}$ is a boundary point of $h(\mathbf{B})$ such that
$\xi_{1}\in\{w\in\mathbb{C}^{n}:\, \|w\|_{e}=\mathscr{B}_{h}(0)\}$
and
the simple smooth curve $\ell_{\xi_{1}}=h^{-1}([0,\xi_{1}))$
is the preimage of the semi-open segment $[0,\xi_{1})$ under the
mapping $h$. For $t\in[0,1)$, let
$\ell_{\xi_{0}}:=\ell_{\xi_{0}}(t)=f^{-1}(\xi_{0}t)$ and
$\ell_{\xi_{1}}:=\ell_{\xi_{1}}(t)=h^{-1}(\xi_{1}t)$.

$\mathbf{Step~ 2.}$ We first  establish the lower bound of
$\mathscr{B}_{f}(0)/\mathscr{B}_{h}(0).$
Differentiation of the
equation $f^{-1}(f(z))=z$ yields the following two formulas:
$$Df^{-1}Dh+\overline{D}f^{-1}Dg=I$$
and
$$Df^{-1}\overline{Dg}+\overline{D}f^{-1}\overline{Dh}=O,
$$
which imply that \be\label{eq-r1}
Df^{-1}=[Dh]^{-1}\left(I-\overline{\omega}_{f}\omega_{f}\right)^{-1}
\ee and
\be\label{eq-r2}\overline{D}f^{-1}=-[Dh]^{-1}\left(I-\overline{\omega}_{f}\omega_{f}\right)^{-1}
\overline{\omega_{f}}, \ee where
$f^{-1}(w)=(\sigma_{1}(w),\ldots,\sigma_{n}(w))'$ and
$\overline{D}f^{-1}(w)=\left(\frac{\partial \sigma_{j}}{\partial
\overline{w}_{k}}\right)_{n\times n}$ for $k,j\in\{1,\ldots,n\}$.
Then, by (\ref{eq-r1}) and (\ref{eq-r2}), we have

\beqq
\left\|DhDf^{-1}\frac{\xi_{0}}{\|\xi_{0}\|_{e}}\right\|_{e}+\left\|Dh\overline{D}f^{-1}\frac{\overline{\xi_{0}}}{\|\xi_{0}\|_{e}}\right\|_{e}&\leq&
\left\|\left(I-\overline{\omega}_{f}\omega_{f}\right)^{-1}\right\|\\
&&+\left\|\left(I-\overline{\omega}_{f}\omega_{f}\right)^{-1}
\overline{\omega}_{f}\right\|\\
&\leq&\frac{1}{1-\|\omega_{f}\|^{2}}+\frac{\|\omega_{f}\|}{1-\|\omega_{f}\|^{2}}\\
&=&\frac{1}{1-\|\omega_{f}\|}. \eeqq
 Consequently,

 \beqq
\mathscr{B}_{h}(0)&=&\int_{0}^{1}\left\|dh(\ell_{\xi_{1}}(t))\right\|_{e}\leq\int_{0}^{1}\left\|dh(\ell_{\xi_{0}}(t))\right\|_{e}\\
&=&\int_{0}^{1}\left\|\left(Dh(\ell_{\xi_{0}}(t))Df^{-1}(\xi_{0}t)\xi_{0}+
Dh(\ell_{\xi_{0}}(t))\overline{D}f^{-1}(\xi_{0}t)\overline{\xi_{0}}\right)dt\right\|_{e}\\
&\leq&\|\xi_{0}\|_{e}\int_{0}^{1}\bigg(\left\|Dh(\ell_{\xi_{0}}(t))Df^{-1}(\xi_{0}t)\frac{\xi_{0}}{\|\xi_{0}\|_{e}}\right\|_{e}\\
&&+
\left\|Dh(\ell_{\xi_{0}}(t))\overline{D}f^{-1}(\xi_{0}t)\frac{\overline{\xi_{0}}}{\|\xi_{0}\|_{e}}\right\|_{e}\bigg)dt\\
&\leq&\mathscr{B}_{f}(0)\int_{0}^{1}\frac{dt}{1-\|\omega_{f}(\ell_{\xi_{0}}(t))\|},
\eeqq which gives that

\be\label{At-1}
\frac{\mathscr{B}_{f}(0)}{\mathscr{B}_{h}(0)}\geq\frac{1}{\int_{0}^{1}\frac{dt}{1-\|\omega_{f}(\ell_{\xi_{0}}(t))\|}}\geq1-k.
\ee

$\mathbf{Step~ 3.}$ In this step,  we will give the lower bound of
$\mathscr{B}_{f}(z)/\mathscr{B}_{h}(z)$ for all $z\in \mathbf{B}$.
Since $ \mathbf{B}$ is homogeneous, we see that,
 for any fixed $\zeta\in\mathbf{B}$, there exists a
$\phi\in\mbox{Aut}(\mathbf{B})$ such that $\phi(0)=\zeta.$ For $z\in
\mathbf{B}$, let

\be\label{eq-j1} F(z)=f(\phi(z))-f(\phi(0))
 =H(z)+\overline{G(z)},
\ee
 where $H(z)=h(\phi(z))-h(\phi(0))$ and $G(z)=g(\phi(z))-g(\phi(0))$. Then $H(0)=G(0)=0$ and
$$\|\omega_{F}(z)\|=\|DG(z)[DH(z)]^{-1}\|=\|\omega_{f}(\phi(z))\|\leq k,
$$
which imply that $F\in\mathscr{PH}(k).$

By (\ref{At-1}) and (\ref{eq-j1}), we have \be\label{eq-l2}
\mathscr{B}_{F}(0)=\mathscr{B}_{f}(\zeta)
\geq(1-k)\mathscr{B}_{H}(0). \ee  Note that
$
 \mathscr{B}_{H}(0)=\mathscr{B}_{h}(\zeta),
$ which, together with (\ref{eq-l2}), implies that
\be\label{sharp-1}
\mathscr{B}_{f}(\zeta)\geq(1-k)\mathscr{B}_{h}(\zeta). \ee

Next we prove that (\ref{sharp-1}) is sharp for all
$\zeta\in\mathbf{B}$. Let
\[
R=\inf\{\|z\|_{e}:\, z\in\partial\mathbf{B}\}.
\]
Then there exists a point $z_0\in \partial\mathbf{B}$ such that $\| z_0\|_e=R$.
Let $U$ be a unitary transformation of $\C^n$ such that
$Uz_0$ is a pure imaginary vector in $\C^n$.
For $z\in\mathbf{B}$, let
$$f(z)=h(z)+\overline{g(z)}=Uz+k\overline{Uz},
$$
where $k\in[0,1)$ is a constant. Then $\mathscr{B}_{h}(0)=R$.
Also, $f$ is univalent on $\mathbf{B}$ and $\mathscr{B}_{f}(0)=R(1-k)$, which
gives that $\mathscr{B}_{f}(0)/\mathscr{B}_{h}(0)=1-k.$ In the
following, we will show that
$\mathscr{B}_{f}(\zeta)/\mathscr{B}_{h}(\zeta)=1-k$ for all
$\zeta\in\mathbf{B}$.

For any fixed $\zeta\in\mathbf{B}$, let \beqq
F(z)=f(\phi(z))-f(\phi(0))  =H(z)+\overline{G(z)}, \eeqq where
$\phi\in\mbox{Aut}(\mathbf{B})$ with $\phi(0)=\zeta$. Then
$$\mathscr{B}_{f}(\zeta)=\mathscr{B}_{F}(0)=(1-k)\mathscr{B}_{H}(0)=(1-k)\mathscr{B}_{h}(\zeta).
$$

$\mathbf{Step ~4.}$ Now we estimate the upper bound of
$\mathscr{B}_{f}(0)/\mathscr{B}_{h}(0).$
From (\ref{eq-bf0})
and the relation
$Dh(\ell_{\xi_{1}}(t))\ell_{\xi_{1}}'(t)=\xi_{1}$,
we have
\beq\label{eq-cpv-1}
\mathscr{B}_{f}(0)&=&\int_{0}^{1}\left\|df(\ell_{\xi_{0}}(t))\right\|_{e}\leq
\int_{0}^{1}\left\|df(\ell_{\xi_{1}}(t))\right\|_{e} \nonumber \\
&=&\int_{0}^{1}\left\|\big(Dh(\ell_{\xi_{1}}(t))\ell_{\xi_{1}}'(t)+
\overline{Dg(\ell_{\xi_{1}}(t))\ell_{\xi_{1}}'(t)}\big)dt\right\|_{e}
\nonumber \\
&=&\int_{0}^{1}\left\|\left(\xi_{1}+\overline{Dg(\ell_{\xi_{1}}(t))[Dh(\ell_{\xi_{1}}(t))]^{-1}Dh(\ell_{\xi_{1}}(t))
\ell_{\xi_{1}}'(t)}\right)dt\right\|_{e} \nonumber \\
&=&\int_{0}^{1}\left\|\left(\xi_{1}+\overline{\omega_{f}(\ell_{\xi_{1}}(t))\xi_{1}}\right)dt\right\|_{e} \nonumber \\
&\leq&\|\xi_{1}\|_{e}\int_{0}^{1}\left(1+\|\omega_{f}(\ell_{\xi_{1}}(t))\|\right)dt \nonumber \\
&=&\mathscr{B}_{h}(0)\left(1+\int_{0}^{1}\|\omega_{f}(\ell_{\xi_{1}}(t))\|dt\right).
\eeq Applying Theorem \ref{thm-h1} to $\omega_{f}/k$, we have
\be\label{eq-cpv-2}
\frac{\|\omega_{f}(z)\|}{k}\leq\frac{\|z\|_{X}+\frac{\|\omega_{f}(0)\|}{k}}{1+\frac{\|\omega_{f}(0)\|}{k}\|z\|_{X}}
\ee for $z\in\mathbf{B}$, where $X=\mathbb{C}^{n}$. Since $h^{-1}(w\|\xi_{1}\|_e)$ biholomorphically
maps $\B^n$ onto some subdomain of $\mathbf{B}$ with
$h^{-1}(0)=0$, by Theorem \ref{thm-h2}, we see that
$\|\ell_{\xi_{1}}(t)\|_{X}\leq t$. Consequently, by (\ref{eq-cpv-1})
and (\ref{eq-cpv-2}), we have

\beq\label{jjk-1} \mathscr{B}_{f}(0)&\leq&
\mathscr{B}_{h}(0)\left(1+k\int_{0}^{1}\frac{\|\omega_{f}(0)\|+k\|\ell_{\xi_{1}}(t)\|_{X}}{k+\|\omega_{f}(0)\|\,\|\ell_{\xi_{1}}(t)\|_{X}}dt\right)  \\
&\leq&
\mathscr{B}_{h}(0)\mu_{k}\left(\frac{\|\omega_{f}(0)\|}{k}\right)\nonumber.
\eeq For any fixed $k\in[0,1)$, it is not difficult to see that
$\mu_{k}(x)$ is an increasing function of $x \in (0,1].$

 $\mathbf{Step~ 5.}$ At last, we will establish the upper bound of
$\mathscr{B}_{f}(z)/\mathscr{B}_{h}(z)$ for all $z\in \mathbf{B}$.
For any fixed $\zeta\in\mathbf{B}$, let $\phi\in\mbox{Aut}(\mathbf{B})$
with $\phi(0)=\zeta.$ It follows from (\ref{eq-j1}) and
(\ref{jjk-1}) that


\beq\label{eq-j2} \mathscr{B}_{f}(\zeta)=\mathscr{B}_{F}(0)
\leq\mu_{k}\left(\frac{\|\omega_{f}(\phi(0))\|}{k}\right)\mathscr{B}_{H}(0),
\eeq where $F$ and $H$ are defined  in (\ref{eq-j1}). Note that
\be\label{eq-j3} \mathscr{B}_{H}(0)=\mathscr{B}_{h}(\zeta). \ee It
follows from (\ref{eq-j2}) and (\ref{eq-j3}) that

\be\label{R-1}
\mathscr{B}_{f}(\zeta)\leq\mu_{k}\left(\frac{\|\omega_{f}(\zeta)\|}{k}\right)\mathscr{B}_{h}(\zeta)\leq\mu_{k}(1)\mathscr{B}_{h}(\zeta)=(1+k)\mathscr{B}_{h}(\zeta).
\ee
Furthermore, the estimate in (\ref{R-1}) is asymptotically sharp
because
$$\lim_{k\rightarrow0^{+}}\mu_{k}(x)=1.
$$
The proof of the theorem is finished. \qed

\subsection*{ The proof of Theorem \ref{thm-3h}} The triangle inequality leads to
\be\label{PV- Eq9} \left\|\frac{f(z)-f(b)}{\|z\|_{X} -
\|b\|_{X}}\right\|_{Y} \geq \frac{1-\|f(z)\|_{Y}}{1-\|z\|_{X}}\ee
for $z\in B_X$.
 It
follows from Theorem \ref{thm-h2} that

\beqq\label{PV- Eq 10} \frac{1-\|f(z)\|_{Y}}{1-\|z\|_{X}} \geq
\frac{1+\|z\|_{X}}{1+\|Df(0)\| \|z\|_{X}},
 \eeqq
 which, together with (\ref{PV- Eq9}), implies that
\beq\label{PV- Eq 11}
 \liminf\limits_{r\rightarrow 1^{-}} \left\|\frac{f(rb)-f(b)}
 {\|rb\|_{X} - \|b\|_{X}}\right\|_{Y} &\geq& \liminf\limits_{r\rightarrow 1^{-}} \frac{1+\|rb\|_{X}}{1+\|Df(0)\| \|rb\|_{X}}\\ \nonumber
  &=& \frac{2}{1+\|Df(0)\|}.
  \eeq
Since the radial derivative
\[
Df(b)b=\lim_{r\rightarrow 1^{-}}\frac{f(rb)-f(b)}{r-1}
\]
exists,
the desired result follows from \eqref{PV- Eq 11}.

For a given value $\| Df(0)\|=r\in [0,1]$, the sharpness part
follows from the mapping
\[
f(z)=\frac{l_b(z)+r}{1+rl_b(z)}l_b(z)y,
\quad
z\in B_X,
\]
where $l_b\in T(b),$ and $y\in \partial B_Y$ is arbitrary. \qed

\begin{ThmL}\label{Zhu-1.1}{\rm  (\cite[Theorem 1.1]{Z18})}
Let $f$ be a holomorphic self-mapping of $\mathbb{U}$. If $f$ is holomorphic at $z=1$ with $f(1)=1$,
then  $$f'(1)\geq\frac{2|1-f(0)|^{2}}{1-|f(0)|^{2}+|f'(0)|}.$$ This estimate is sharp
with equality possible for each value of $f(0)$
and $|f'(0)|$
with $|f'(0)|\leq 1-|f(0)|^2$.
The extreme function is
$$f(z)=\frac{\gamma\mathcal{A}(z)+f(0)}{1+\gamma\overline{f(0)}\mathcal{A}(z)},$$
where $\gamma=(1-f(0))/(\overline{1-f(0)})$ and $\mathcal{A}(z)=z((1-|f(0)|^{2})z+|f'(0)|)/((1-|f(0)|^{2})+|f'(0)|z).$
\end{ThmL}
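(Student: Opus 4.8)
The plan is to reduce Theorem L to the fixed-origin boundary Schwarz lemma (Theorem C, Osserman's result) by composing $f$ with a disk automorphism, and then to translate the resulting estimate on the normalized map back into an estimate on $f$ itself. Write $a=f(0)\in\U$ and set
\[
\phi(z)=\frac{f(z)-a}{1-\overline{a}\,f(z)},\qquad z\in\U .
\]
Since $w\mapsto (w-a)/(1-\overline a w)$ is an automorphism of $\U$, the map $\phi$ is again a holomorphic self-mapping of $\U$ with $\phi(0)=0$. Because $f$ is holomorphic at $z=1$ and $1-\overline a\neq 0$ (as $|a|<1$), $\phi$ is holomorphic at $z=1$ as well, and $\phi(1)=(1-a)/(1-\overline a)$ has modulus one. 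Thus $\phi$ satisfies all the hypotheses of Theorem C at the boundary point $b=1$.

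First I would apply Theorem C to $\phi$, obtaining $|\phi'(1)|\ge 2/(1+|\phi'(0)|)$. The next step is a short computation: differentiating the quotient and using $f(1)=1$, the factor multiplying $f'$ simplifies to the constant $1-|a|^2$, giving
\[
\phi'(z)=\frac{(1-|a|^2)\,f'(z)}{(1-\overline a f(z))^2},
\]
so that $\phi'(0)=f'(0)/(1-|a|^2)$ and $\phi'(1)=(1-|a|^2)f'(1)/(1-\overline a)^2$. Substituting these into the inequality from Theorem C and simplifying with $|1-\overline a|=|1-a|$ yields
\[
\frac{(1-|a|^2)\,|f'(1)|}{|1-a|^2}\ \ge\ \frac{2(1-|a|^2)}{(1-|a|^2)+|f'(0)|},
\]
which rearranges to $|f'(1)|\ge 2|1-f(0)|^2/(1-|f(0)|^2+|f'(0)|)$.

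To finish the inequality I would invoke the standard boundary fixed-point theory: since $f$ is a holomorphic self-mapping of $\U$ with $f(1)=1$ and $f$ is differentiable there, the Julia--Wolff--Carath\'eodory theorem guarantees that the boundary derivative $f'(1)$ is a \emph{positive real number}. Hence $f'(1)=|f'(1)|$, and the lower bound just derived is exactly the asserted one. \textbf{The main obstacle} is precisely this realness/positivity of $f'(1)$: the M\"obius reduction and the arithmetic are routine, but one must legitimately pass from $|f'(1)|$ to $f'(1)$, and the cleanest route is to cite Julia--Wolff--Carath\'eodory (or to argue directly via the Herglotz representation of $(1+f)/(1-f)$ as a map into the right half-plane).

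For sharpness I would reverse the reduction. Let $\mathcal A$ be the Osserman extremal map that produces equality in Theorem C for the prescribed value $|\phi'(0)|=|f'(0)|/(1-|a|^2)$; this is exactly the function $\mathcal A(z)=z\big((1-|a|^2)z+|f'(0)|\big)/\big((1-|a|^2)+|f'(0)|z\big)$ appearing in the statement. Choosing the unimodular constant $\gamma=(1-a)/(\overline{1-a})$ so that $\gamma\,\mathcal A(1)=\phi(1)$, one sets $\phi=\gamma\,\mathcal A$ and recovers $f$ by inverting the automorphism, $f=(\phi+a)/(1+\overline a\,\phi)$, which is precisely $f(z)=(\gamma\,\mathcal A(z)+f(0))/(1+\gamma\,\overline{f(0)}\,\mathcal A(z))$. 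It then remains to check that for this $f$ every inequality used above becomes an equality (equality in Theorem C for $\mathcal A$, and equality in the quotient estimates), so that equality holds in the conclusion for each admissible pair $(f(0),|f'(0)|)$ with $|f'(0)|\le 1-|f(0)|^2$; this is a direct verification.
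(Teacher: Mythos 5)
The paper never proves Theorem L: it is imported verbatim from Zhu \cite[Theorem 1.1]{Z18} and used only as an ingredient in the proof of Theorem \ref{thm-Sy}, so there is no in-paper argument to compare yours against; what you have written is a genuine self-contained derivation, and it is correct. The M\"obius conjugation $\phi=(f-a)/(1-\overline{a}f)$ with $a=f(0)$ does reduce the statement to Osserman's Theorem C (note $|a|<1$ since $f$ maps $\U$ into $\U$, and $\phi$ is holomorphic at $1$ because $f(1)=1\neq 1/\overline{a}$); your derivative computations $\phi'(0)=f'(0)/(1-|a|^2)$ and $\phi'(1)=(1-|a|^2)f'(1)/(1-\overline{a})^2$ are right, and substituting them into $|\phi'(1)|\geq 2/(1+|\phi'(0)|)$ rearranges exactly to Zhu's lower bound for $|f'(1)|$. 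You also correctly isolated the one genuinely delicate point, namely that $f'(1)$ is a positive real number so that $|f'(1)|$ may be replaced by $f'(1)$; invoking the Julia--Wolff--Carath\'eodory theorem (or the Herglotz/Julia argument you mention) is the standard and legitimate way to settle this, and without it the argument would only bound the modulus. The sharpness verification is sound as well: your $\mathcal{A}$ is precisely Osserman's extremal $z(z+c)/(1+cz)$ with $c=|f'(0)|/(1-|f(0)|^2)\in[0,1]$ (this is exactly where the constraint $|f'(0)|\leq 1-|f(0)|^2$ enters), the unimodular factor $\gamma=(1-a)/(1-\overline{a})$ makes $\phi(1)$ match, and undoing the automorphism produces the stated extreme function, for which a direct computation gives $f'(1)=2|1-f(0)|^2/\big(1-|f(0)|^2+|f'(0)|\big)$ and $|f'(0)|$ equal to the prescribed value. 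In effect you have shown that Zhu's boundary Schwarz lemma is a corollary of Osserman's Theorem C by automorphism transport, which is a clean route and arguably more economical than a from-scratch proof.
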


\subsection*{ The proof of Theorem \ref{thm-Sy}}
For $\zeta\in\U$, let $$F(\zeta)=\frac{1}{2c(\B_Y)} h_0(f(\zeta \alpha),\beta),$$
where $f:\,G\to \B_Y$ is a holomorphic mapping with $f(\alpha)=\beta.$
By the assumption, and Loos \cite[Theorem 6.5]{L77}, we have $F(1)=1$.
Then $F$ is a holomorphic mapping of $\U$ into itself
such that $F$ is holomorphic at $\zeta=1$ and $F(1)=1$. Elementary computations lead to
$$F'(1)=\frac{1}{2c(\B_Y)} h_0(Df(\alpha)\alpha,\beta)~\mbox{and}~F'(0)=\frac{1}{2c(\B_Y)} h_0(Df(0)\alpha,\beta),$$
which, together with Theorem L, yield that
\beqq
\frac{1}{2c(\B_Y)}
h_0(Df(\alpha)\alpha, \beta)\geq
\frac{2\left| 1-\frac{1}{2c(\B_Y)}h_0(f(0),\beta)\right|^2}
{1-\left|\frac{1}{2c(\B_Y)}h_0(f(0),\beta)\right|^2+\| Df(0)\alpha\|_Y},
\eeqq
where we have used the inequality $|F'(0)|\leq \| Df(0)\alpha\|_Y$.
{Next, we prove the sharpness part.
Since $G=B_{X}$ is the unit ball of $X$,
for any holomorphic function $\varphi$ of $\U$ into itself and for any
$l_{\alpha}\in T(\alpha)$,
the mapping $f(z)=\varphi(l_{\alpha}(z))\beta$ is a holomorphic mapping of $B_{X}$ into $\B_Y$.
Then, it follows from Theorem L that
there exists a holomorphic mapping of $B_{X}$ into $\B_Y$  with $f(\alpha)=\beta$ such that $$ h_0(f(\zeta \alpha),\beta)=2c(\B_Y)\frac{a+\epsilon\tau(\zeta)}{1+\epsilon\overline{a}\tau(\zeta)},$$
where $$a=\frac{1}{2c(\B_Y)} h_0(f(0),\beta),~b=\frac{1}{2c(\B_Y)} h_0(Df(0)\alpha,\beta)$$
and $$
\epsilon=\frac{1-a}{1-\overline{a}},~
\tau(\zeta)=\zeta\frac{(1-|a|^{2})\zeta+|b|}{(1-|a|^{2})+|b|\zeta}.$$}
The proof of this theorem is finished.
\qed

\subsection*{Part II} Schwarz  type lemmas of pluriharmonic mappings and their applications.

\subsection*{ The proof of Theorem \ref{Har-1}} Let $z\in B_X\setminus\{ 0\}$ be fixed. Without loss of generality, we may assume that
$a=f(z)-\frac{1-\|z\|_X^{2}}{1+\|z\|_X^{2}}f(0)\neq 0$. Let $w=z/\|
z\|_X\in \partial B_X$ and let $u\in \partial B_{Y}$ be arbitrarily
fixed. Since, for each $l_u\in T(u)$,
\[
\varphi(\zeta)=l_u(f(\zeta w)) ,\quad \zeta \in \U,
\]
is a harmonic mapping in $\U$ such that $\varphi(\U)\subseteq \U$,
we obtain from \cite[Theorem 3.6.1]{Pav1} (or \cite[Theorem 1]{Het})  that, for all $\zeta\in \U$,
\[
\left|l_u(f(\zeta
w))-\frac{1-|\zeta|^{2}}{1+|\zeta|^{2}}l_u(f(0))\right|\leq
\frac{4}{\pi}{\arctan}| \zeta|.
\]
Especially, let $\zeta=\| z\|_X$. Since $\zeta w=z$, we have
\[
|l_u(a)|=\left|l_u\left(f(z)-\frac{1-\|z\|_X^{2}}{1+\|z\|_X^{2}}f(0)\right)\right|\leq
\frac{4}{\pi}{\arctan}\| z\|_X.
\]
Finally, if $u=a/\| a\|_{Y}$, then we obtain
\[
\left\|f(z)-\frac{1-\|z\|_X^{2}}{1+\|z\|_X^{2}}f(0)\right\|_{Y}\leq
\frac{4}{\pi}{\arctan}\| z\|_X.
\]

Next, we prove the sharpness part. For any fixed point $z_{0}\in B_{X}\backslash\{0\}$, let
 $l_{w_{0}}\in T(w_{0})$ be fixed, where $w_{0}=z_{0}/\|z_{0}\|_{X}$. It follows from
 \cite[Theorem 3.6.1]{Pav1} (or \cite[Theorem 1]{Het}) that there exists a harmonic mapping $\Phi$ of $\U$ into itself with $\Phi(0)=0$ such that
 \beqq \left|\Phi(l_{w_{0}}(z_{0}))\right|=\frac{4}{\pi}\arctan|l_{w_{0}}(z_{0})|.\eeqq
 For any fixed  $b\in\partial B_{Y}$, let
 $$f(z)=\Phi(l_{w_{0}}(z))b,~z\in B_{X}.$$
 Then
$f:\,B_X\to B_Y$ is harmonic and
\beqq
 \left\|f(z_{0})\right\|&=&
 \left|\Phi(l_{w_{0}}(z_{0}))\right|
 =\frac{4}{\pi}\arctan|l_{w_{0}}(z_{0})|\\
 &=&\frac{4}{\pi}\arctan\|z_{0}\|_{X},
 \eeqq
which completes the proof.
\qed

\begin{ThmM}\label{Zhu-1.7}{\rm  (\cite[Theorem 1.7]{Z18})}
Let $f$ be a harmonic mapping  of $\mathbb{U}$ into itself
with $f(0)=0$. Then, for $z\in\U$, we have
 $$|f(z)|\leq\frac{4}{\pi}\arctan\left(\frac{|z|+\frac{\pi}{4}(|f_{z}(0)|+
 |f_{\overline{z}}(0)|)}{1+\frac{\pi}{4}(|f_{z}(0)|+|f_{\overline{z}}(0)|)|z|}|z|\right).$$

\end{ThmM}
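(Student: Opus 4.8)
The plan is to reduce the complex-valued harmonic map $f$ to a real-valued harmonic function along a single radius, pass to an analytic self-map of $\U$ through a conformal map of a strip onto the disk, and then invoke Osserman's estimate (Theorem~B). Fix $z\in\U\setminus\{0\}$ and assume $f(z)\neq0$ (otherwise there is nothing to prove). First I would set $w=z/|z|$ and $u=f(z)/|f(z)|$ and consider
\[
F(\zeta)=\re\big(\overline{u}\,f(\zeta w)\big),\qquad \zeta\in\U ,
\]
which is a real-valued harmonic function with $F(\U)\subseteq(-1,1)$, $F(0)=0$ and $F(|z|)=\re(\overline{u}f(z))=|f(z)|$. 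Let $H$ be the analytic completion of $F$ normalized by $H(0)=0$, so that $\re H=F$ and $H$ maps $\U$ into the strip $S=\{\omega:|\re\omega|<1\}$.

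Next I would compose with $\Psi(\omega)=\tan\!\big(\tfrac{\pi}{4}\omega\big)$, which maps $S$ conformally onto $\U$: writing $\xi=\tfrac{\pi}{4}\re\omega$ and $\eta=\tfrac{\pi}{4}\im\omega$, one has $|\Psi(\omega)|^{2}=\frac{\sin^{2}\xi+\sinh^{2}\eta}{\cos^{2}\xi+\sinh^{2}\eta}$, which is $<1$ exactly when $|\re\omega|<1$. Hence $\Phi:=\Psi\circ H$ is an analytic self-map of $\U$ with $\Phi(0)=0$. Because $\sin^{2}\xi<\cos^{2}\xi$ for $|\xi|<\pi/4$, the same expression is increasing in $\sinh^{2}\eta$, so its value is least when $\eta=0$; this yields the key inequality
\[
\tan\!\Big(\tfrac{\pi}{4}|f(z)|\Big)=\tan\!\Big(\tfrac{\pi}{4}F(|z|)\Big)\le|\Phi(|z|)| .
\]

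To finish, I would control $|\Phi(|z|)|$ by Osserman's estimate. A Wirtinger computation along the radius gives $H'(0)=2F_{\zeta}(0)=w\big(\overline{u}\,f_{z}(0)+u\,\overline{f_{\overline{z}}(0)}\big)$, whence
\[
|\Phi'(0)|=\tfrac{\pi}{4}|H'(0)|\le\tfrac{\pi}{4}\big(|f_{z}(0)|+|f_{\overline{z}}(0)|\big)=:a .
\]
Applying Theorem~B to $\Phi$ and using that its right-hand side increases with $|\Phi'(0)|$, I get $|\Phi(|z|)|\le|z|\frac{a+|z|}{1+a|z|}$. Combining this with the displayed key inequality and then applying the increasing function $\tfrac{4}{\pi}\arctan$ produces exactly the claimed bound, since $a=\tfrac{\pi}{4}(|f_{z}(0)|+|f_{\overline{z}}(0)|)$.

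The main obstacle I anticipate is the passage in the second paragraph: one must check that $\tan(\tfrac{\pi}{4}\cdot)$ indeed carries $S$ onto $\U$ and, more delicately, that moving off the real axis can only increase the modulus, so that the real part $|f(z)|$ of $H(|z|)$ bounds $|\Phi(|z|)|$ from below. The identity $H'(0)=2F_{\zeta}(0)$ and the estimate $|H'(0)|\le|f_{z}(0)|+|f_{\overline{z}}(0)|$ must also be tracked with care, since it is precisely the replacement of $\overline{u}f_{z}(0)+u\,\overline{f_{\overline{z}}(0)}$ by the sum of moduli that introduces the quantity $\tfrac{\pi}{4}(|f_{z}(0)|+|f_{\overline{z}}(0)|)$. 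The remaining steps are a direct appeal to Theorem~B together with the monotonicity of $\arctan$.
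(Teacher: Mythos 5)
Your proof is correct. Note first that the paper does not actually prove Theorem M: it is quoted as \cite[Theorem 1.7]{Z18} and used as a black box in the proof of Theorem 3.2, so there is no internal proof to compare against; your argument supplies the proof the paper omits. Every step checks out: $F(\zeta)=\re\big(\overline{u}\,f(\zeta w)\big)$ is real harmonic with values in $(-1,1)$, $F(0)=0$, $F(|z|)=|f(z)|$; the analytic completion $H$ maps $\U$ into the strip $\{|\re\omega|<1\}$; the identity $|\tan(\xi+i\eta)|^{2}=\frac{\sin^{2}\xi+\sinh^{2}\eta}{\cos^{2}\xi+\sinh^{2}\eta}$ is increasing in $\sinh^{2}\eta$ when $\sin^{2}\xi<\cos^{2}\xi$ (derivative in $t=\sinh^{2}\eta$ equals $\frac{\cos^{2}\xi-\sin^{2}\xi}{(\cos^{2}\xi+t)^{2}}>0$), which legitimizes the key inequality $\tan\big(\tfrac{\pi}{4}|f(z)|\big)\le|\Phi(|z|)|$; writing $f=h+\overline{g}$ one confirms $H'(0)=2F_{\zeta}(0)=w\big(\overline{u}f_{z}(0)+u\,\overline{f_{\overline{z}}(0)}\big)$, hence $|\Phi'(0)|\le\tfrac{\pi}{4}\big(|f_{z}(0)|+|f_{\overline{z}}(0)|\big)=a$; and since the right-hand side of Osserman's estimate is increasing in $|\Phi'(0)|$ (as $\frac{\partial}{\partial x}\frac{x+|z|}{1+x|z|}=\frac{1-|z|^{2}}{(1+x|z|)^{2}}>0$), Theorem B gives $|\Phi(|z|)|\le|z|\frac{a+|z|}{1+a|z|}$, and applying the increasing function $\tfrac{4}{\pi}\arctan$ finishes. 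For what it is worth, this strip-map reduction combined with Osserman's lemma is essentially the mechanism of Zhu's original proof in \cite{Z18}, so your reconstruction is faithful to the source the paper cites.
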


\subsection*{ The proof of Theorem \ref{Har-2}} For any fixed $z\in B_X\setminus\{ 0\}$, let $w=z/\|
z\|_X\in \partial B_X$. Without loss of generality, we assume $f(z)\neq0.$
Let $u\in \partial B_{Y}$ be arbitrarily
fixed. Since, for each $l_u\in T(u)$, the function $\varphi$ defined by
\[
\varphi(\zeta)=l_u(f(\zeta w)) 
\]
is a harmonic mapping of $\U$ into itself
with $\varphi(0)=0$,
we obtain from Theorem M  that, for all $\zeta\in \U$,

\begin{equation}\label{eq-jh-1.1}
|l_u(f(\zeta w))|=|\varphi(\zeta)|\leq\frac{4}{\pi}\arctan\left(\frac{|\zeta|+
\frac{\pi}{4}(|\varphi_{\zeta}(0)|+|\varphi_{\overline{\zeta}}(0)|)}{1+\frac{\pi}{4}(|\varphi_{\zeta}(0)|+|\varphi_{\overline{\zeta}}(0)|)|\zeta|}|\zeta|\right).
\end{equation}
By the definition of $\Lambda_f(0;w)$, we have
\[
|\varphi_{\zeta}(0)|+|\varphi_{\overline{\zeta}}(0)|
\leq \Lambda_f(0;w),
\]
which, together with (\ref{eq-jh-1.1}), implies that

\beq\label{eq-jh-1.2}
|l_u(f(\zeta w))|\leq\frac{4}{\pi}\arctan\left(\frac{|\zeta|+
\frac{\pi}{4}\Lambda_{f}(0;w)}{1+\frac{\pi}{4}\Lambda_{f}(0;w)|\zeta|}|\zeta|\right).
\eeq
By letting $\zeta=\|z\|_{X}$ in (\ref{eq-jh-1.2}),  we have

$$
|l_u(f(z))|\leq\frac{4}{\pi}\arctan\left(\frac{\|z\|_{X}+
\frac{\pi}{4}\Lambda_{f}(0;w)}{1+\frac{\pi}{4}\Lambda_{f}(0;w)\|z\|_{X}}\|z\|_{X}\right).
$$
Finally, if  $u=f(z)/\|f(z)\|_{Y}$, then we get the desired result.
\qed

\subsection*{ The proof of Theorem \ref{thm-Har3a}}

It
follows from Theorem \ref{Har-1} that
\beqq\label{PV- Eq 1000} \frac{1-\|f(z)\|_{Y}}{1-\|z\|_{X}} \geq
\frac{1-\frac{4}{\pi}\arctan\| z\|_X}{1-\|z\|_X}-\frac{1+\| z\|_X}{1+\| z\|_X^2}\| f(0)\|_Y.
 \eeqq
 By using arguments similar to those in the proof of Theorem \ref{thm-3h},
 we obtain

\begin{align}\label{eq-3.3a}
 \| Df(b)b\|_Y&\geq \liminf\limits_{r\rightarrow 1^{-}}
 \left(\frac{1-\frac{4}{\pi}\arctan\| rb\|_X}{1-\|rb\|_X}-\frac{1+\| rb\|_X}{1+\| rb\|_X^2}\| f(0)\|_Y\right)\\ \nonumber
 &=
 \frac{2}{\pi}-\| f(0)\|_Y.
 \end{align}
Next, let $l_{f(b)}\in T(f(b))$ and 
 \[
 p(\zeta)=1-{\rm Re}\big(l_{f(b)}(f(\zeta b))\big),
 \quad
 \zeta \in \U.
 \]
 Then $p$ is a positive real valued harmonic function on $\U$
 with $p(1)=0$.
 By Harnack's inequality, we have
 \[
 \frac{1-r}{1+r}p(0)\leq p(\zeta),
 \quad
 r=|\zeta|<1.
 \]
 Therefore, we have
 \[
 \frac{1}{1+r}p(0)\leq \frac{p(r)-p(1)}{1-r},
 \quad
 0<r<1.
 \]
 Letting $r\to 1^-$, we have
\begin{equation}
 \label{eq-3.3b}
 \frac{1-\| f(0)\|_Y}{2}\leq
 \frac{1-{\rm Re}\big(l_{f(b)}(f(0))\big)}{2}\leq
  {{\rm Re}\big(l_{f(b)}(Df(b)b)\big)}
 \leq \| Df(b)b\|_Y.
 \end{equation}
{Then combining (\ref{eq-3.3a}) and (\ref{eq-3.3b})
 gives the desired result.}
 This completes the proof.
  \qed

\subsection*{ The proof of Theorem \ref{Schwarz-pluriharmonic-symmetric}}

By the definition of $c(\B_Y)$ and the assumption on $f$, {we see that}
$\varphi$ is a harmonic mapping of $\U$ into itself with $\varphi(0)=0$.
Also, $\varphi$ is differentiable at $\zeta=1$
and $\varphi(1)=1$.
Then, by \cite[Theorem 1.8]{Z18},
we have
\begin{eqnarray*}
{\rm Re} \big(
\varphi_{\zeta}(1)+\varphi_{\overline{\zeta}}(1)
\big)
&\geq&
\frac{4}{\pi}
\frac{1}
{1+\frac{\pi}{4}(|\varphi_{\zeta}(0)|+|\varphi_{\overline{\zeta}}(0)|)}
\\
&\geq&
\frac{4}{\pi}
\frac{1}
{1+\frac{\pi}{4}\Lambda_f(0;\alpha)},
\end{eqnarray*}
which implies that (\ref{eq-Schwarz-pluriharmonic-symmetric}).
The mapping $f(z)=\psi(l_{\alpha}(z))\beta$
gives the sharpness, where
\[
\psi(\zeta)=\frac{2}{\pi}\arctan\frac{2{\rm Re}(\zeta)}{1-|\zeta|^{2}},
\quad
\zeta\in \U.
\]
This completes the proof.
\qed

\subsection*{ The proof of Theorem \ref{Schwarz-norm}}
First, we consider the case $z_0=0$.
By {Proposition \ref{Po-1.0}},
it suffices to show that
$$
\left\{
\begin{array}{cl}
\| Df(0)\|
\leq \dfrac{4}{\pi} & \mbox{if $f(0)=0$};\\
\sup_{\| \beta\|_{X}=1}\left|l_{f(0)}(Df(0)\beta)\right|
\leq \dfrac{4}{\pi} & \mbox{if $f(0)\neq 0$}.
\end{array}
\right.
$$

(i) If $f(0)=0$, then let $F(\zeta)=l(f(\zeta \beta))$ for $\zeta
\in \U$, where $\beta\in X$ with $\| \beta\|_X=1$ and $l\in Y^*$
with $\| l\|_{Y^*}=1$ are arbitrarily fixed. Then $F:\,\U \to \U$ is
harmonic. By applying Theorem F to the harmonic mapping $F$, we have
$$|l(Df(0)\beta)|\leq \frac{4}{\pi}.$$ Since $\beta\in X$ with $\|
\beta\|_X=1$ and $l\in Y^*$ with $\| l\|_{Y^*}=1$ are arbitrary, we
obtain that $$ \| Df(0)\|\leq \frac{4}{\pi}.$$

(ii)
If $f(0)\neq 0$,
then let $F(\zeta)=l_{f(0)}(f(\zeta \beta))$ for $\zeta \in \U$,
where
$\beta\in X$ with $\| \beta\|_X=1$ is arbitrarily fixed.
Then $F:\,\U \to \U$ is harmonic.
By applying Theorem F to the harmonic mapping $F$, we have
$$|l_{f(0)}(Df(0)\beta)|\leq \frac{4}{\pi}.$$
Therefore, we have proved (\ref{P-Schwarz-Pick-norm}) in the case $z_0=0$.

Next, we consider the case $z_0\neq 0$.
Let $g_{z_0}\in \mbox{Aut}(\B_X)$ be the
M\"{o}bius transformation of $\B_X$ defined by
(\ref{Mobius}).
By {Proposition \ref{Po-1.0},}
we have
\[
|\nabla \| f\|_Y(z_0)|
\leq
|\nabla \| f\circ g_{z_0}\|_Y(0)|\cdot
\| Dg_{z_0}(0)^{-1}\|_X.
\]
Since $f\circ g_{z_0}$ satisfies the assumptions of the theorem for $z_0=0$,
by
applying  (\ref{P-Schwarz-Pick-norm}) in the case $z_0=0$ and using (\ref{Kaup-estimate}),
we obtain that
\[
|\nabla \| f\|_Y(z_0)|
\leq
\frac{4}{\pi}\frac{1}{1-\| z_0\|_X^2}.
\]

Finally, we will show that the estimate
(\ref{P-Schwarz-Pick-norm}) is sharp.
Let $z_0\in \B_X\setminus \{0\}$ be fixed and
let $w_0=z_0/\| z_0\|_X$.
It follows from
 Theorem F that there exists a harmonic mapping $\phi$ of $\U$ into itself such that
$\phi(\| z_0\|_X)\in \mathbb{R}\setminus\{ 0 \}$ and
$$
|\phi_{\zeta}(\| z_0\|_X)|+|\phi_{\overline{\zeta}}(\| z_0\|_X)|=\frac{4}{\pi}\frac{1}{1-\| z_0\|_X^2}.
$$
For any fixed $l_{w_0}\in T(w_0)$ and any fixed
$a\in \partial B_Y$,
let
\[
f(z)=\phi(l_{w_0}(z))a,
\quad z\in \B_X.
\]
Then $f$ is a pluriharmonic mapping from $\B_X$ into $B_Y$.
Moreover,
\begin{align*}
|\nabla \| f\|_Y(z_0)|&=
\sup_{\| \beta\|_X=1}|\phi_{\zeta}(\| z_0\|_X)l_{w_0}(\beta)+\phi_{\overline{\zeta}}(\| z_0\|_X)\overline{l_{w_0}(\beta)}|
\\
&=
|\phi_{\zeta}(\| z_0\|_X)|+|\phi_{\overline{\zeta}}(\| z_0\|_X)|
\\
&=\frac{4}{\pi}\frac{1}{1-\| z_0\|_X^2}.
\end{align*}
If $z_0=0$, then for arbitrary $w_0\in {\partial \B}_X$, by using the above argument, we have
$$|\nabla \| f\|_Y(0)|=\frac{4}{\pi}.$$
This completes the proof.
\qed

\subsection*{ The proof of Theorem \ref{Schwarz-Hilbert}}
First, we show that
\begin{equation}
\label{eq4.2}
\sum_{j=1}^m \left(\| Dh(0)\widetilde{w}_j\|_e^2+\| Dg(0)\widetilde{w}_j\|_e^2\right)
\leq
{1-\| f(0)\|^2_e}
\end{equation}
for all $w_j\in X_j$ with $\| w_j\|_{X_j}=1$ $(1\leq j\leq m)$.
Indeed, let $w_j\in X_j$ with $\| w_j\|_{X_j}=1$ $(1\leq j\leq m)$
be fixed and let $F(\zeta_1,\dots, \zeta_m)=f(\zeta_1 w_1,
\dots,\zeta_m w_m)$ for $\zeta=(\zeta_1,\dots,\zeta_m)\in\U^m$.
Applying Theorem K to $F$, we obtain (\ref{eq4.2}).

Next, let $z\in \B_X\setminus \{ 0\}$ and $w=(w_1, \dots,w_m)\in X$
with $\| w_j\|_{X_j}=1$ for $j=1,\dots, m$ be fixed.
Let $g_z$ be the M\"{o}bius transformation
defined by
(\ref{Mobius}).
Applying (\ref{eq4.2}) to the pluriharmonic mapping
$f\circ g_z=h\circ g_z+\overline{g\circ g_z}$
and the unit vector $[Dg_z(0)]^{-1}w/\| [Dg_z(0)]^{-1}w\|_X$,
we have
\[
\frac{\sum_{j=1}^m \left(\| Dh(z)\widetilde{w}_j\|_e^2+\| Dg(z)\widetilde{w}_j\|_e^2\right)}{\| [Dg_z(0)]^{-1}w\|_X^2}\leq 1-\|f(z)\|^2_e.
\]
Therefore, by using (\ref{Kaup-estimate}), we have
\[
\sum_{j=1}^m \left(\| Dh(z)\widetilde{w}_j\|_e^2+\| Dg(z)\widetilde{w}_j\|_e^2\right)
\leq
\| [Dg_z(0)]^{-1}w\|_X^2( 1-\|f(z)\|^2_e)
\leq
\frac{1-\| f(z)\|^2_e}{(1-\| z\|_X^2)^2}
\]
as desired.

Next, we prove the sharpness part.
Let $a=(a_1,\dots, a_m)\in \B_X\setminus \{ 0\}$ with
$\| a_1\|_{X_1}=\cdots =\| a_m\|_{X_m}$
be arbitrarily fixed.
For $z\in\B_X$, let $$f(z)=(f_{1}(z),f_{2}(z),\ldots,f_{n}(z))\in
\B^n,$$
where
\[
f_{1}(z)=\frac{-\|a_{1}\|_{X_1}+l_a(z)}{1-\|a_{1}\|_{X_1}l_a(z)},
\]
and $f_{j}(z)\equiv0$ for $j\in\{2,\ldots,n \}$.
Then
this mapping gives the sharpness at $z=a$.
This completes the proof.
\qed
\subsection*{ The proof of Theorem \ref{Schwarz-quasiregular-Hilbert}}
Let $w\in X$ with $\| w\|_X=1$ be fixed. Since
\[
\| Dg(z)w\|_{e}=\| \omega_f(z)Dh(z)w\|_{e}\leq k\| Dh(z)w\|_{e} ~\mbox{ for all $z\in\B_X$,}
\]
there exists a function $\eta_{w}(z)\in [0,k]$ such that
$\|Dg(z)w\|_{e}=\eta_{w}(z)\| Dh(z)w\|_{e}$. Then, by
(\ref{P-Schwarz-Pick-Hilbert}) with $m=1$, we have

\[
\| Dh(z)w\|_{e} \leq
\frac{1}{\sqrt{1+\eta_{w}^2(z)}}\frac{\sqrt{1-\|
f(z)\|^2_e}}{1-\| z\|_X^2}
\]
and
\[
\| Dg(z)w\|_{e} \leq
\frac{\eta_{w}(z)}{\sqrt{1+\eta_{w}^2(z)}}\frac{\sqrt{1-\|
f(z)\|^2_e}}{1-\| z\|_X^2},
\]
which, together with the monotonicity of function
$\chi(t)=(1+t)/\sqrt{1+t^{2}}$ for $t\in[0,1)$, give that
\begin{align*}
\| Dh(z)\|+\| Dg(z)\| &\leq
\frac{1+\eta_{w}(z)}{\sqrt{1+\eta_{w}^2(z)}}\frac{\sqrt{1-\|
f(z)\|^2_e}}{1-\| z\|_X^2}
\\
&\leq
\frac{1+k}{\sqrt{1+k^2}}\frac{\sqrt{1-\| f(z)\|^2_e}}{1-\| z\|_X^2}
\\
&=
\frac{2K}{\sqrt{2(K^2+1)}}\frac{\sqrt{1-\| f(z)\|^2_e}}{1-\| z\|_X^2}.
\end{align*}
This completes the proof.
\qed

\section{A concluding remark}\label{s5}

Let $\B_j$ be the unit ball of a complex Hilbert space $H_j$ for $j=1,2$, respectively.
Note that if $f$ is $C^1$ at $z_0\in \partial \B_1$ with values in $H_2$,
then the adjoint operator $Df(z_0)^*$ is defined
by
\[
{\rm Re} \left(\langle Df(z_0)^*w,z\rangle_{H_1}\right)={\rm Re} \left(\langle w, Df(z_0)z\rangle_{H_2}\right)
\quad
\mbox{for }
z\in H_1,\, w\in H_2,\]
where $\langle\cdot,\cdot\rangle_{H_j}$ is the inner product of $H_j$, $j=1,2$.
The following result was obtained in \cite[Proposition 1.8]{GHK-JAM}.

\begin{Pro}
\label{p.schwarz-ph-ball}
Let $\B_j$ be the unit ball of a complex Hilbert space $H_j$ for $j=1,2$, respectively.
Let
$f:\,\B_1\to \B_2$ be a pluriharmonic mapping. Assume that $f$ is of class $C^1$ at some  point $z_0\in {\partial \B}_1$
and $f(z_0)=w_0\in {\partial \B}_2$. Then
there exists a constant $\lambda\in {\mathbb R}$ such that $Df(z_0)^*w_0=\lambda z_0$. Moreover,
$\lambda\geq\frac{1-{\rm Re}\big(\langle f(0),w_0\rangle\big)}{2}>0$.
\end{Pro}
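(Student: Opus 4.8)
The plan is to reduce the whole statement to the scalar, real-valued pluriharmonic function
\[
p(z) = 1 - {\rm Re}\,\langle f(z), w_0\rangle_{H_2}, \qquad z\in \B_1 .
\]
First I would record its basic features. Since $w_0$ is fixed and $f$ is pluriharmonic, $p$ is real-valued and pluriharmonic on $\B_1$; because $\|f(z)\|_{H_2}<1$ and $\|w_0\|_{H_2}=1$, the Cauchy--Schwarz inequality gives ${\rm Re}\,\langle f(z),w_0\rangle < 1$, so $p>0$ on $\B_1$. The hypotheses $f(z_0)=w_0$ and $\|w_0\|_{H_2}=1$ give $p(z_0)=1-\|w_0\|_{H_2}^2=0$, and the $C^1$ assumption on $f$ at $z_0$ makes $p$ differentiable at $z_0$. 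Differentiating $p$ and using the defining relation of the adjoint, I would record the identity
\[
Dp(z_0)v = -\,{\rm Re}\,\langle Df(z_0)^* w_0, v\rangle_{H_1}, \qquad v\in H_1,
\]
so that the assertion $Df(z_0)^* w_0=\lambda z_0$ becomes exactly the statement that the real-linear functional $Dp(z_0)$ is proportional to $v\mapsto {\rm Re}\,\langle v, z_0\rangle_{H_1}$.

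The second step is the heart of the matter: a boundary-minimum (Hopf-type) argument. For any strictly inward direction $v$, meaning ${\rm Re}\,\langle v, z_0\rangle_{H_1} < 0$, the points $z_0+tv$ lie in $\B_1$ for small $t>0$ (expand $\|z_0+tv\|^2=1+2t\,{\rm Re}\,\langle v,z_0\rangle+t^2\|v\|^2$), whence $p(z_0+tv)>0=p(z_0)$ and therefore $Dp(z_0)v\ge 0$. By a limiting passage---replace a tangential direction $v$, with ${\rm Re}\,\langle v, z_0\rangle=0$, by $v-\varepsilon z_0$ and let $\varepsilon\to 0^+$---continuity of $Dp(z_0)$ forces $Dp(z_0)v\ge 0$ for every such $v$, and applying this to both $v$ and $-v$ gives $Dp(z_0)v=0$ on the real hyperplane $\{v:{\rm Re}\,\langle v,z_0\rangle=0\}$. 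A continuous real-linear functional vanishing there must be a real multiple of $v\mapsto{\rm Re}\,\langle v, z_0\rangle$; combined with the adjoint identity above, and testing against both $v$ and $iv$ to pin down the element of $H_1$, this yields $Df(z_0)^* w_0=\lambda z_0$. The sign $\lambda\ge 0$ follows because $Dp(z_0)v\ge 0$ on the inward directions.

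Finally, for the sharp lower bound, I would restrict $p$ to the complex line through $0$ and $z_0$. Since $p$ is pluriharmonic, $P(\zeta):=p(\zeta z_0)$ is a positive harmonic function on $\U$ with $P(1)=0$, and one checks $\lambda=-Dp(z_0)z_0=-P'(1^-)$. Exactly as in the proof of Theorem \ref{thm-Har3a}, Harnack's inequality gives $P(r)\ge \frac{1-r}{1+r}P(0)$, hence $\frac{P(r)}{1-r}\ge\frac{1}{1+r}P(0)$; letting $r\to 1^-$ produces
\[
\lambda \;\ge\; \tfrac12\,P(0) \;=\; \frac{1-{\rm Re}\,\langle f(0),w_0\rangle}{2} \;>\;0,
\]
the strict positivity coming from $\|f(0)\|_{H_2}<1$.

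The step I expect to be the main obstacle is the second one: justifying that the tangential part of $Dp(z_0)$ vanishes. The delicate point is that $p$ is known to be nonnegative only on the open-ball side of $z_0$ and is controlled only through its first-order expansion at $z_0$, so the conclusion that $Dp(z_0)$ is purely normal must be squeezed out of the one-sided inequalities $Dp(z_0)v\ge 0$ together with the continuity/limiting passage from strictly inward to tangential directions, rather than from any two-sided differentiation along the boundary sphere.
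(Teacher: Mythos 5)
Your proof is correct, but there is nothing in the paper to compare it with line by line: the paper does not prove Proposition \ref{p.schwarz-ph-ball} at all, it simply quotes it from \cite[Proposition 1.8]{GHK-JAM}. What you have produced is therefore a self-contained proof of the cited result, and both halves of it are sound. For the structural half ($Df(z_0)^*w_0=\lambda z_0$), your Hopf-type argument works: the inward inequality $Dp(z_0)v\ge 0$, the passage to tangential directions via $v-\varepsilon z_0$ (which needs only linearity of the Fr\'echet derivative, so the point you flag as delicate is in fact harmless), the linear-algebra fact that a functional vanishing on $\ker\bigl({\rm Re}\,\langle\cdot,z_0\rangle\bigr)$ must be a real multiple of it, and the recovery of the vector $Df(z_0)^*w_0$ from the values of ${\rm Re}\,\langle v,\cdot\rangle$ are all valid, and the sign $\lambda\ge 0$ comes out correctly from the inward inequality. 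For the quantitative half, your Harnack computation on $P(\zeta)=p(\zeta z_0)$ is literally the argument the paper uses in the proof of Theorem \ref{thm-Har3a}, where it is applied to $p(\zeta)=1-{\rm Re}\bigl(l_{f(b)}(f(\zeta b))\bigr)$ with a general Banach-space target; the identification $\lambda=\lim_{r\to 1^-}P(r)/(1-r)=-Dp(z_0)z_0$ is justified by the $C^1$ hypothesis at $z_0$. In effect your write-up makes Section \ref{s5} self-contained: the paper's concluding remark combines the cited proposition with exactly this Harnack step to obtain the improved bound $\lambda\ge\max\left\{\frac{2}{\pi}-\|f(0)\|_{H_2},\ \frac{1-{\rm Re}\langle f(0),w_0\rangle}{2}\right\}$, and your argument supplies the proof of the proposition that the paper delegates to the reference.
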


By using Proposition \ref{p.schwarz-ph-ball} and the arguments similar to those in the proof of Theorem  \ref{thm-Har3a},
we obtain a better estimate:
\[
\lambda\geq \max\left\{ \frac{2}{\pi}-\| f(0)\|_{H_2},\frac{1-{\rm Re}\big(\langle f(0),w_0\rangle\big)}{2}\right\}.
\]

\bigskip



\section{Acknowledgments}
 We are grateful to the referee for her/his useful comments and suggestions.
The research of the first author was partly supported by the National Science
Foundation of China (grant no. 12071116), the Hunan Provincial Natural Science Foundation of China (No. 2022JJ10001),
 the Key Projects of Hunan Provincial Department of Education (grant no. 21A0429);
 the Double First-Class University Project of Hunan Province
(Xiangjiaotong [2018]469),  the Science and Technology Plan Project of Hunan
Province (2016TP1020), and the Discipline Special Research Projects of Hengyang Normal University (XKZX21002);
 The second author (Hidetaka Hamada)
 was partially supported by JSPS KAKENHI Grant
Number JP19K03553.

\end{document}